\definecolor{hanblue}{rgb}{0.27, 0.42, 0.81}
\definecolor{red}{rgb}{1.0, 0.0, 0.0}
\DeclareMathOperator*{\argmax}{arg\,max}
\DeclareMathOperator*{\argmin}{arg\,min}
\newcommand{\domain}{\mathrm{dom}}
\newcommand{\weakstar}{\stackrel{*}{\rightharpoonup}}
\newcommand{\e}{\varepsilon}
\newcommand{\lsc}{lower semicontinuous}
\newcommand{\abs}[1]{|#1|}
\newcommand{\predual}[1]{{#1}_*}
\newcommand{\preadj}[1]{{#1}_*}
\DeclareMathOperator{\sign}{sign}
\renewcommand{\phi}{\varphi}
\let\sp\relax
\newcommand{\sp}[1]{\left\langle #1 \right\rangle}
\newcommand{\wsto}{\weakstar}
\newcommand{\defeq}{:=}
\newcommand{\res}{\mathop{\hbox{\vrule height 7pt width .5pt depth 0pt
\vrule height .5pt width 6pt depth 0pt}}\nolimits}
\newcommand{\R}{\mathbb{R}}
\newcommand{\Z}{\mathbb{Z}}
\newcommand{\N}{\mathbb{N}}
\newcommand{\M}{\mathcal{M}}
\newcommand{\Ext}{{\rm Ext}}
\newcommand{\Rea}{{\rm Re}}
\newcommand{\norm}[1]{\|#1\|}
\newcommand\restr[2]{{
  \left.\kern-\nulldelimiterspace 
  #1 
  \vphantom{\big|} 
  \right|_{#2} 
  }}
\newtheorem{thm}{Theorem}
\newtheorem{lemma}[thm]{Lemma}
\newtheorem{prop}[thm]{Proposition}
\newtheorem{cor}[thm]{Corollary}
\theoremstyle{definition}
\newtheorem{dfnz}[thm]{Definition}
\newtheorem{rem}[thm]{Remark}
\numberwithin{equation}{section}
\numberwithin{thm}{section}
\title{A sparse optimization approach to infinite infimal convolution regularization
}
\author{Kristian Bredies\footnote{Kristian Bredies, Institute of Mathematics and Scientific Computing,
    University of Graz, Heinrichstra\ss{}e 36, A-8010 Graz,
    Austria. Email: \texttt{kristian.bredies@uni-graz.at}} \and Marcello Carioni\footnote{Marcello Carioni, Department of Applied Mathematics, University of Twente, P.O. Box 217, 7500 AE Enschede, The Netherlands. Email: \texttt{m.c.carioni@utwente.nl}} \and Martin Holler \footnote{Martin Holler, Institute of Mathematics and Scientific Computing,  University of Graz, Heinrichstra\ss{}e 36, A-8010 Graz, Austria. Email: \texttt{martin.holler@uni-graz.at}
    } \and Yury Korolev\footnote{Yury Korolev, Department of Mathematical Sciences, University of Bath,  Bath BA2 7AY, UK. 
Email: \texttt{ymk30@bath.ac.uk}}  \and Carola-Bibiane Sch\"onlieb\footnote{Carola-Bibiane Sch\"onlieb. Centre for Mathematical Sciences, University of Cambridge, Wilberforce Road, CB3 0WA, Cambridge, UK.
Email: \texttt{cbs31@cam.ac.uk}}}
\date{}
\begin{document}
\maketitle

\begin{abstract}
\sloppy
In this paper we introduce the class of \emph{infinite infimal convolution functionals} and apply these functionals to the regularization of ill-posed inverse problems. The proposed regularization involves an infimal convolution of a continuously parametrized family of convex, positively one-homogeneous functionals defined on a common Banach space $X$. We show that, under mild assumptions, this functional admits an equivalent convex lifting in the space of measures with values in $X$. This reformulation allows us to prove well-posedness of a Tikhonov regularized inverse problem and opens the door to a sparse analysis of the solutions. In the case of finite-dimensional measurements we prove a \emph{representer} theorem, showing that there exists a solution of the inverse problem that is sparse, in the sense that it can be represented as a linear combination of the extremal points of the ball of the lifted infinite infimal convolution functional. Then, we design a generalized conditional gradient method for computing solutions of the inverse problem without relying on an a priori discretization of the parameter space and of the Banach space $X$. The iterates are constructed as linear combinations of the extremal points of the lifted infinite infimal convolution functional. We prove a sublinear rate of convergence for our algorithm and  apply it to denoising of signals and images using, as regularizer, infinite infimal convolutions of fractional-Laplacian-type operators with adaptive orders of smoothness and anisotropies.

 \vskip.2truecm \noindent 2020 Mathematics Subject Classification:
65J20, 
65K10, 
49J45,	
35R11.  

\vskip.2truecm \noindent Keywords: infimal convolution, sparse optimization, fractional Laplacian regularization, adaptive selection of regularizers, conditional gradient methods

\end{abstract}

{
  \hypersetup{linkcolor=black}
  \tableofcontents
}

\section{Introduction}
The infimal convolution of two convex functions $f_1,f_2 : X \rightarrow (-\infty,+\infty]$ is a classical convexity-preserving operation defined as follows
\begin{equation}
(f_1 \square f_2)(x) = \inf_{x_1 + x_2 = x} f_1(x_1) + f_2(x_2)\,.
\end{equation} 
Many optimization problems can be formulated using this notion: not only in economics, where infimal convolutions are  related to the so-called Pareto optimality \cite{hochman1969pareto, stiglitz1987pareto,warr1982pareto}, but also in other fields such as engineering,  physics, social sciences, and mechanics \cite{balle2015computation}.
Infimal convolutions of convex functionals have also been used to regularize inverse problems with applications to image and signal processing. 
One of the first successful approaches in this direction has been proposed in \cite{cl} in the context of image denoising. It is well-known that reconstructions obtained using variational models that penalize the $L^1$-norm of the gradient of the image, such as the ROF model \cite{rof}, achieve good performances to reconstruct sharp edges, but exhibit artifacts in the smooth regions of the image (the so-called staircasing effect). On the other hand, higher-order regularizers tend to smooth out the edges but perform better in the regions where a small gradient is expected. Following these considerations, the authors in \cite{cl} introduced a regularizer defined as the infimal convolution of a first and a second order functional as follows 
\begin{equation}
R(v)=  \inf_{v_1 + v_2 = v} \alpha \int_{\Omega} |\nabla v_1|\, dx + \beta \int_{\Omega} |\nabla (\nabla v_2)|\, dx\,,
\end{equation}
where $\alpha,\beta > 0$ are fixed parameters, with the goal of automatically selecting the regions of the image where it is convenient to apply a first order regularizer and the regions where the second order regularizer is more suitable.
In the following years, further regularizers defined as the infimal convolution of functionals were introduced, achieving striking results in solving inverse problems. 
One classical example is the so-called cartoon-texture decomposition~\cite{meyer2001oscillating, vese2003modeling, aujol2006structure}, where an infimal convolution of the total variation and Meyer's G-norm is used.
Combinations of higher-order functionals were explored improving on the work of \cite{cl}, see \cite{bkp, chan2010fourth, setzer2011infimal}, and spatio-temporal infimal convolutions were used for solving dynamic inverse problems \cite{hk,shsbs}. Other more recent examples can be found in \cite{burger2016infimal_finite_p, burger2015infimal_infty, gao2018infimal, burger2019total, Gao19infimal, Kutyniok13_mh, reggraph, carioni2022extremal}.

In this work we focus on a generalization of the infimal convolution functional \cite[Section 9,
Equation 9.49]{rock:1974} that is performed on an infinite (and even uncountable) collection of convex, non-negative, positively one-homogeneous, and coercive functionals $J(\cdot, s)$ defined on a common dual Banach space $X$ and parametrized by an index $s\in S$, where $S$ is a compact set in the Euclidean space. We additionally suppose that $J$ is lower semicontinuous in $X\times S$, where $X$ is endowed with the weak* topology. We remark that this generalization was first proposed by Rockafellar in \cite[Section 9,
Equation 9.49]{rock:1974} for functionals defined on reflexive Banach spaces.
Given a positive measure $\sigma \in \mathcal{M}_+(S)$ that models which convex functional is ``active" in the infimal convolution, following \cite[Section 9,
Equation 9.49]{rock:1974}, we define the operator $R : X \times \mathcal{M}_+(S) \rightarrow [0,\infty]$ as follows
\begin{align*}
R(v,\sigma) \defeq \inf\left\{  \int_S J(u(s),s)\, d\sigma(s): u \in L^1((S,\sigma);X) \text{ with }\int_S u(s)\, d\sigma(s) = v \right\},
\end{align*}
where $X$ is a given Banach space and $L^1((S,\sigma);X)$ is the Bochner space of $L^1$ functions defined on $(S,\sigma)$ with values in $X$. We call $R$ the \emph{infinite infimal convolution functional} to stress the fact that we are allowing for an infinite family of convex functionals continuously parametrized by $s \in S$ and we search for the optimal decomposition of $v$ with respect to $\sigma$, i.e., $\int_S u(s)\, d\sigma(s) = v$, among all $u\in L^1((S,\sigma);X)$. We remark that, since each $J(\cdot,s)$ is positively one-homogeneous, if $\sigma$ is chosen as a finite sum of  Dirac deltas concentrated in the points $s_1, \ldots, s_N \in S$, then $R(v,\sigma)$ is the standard infimal convolution of the functionals $J(\cdot,s_1) ,\ldots,  J(\cdot,s_N)$. 
Our aim is to use $R(v,\sigma)$ as a regularizer for inverse problems $Av = f$, where $A : X \rightarrow Y$ is a linear measurement operator and $f \in Y$ is the observation. In particular, we solve the Tikhonov regularized problem
\begin{equation}\label{eq:inverseproblemintro}
  \inf_{\substack{v \in X, \\ \sigma\in \mathcal{M}_+(S)}} \frac{1}{2}\|Av - f\|^2_Y + \alpha R(v,\sigma)\,,
\end{equation}
where $\alpha > 0$ is a positive parameter. Note that in \eqref{eq:inverseproblemintro} we are simultaneously optimizing over $v \in X$ and $\sigma\in \mathcal{M}_+(S)$. In this way, \eqref{eq:inverseproblemintro} is automatically selecting the best combination of functionals $J(\cdot, s)$ that optimally regularizes the reconstruction of a given measurement $f$. Our setting is presented in more detail in Section~\ref{sec:prelim}.

We start our analysis by proving well-posedness of \eqref{eq:inverseproblemintro}, which is done in Section~\ref{sec:convexlifting}. Compared to the  classical infimal convolution functional, this is not straightforward. Indeed, since the Bochner space $L^1((S,\sigma);X)$ does not admit a predual and the functionals $J(\cdot,s)$ are only assumed to be coercive, standard methods based on the Banach--Alaoglu theorem cannot be applied. To circumvent this obstacle we leverage the positive one-homogeneity of $J(\cdot,s)$ to construct a convex lifting of the infinite infimal convolution functional to the space of Radon measures with values in $X$, denoted by $\mathcal{M}(S;X)$. Formally, such convex lifting is obtained by setting $\mu = u \sigma \in \mathcal{M}(S;X)$ defined as the Bochner integration of $u \in L^1((S,\sigma);X)$ against the measure $\sigma \in \mathcal{M}_+(S)$. We define the convex lifting of the functional $F_\sigma(u) := \int_S J(u(s),s)\, d\sigma(s)$ as 
$\hat F : \mathcal{M}(S;X) \rightarrow [0,+\infty]$ defined by
\begin{equation}
\hat F(\mu)=  \int_S J\left(\frac{\mu}{|\mu|}(s),s\right)\, d|\mu|(s)\,,
\end{equation}
where $|\mu| \in \mathcal{M}_+(S)$ is the variation (measure) of $\mu \in \mathcal{M}(S;X)$ defined for every Borel set $E \subset S$ as 
\begin{equation*}
|\mu|(E) = \sup \left\{\sum_{i=1}^n  \|\mu(A_i)\|_X: \ n \in \N,\,  (A_i)_{i=1}^n \ \text{partition of } E\right\}
\end{equation*}
and $\frac{\mu}{|\mu|} \in L^1((S,\sigma);X)$ is the density of $\mu$ with respect to $|\mu|$ defined by the relation $\mu(E) = \int_E \frac{\mu}{|\mu|}(s) \, d|\mu|(s)$ for every Borel set $E \subset S$. 
We then rewrite the lifted inverse problem as
\begin{equation}\label{eq:liftingintro}
\inf_{\mu \in \mathcal{M}(S;X)} \frac{1}{2}\|A\mu(S) - f\|_Y^2 + \alpha  \hat F(\mu)\,.
\end{equation}
Relying on the positive one-homogeneity of $J(\cdot, s)$ we show that  \eqref{eq:liftingintro} and \eqref{eq:inverseproblemintro} are equivalent.
Moreover, after proving that $\hat F$ is coercive and weakly-* lower semicontinuous, we obtain the well-posedness of \eqref{eq:liftingintro} and \eqref{eq:inverseproblemintro} using standard arguments of calculus of variations.

The convex lifting of the infinite infimal convolution functional is not only useful to show well-posedness of the original problem, but it also sheds light on the sparsity properties of the regularized inverse problem. The key observation is that the extremal points of the unit ball of $\hat F$, denoted by 
\begin{equation*}
    B = \{\mu \in M(S;X) : \hat F(\mu)\leq 1\},
\end{equation*}
can be characterized as follows:
\begin{itemize}
\item If $0 \in \Ext(B_s)$ for each $s \in S$, then $\Ext(B) = \{v \delta_s : s \in S,\, v \in \Ext(B_s)\}$.  
    \item If there exists $s \in S$ such that $0 \notin \Ext(B_s)$, then $\Ext(B) = \{v \delta_s : s \in S,\, v \in \Ext(B_s) \smallsetminus\{0\}\}$,
\end{itemize}
where $B_s$ is the unit ball of $J(\cdot,s)$ and $\Ext(B_s)$ is the set of its extremal points.
As shown in \cite{bc} and \cite{chambolle}, the extremal points of the unit ball of the regularizer  provide information on the sparse structure of the minimizers. In particular, such extremal points are precisely the atoms used to represent sparse solutions in case of finite-dimensional data. Such results are called representer theorems and have recently become relevant in the context of inverse problems \cite{bc,chambolle}, optimization \cite{unsersplines}, and machine learning  \cite{unser2, bartolucci2021understanding}.
Relying on the result of \cite{bc} and  the characterization of the extremal points of the lifted regularizer,  we prove a representer theorem for inverse problems regularized with the infimal convolution functional in Section \ref{sec:representer}. In particular, we show that if the data is  $Q$-dimensional for $Q \in \N$, then there exists a solution $(v^*, \sigma^*)$ of \eqref{eq:inverseproblemintro} that can be written as  follows
\begin{equation}
v^* = \sum_{i = 1}^M \lambda_i v_i \,, \quad \sigma^* =  \sum_{s \in \{s_i : \,i =1,\ldots,M\}} \bigg\|\sum_{j \in \{1,\ldots, M :\, s_j = s\}}  \lambda_j v_j\bigg\|_X\delta_{s}\,, 
\end{equation}
where $\lambda_i >0$, $s_i \in S$, $v_i \in \Ext(B_{s_i})$, and $M \leq Q$. 

In Section \ref{sec:gcg} we leverage the knowledge of the atoms of the lifted regularization functional to design a generalized conditional gradient method for solving \eqref{eq:inverseproblemintro}. Generalized conditional gradient methods (GCG) are infinite-dimensional generalizations of the classical Frank--Wolfe algorithms \cite{frank1956algorithm}. In the context of inverse problems they have been used to design algorithms that do not require an a priori discretization of the domain $X$ (also called off-the-grid algorithms), since optimization steps are performed by inserting iteratively suitably chosen elements of $X$.  Some applications of generalized conditional gradient methods to  inverse problems can be found, for example, in \cite{bredieslorenz2008, bredies2009}. More recently, GCG methods have been studied as sparse optimization algorithms in infinite-dimensional spaces. Indeed, it has been shown that it is possible to construct the $k$th iterate of a GCG method as the following sparse element 
\begin{align*}
u^k = \sum_{i = 1}^{N_k} \lambda^k_i u^k_i,
\end{align*}
where $N_k \in \N$, $\lambda^k_i >0$ and $u^k_i$ are extremal points of the unit ball of the convex regularizer. 
Such observation has proved useful, for example, in solving super-resolution problems in the space of measures \cite{bp, pieper2021linear, boyd2017alternating, denoyelle2019sliding}, dynamic inverse problems with optimal transport regularization \cite{bredies2022generalized, duval2021dynamical, carioni2022extremal} and for the recovery of superpositions of trajectories \cite{laville2023off} and shapes \cite{de2023towards}. Moreover, they can be seen as a particular case of so-called exchange algorithms \cite{reemtsen1998numerical, flinth2021linear, eftekhari2018bridge, remes1934procede}.

In this paper we use the characterization of the extremal points of the unit ball of $\hat F$ given in Section \ref{sec:convexlifting} to design a GCG for the lifted  problem \eqref{eq:liftingintro}. We follow the general procedure described in \cite{bredies2021linear} and  adapt the results of \cite{bredies2021linear} to prove sublinear convergence of the GCG algorithm. Moreover,  we use the equivalence of  \eqref{eq:inverseproblemintro} and \eqref{eq:liftingintro} to rewrite the GCG algorithm for \eqref{eq:liftingintro} as a converging algorithm for the inverse problem regularized with the infinite infimal convolution functional \eqref{eq:inverseproblemintro} with the same rate of convergence.

Finally, in Section \ref{sec:numerics} we provide examples of signal and image denoising problems regularized with an infinite infimal convolution  functional, where we apply our generalized conditional gradient method. First we consider the task of denoising a signal using infimal convolutions of $L^2$-norms of fractional-Laplacian-type operators with adaptive order $s \in [0,1]$. We refer to \cite{antil2017spectral, bartels2020parameter} for examples of applications of fractional-Laplacian regularizers to image denoising. We construct the regularizer in such a way that the infimal convolution is automatically selecting the predominant frequencies of the data and is assigning a lower-order regularization to them.
As a second application we consider  denoising images using infinite infimal convolutions of $L^2$-norms of anisotropic fractional-Laplacian-type operators. Here the anisotropy direction is regulated by a parameter $s \in [0,\pi]$ providing the direction $(\cos(s), \sin(s)) \in \mathbb{S}^1$. The infinite infimal convolution regularizer learns the directions of higher oscillation of the noisy image $f$ and applies a lower regularization in such directions. In this way the natural anisotropy of the data is preserved and the noise that is supposed to be isotropic is removed. Directional regularizers that are aware of dominant anisotropy directions in the image were proposed in \cite{parisotto2020higher,parisotto2020higher2, kongskov2019directional, gao2018infimal, hk}. 
While the model in \cite{kongskov2019directional} is limited to one dominant direction that is fixed a priori by the user, \cite{gao2018infimal, hk} incorporate multiple directions of anisotropy in the same spirit as the present paper, in the sense that they use an infimal convolution of multiple functionals, each with one dominant direction of anisotropy. But also in those works the bias toward a finite, user-defined number of directions remains.
In \cite{parisotto2020higher,parisotto2020higher2} the anisotropy is encoded in a space-dependent tensor that is either estimated a priori or optimized in the algorithm by an alternating procedure. In contrast, our model is able to automatically select a distribution of directions and does not require a discretization of the parameter space; the set of directions is optimized together with the reconstruction through a provably convergent algorithm.

\medskip

\noindent \textbf{Outline of the main contributions:}

\begin{enumerate}
\item[i)] We introduce the \emph{infinite infimal convolution functional} as a regularizer to solve ill-posed inverse problems defined in a Banach space $X$.
\item[ii)] Under mild assumptions that are suitable for sparse regularization we prove well-posedness of the regularized inverse problem by constructing a convex lifting to the space of measures with values in $X$.
\item[iii)] We use the convex lifting to gain insights into the existence and the structure of sparse solutions by proving a representer theorem in case of finite-dimensional data.
\item[iv)] We propose a generalized conditional gradient method for solving the regularized problem, relying on the sparse structure of the iterates, and prove a sublinear rate of convergence.
\item[v)] We demonstrate the performance of our model and the generalized conditional gradient method on several denoising problems using the infimal convolution of $L^2$-norms of fractional-Laplacian-type operators.
\end{enumerate}

\section{The infinite infimal convolution functional}\label{sec:prelim}


In this section we define the regularizer studied in this paper and state the corresponding regularized inverse problem.

\subsection{Infinite infimal convolution} \label{sec:inf-inf-conv}
Let $X$ be a separable Banach space that admits a predual $\predual{X}$. 
Let $Y$ be a separable Hilbert space and  $A : X \rightarrow Y$ be a weak*-weak continuous operator (that is, $A$ is the adjoint of some linear bounded operator $\preadj{A} \colon Y \to X_*$).  
Let $d \in \N$ and $S \subset \R^d$ be a non-empty compact set. We consider a proper Borel-measurable functional $J: X \times S \rightarrow [0,+\infty]$ satisfying the following  assumptions:

\begin{enumerate}[label=(H\arabic*)]
\item \label{ass:convexlsc} $J(\cdot,s)$ is proper, convex, and positively one-homogeneous for every $s\in S$. Note that by positively one-homogeneous we mean that $J(\lambda v,s) = \lambda J(v,s)$ for every $\lambda \geq 0$, $v\in X$ and $s\in S$.
\item \label{ass:lsc} $J$ is sequentially lower semicontinuous in $X\times S$, where $X$ is endowed with the weak* topology.
\item \label{ass:coercivity} There exists a constant $C>0$ such that 
\begin{equation}
\|v\|_X \leq CJ(v,s)
\end{equation}
for all $v \in X$ and for every $s\in S$. 
\end{enumerate}

\begin{rem}\label{rem:rnholds}
Note that since $X$ is separable, its predual $\predual{X}$ is separable as well. Moreover, thanks to \cref{eq:DP}, $X$ has the Radon--Nikod\'ym-property, see Definition \ref{def:rd}. 
\end{rem}

Denote by $\mathcal{M}_+(S)$ the space of finite positive Radon measures on $S$. Given $\sigma \in \mathcal{M}_+(S)$ we define $F_\sigma : L^1((S,\sigma);X) \rightarrow [0,\infty]$ as follows
\begin{equation}\label{eq:integralfunctional}
F_\sigma(u) := \int_S J(u(s),s)\, d\sigma(s)\,,
\end{equation}
where $L^1((S,\sigma);X)$ is the standard Bochner space of $L^1$-functions with values in the Banach space $X$, cf. Section \ref{app:bochner}. 
Note that the map $s \mapsto J(u(s),s)$ is $\sigma$-measurable for every $\sigma \in \mathcal{M}_+(S)$ and every $u \in L^1((S,\sigma);X)$, due to the measurability of $J$ and the fact that $\sigma$ is a Radon measure.

We are now ready to introduce the infinite infimal convolution functional.
\begin{dfnz}[Infinite infimal convolution] \label{def:infiniteinfimal}
The functional $R: X \times \mathcal{M}_+(S) \rightarrow [0,\infty]$ given by 
\begin{equation}\label{eq:defff}
R(v,\sigma) \defeq \inf\left\{ F_\sigma (u): u \in L^1((S,\sigma);X) \text{ with }\int_S u(s)\, d\sigma(s) = v \right\}
\end{equation}
is called the infinite infimal convolution of the functionals $J(\cdot,s)$ over $s \in S$. 
The constraint $\int_S u(s)\, d\sigma(s) = v$ is defined as a Bochner integral. 
\end{dfnz}


\begin{rem}
One can see that if we choose $\sigma = \sum_{i=1}^N \delta_{s_i}$ and $S = \{s_1, \ldots, s_N\}$, where $N\in \N$ is fixed, the regularizer $R(v,\sigma)$ becomes the infimal convolution of the $N$ functionals $v \mapsto J(v,s_i)$ for $i=1,\ldots,N$. 
\end{rem}

\subsection{Tikhonov regularized inverse problems}\label{sec:inverse}
For a given noisy measurement $f \in Y$ we are interested in inverse problems of the form
\begin{align}\label{eq:inversep}
    Av = f
\end{align}
where $A : X \rightarrow Y$ is a linear continuous operator. 

In this paper we propose to solve \eqref{eq:inversep}
by setting up a Tikhonov problem regularized with the infinite infimal convolution functional. First, for a fixed $\sigma \in \mathcal{M}_+(S)$ we define the variational problem
\begin{equation}\label{eq:proe}
  \inf_{\substack{v \in X}} \frac{1}{2}\|Av - f\|^2_Y + \alpha R(v,\sigma)\,,
\end{equation}
where $\alpha > 0$ is a positive parameter.
Since we are interested in optimizing \eqref{eq:proe} also over $\sigma \in \mathcal{M}_+(S)$, in order to find the best combination of functionals $J(\cdot, s)$ for $s\in S$ that optimally regularizes the reconstruction of a given measurement $f$, we additional minimize \eqref{eq:proe} with respect to $\sigma \in \mathcal{M}_+(S)$ obtaining the following Tikhonov problem  
\begin{equation}\label{eq:inverseproblem}
  \inf_{\substack{v \in X, \\ \sigma\in \mathcal{M}_+(S)}} \frac{1}{2}\|Av - f\|^2_Y + \alpha R(v,\sigma)\,.
\end{equation}
Note that, using the definition of $R(v, \sigma)$, we can rewrite \eqref{eq:inverseproblem}  as
\begin{align}\label{eq:inverseproblem2}
\inf_{\substack{\sigma\in \mathcal{M}_+(S), \\
u \in L^1((S,\sigma);X)  }}   \frac{1}{2}\left\|A\int_S u(s)\, d\sigma(s) - f\right\|^2_Y + \alpha  \int_S J(u(s),s)\, d\sigma(s) \,.
\end{align}
Clearly, \eqref{eq:inverseproblem} and \eqref{eq:inverseproblem2} have the same infimum. Moreover, it is easy to check that if $(u,\sigma)$ is a minimizer for \eqref{eq:inverseproblem2}, then $(\int_S u(s)\, d\sigma(s), \sigma)$ is a minimizer for  \eqref{eq:inverseproblem}. Conversely, if $(v,\sigma)$ is a minimizer for \eqref{eq:inverseproblem} and the infimum in \eqref{eq:defff} is achieved at $u \in L^1((S,\sigma);X)$, then $(u,\sigma)$ is a minimizer for \eqref{eq:inverseproblem2}. It is our goal in the next section to show that these minima are indeed attained.

 The lack of a predual of the space $L^1((S,\mu);X)$ and the relatively weak coercivity assumption  \ref{ass:coercivity}  do not allow one to show existence of minimizers for the infinite infimal convolution $R(v,\sigma)$ as in \cite{rock:1974}. A possible solution is to assume that $\|v\|^p_X \leq CJ(v,s)$ for every $s \in S$, $v \in X$ and $p>1$. However, such additional assumption imposes a severe restriction on the class of regularizers one can consider, excluding, for example, sparsity-promoting regularizers. Our approach relies on the positive one-homogeneity of the functions $v \mapsto J(v,s)$ for each $s \in S$ (Assumption \ref{ass:convexlsc}) and a lifting procedure to a space of $X$-valued measures over $S$. This yields a convex lifting of~\eqref{eq:inverseproblem2} for which we can prove existence of minimizers. 
 We also show that this implies existence of minimizers in~\eqref{eq:inverseproblem} and~\eqref{eq:inverseproblem2}.

\section{Convex lifting to the space of $X$-valued Radon measures}\label{sec:convexlifting}

The goal of this section is to construct a convex lifting of the considered inverse problem to the space of $X$-valued Radon measures $\mathcal{M}(S;X)$. We collect necessary background on vector-valued measures in \cref{sec:preliminaries}.


\subsection{Convex lifting of the regularizer}
We first introduce a convex lifting of the map $(u,\sigma) \mapsto F_\sigma(u)$ defined in~\eqref{eq:integralfunctional} to $\mathcal{M}(S;X)$. 
We define $\hat F : \mathcal{M}(S;X) \rightarrow [0,+\infty]$ as follows
\begin{equation}
\hat F(\mu)=  \int_S J\left(\frac{\mu}{|\mu|}(s),s\right)\, d|\mu|(s)\,,
\end{equation}
where $\abs{\mu} \in \mathcal{M}_+(S)$ is the variation measure of $\mu$ as defined in \eqref{eq:totalvariation}. Note that Remark \ref{rem:rnholds} ensures the validity of the Radon--Nikod\'ym property for $X$. Therefore, for every $\mu \in \mathcal{M}(S;X)$ the density $\frac{\mu}{|\mu|}$ exists and belongs to $L^1((S,|\mu|); X)$, cf. Definition \ref{def:rd}.

\begin{prop}\label{prop:convexityintegral}
The functional $\hat F \colon \mathcal{M}(S;X) \to [0,+\infty]$ is convex, positively one-homo\-geneous and coercive, i.e., there exists $C>0$ such that 
\begin{equation}\label{eq:estcoercivity}
\hat F(\mu) \geq C \|\mu\|_\mathcal{M}\,,
\end{equation}
for all $\mu \in \mathcal{M}(S;X)$. Moreover, given $\mu,\nu \in  \mathcal{M}(S;X)$ such that $|\mu|$ and $|\nu|$ are mutually singular we have that 
\begin{equation}\label{eq:mutuallysingularthesis}
\hat F(\mu+\nu) =  \hat F(\mu) + \hat F(\nu)\,.
\end{equation}
\end{prop}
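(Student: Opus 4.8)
The plan is to verify the four asserted properties of $\hat F$ in turn, reducing each to a pointwise property of the convex integrand $J(\cdot,s)$ combined with standard facts about the Radon--Nikod\'ym density of vector measures. The conceptual engine throughout is \emph{positive one-homogeneity}: because each $J(\cdot,s)$ is positively one-homogeneous by \ref{ass:convexlsc}, the value $J\bigl(\frac{\mu}{|\mu|}(s),s\bigr)$ does not depend on the particular choice of the reference measure used to form the density, and this is what makes $\hat F$ well defined and well behaved.

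First I would establish \emph{positive one-homogeneity of $\hat F$}. For $\lambda\geq 0$ and $\mu\in\mathcal M(S;X)$ we have $|\lambda\mu|=\lambda|\mu|$ and $\frac{\lambda\mu}{|\lambda\mu|}=\frac{\mu}{|\mu|}$ ($|\mu|$-a.e.), so $\hat F(\lambda\mu)=\int_S J\bigl(\frac{\mu}{|\mu|}(s),s\bigr)\,d(\lambda|\mu|)(s)=\lambda\hat F(\mu)$. Next, \emph{coercivity} \eqref{eq:estcoercivity} follows directly from \ref{ass:coercivity}: since $\|\frac{\mu}{|\mu|}(s)\|_X\le C\,J\bigl(\frac{\mu}{|\mu|}(s),s\bigr)$ pointwise $|\mu|$-a.e., and since $\|\frac{\mu}{|\mu|}(s)\|_X=1$ for $|\mu|$-a.e.\ $s$ (a standard property of the polar density of a vector measure, see \cref{sec:preliminaries}), integrating against $|\mu|$ yields $\|\mu\|_{\mathcal M}=|\mu|(S)=\int_S\|\frac{\mu}{|\mu|}\|_X\,d|\mu|\le C\int_S J\bigl(\frac{\mu}{|\mu|},s\bigr)\,d|\mu|=C\hat F(\mu)$, which is \eqref{eq:estcoercivity} with the constant $C$ from \ref{ass:coercivity}.

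The \emph{additivity on mutually singular measures} \eqref{eq:mutuallysingularthesis} is where the density bookkeeping must be done carefully. When $|\mu|\perp|\nu|$, there is a Borel partition $S=P\sqcup N$ with $|\mu|(N)=|\nu|(P)=0$; then $|\mu+\nu|=|\mu|+|\nu|$, and one can choose the density $\frac{\mu+\nu}{|\mu+\nu|}$ to coincide with $\frac{\mu}{|\mu|}$ on $P$ and with $\frac{\nu}{|\nu|}$ on $N$, up to $(|\mu|+|\nu|)$-null sets. Splitting the defining integral over $P$ and $N$ and using that $J(\,\cdot\,,s)\ge 0$ is proper then gives $\hat F(\mu+\nu)=\int_P J(\frac{\mu}{|\mu|},s)\,d|\mu|+\int_N J(\frac{\nu}{|\nu|},s)\,d|\nu|=\hat F(\mu)+\hat F(\nu)$.

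Finally, \emph{convexity} is the main obstacle and the step I would treat most carefully. Given $\mu,\nu$ and $t\in[0,1]$, the difficulty is that $\mu$, $\nu$ and $t\mu+(1-t)\nu$ are differentiated against three different variation measures, so the integrand cannot be compared pointwise in a naive way. The clean route is to introduce a common dominating measure $\vartheta:=|\mu|+|\nu|\in\mathcal M_+(S)$ and write all three measures via their $\vartheta$-densities, say $\mu=f\vartheta$, $\nu=g\vartheta$ with $f,g\in L^1((S,\vartheta);X)$. Using positive one-homogeneity of each $J(\cdot,s)$, one shows the representation $\hat F(\mu)=\int_S J(f(s),s)\,d\vartheta(s)$ (and similarly for $\nu$ and for $t\mu+(1-t)\nu$), which removes the dependence on the individual variation measures: indeed $\mu=f\vartheta$ gives $|\mu|=\|f\|_X\,\vartheta$ and $\frac{\mu}{|\mu|}=f/\|f\|_X$ where $f\ne0$, so $J(\frac{\mu}{|\mu|},s)\,d|\mu|=J(f,s)\,d\vartheta$ by one-homogeneity, while on $\{f=0\}$ both sides vanish. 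With this common-density representation in hand, convexity is immediate from the pointwise convexity of $J(\cdot,s)$ in \ref{ass:convexlsc}: for $\vartheta$-a.e.\ $s$,
\begin{equation*}
	J\bigl(tf(s)+(1-t)g(s),s\bigr)\le t\,J(f(s),s)+(1-t)\,J(g(s),s),
\end{equation*}
and integrating against $\vartheta$ yields $\hat F(t\mu+(1-t)\nu)\le t\hat F(\mu)+(1-t)\hat F(\nu)$. The only genuinely delicate points are the measurability of $s\mapsto J(f(s),s)$ for an arbitrary density $f$ (guaranteed by Borel-measurability of $J$ together with the remark following \eqref{eq:integralfunctional}) and the verification that the common-density representation is independent of the dominating measure, both of which rely only on one-homogeneity and the Radon--Nikod\'ym property ensured by \cref{rem:rnholds}.
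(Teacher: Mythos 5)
Your proposal is correct and follows essentially the same route as the paper: homogeneity and coercivity from \cref{prop:totalvariationproduct} and \ref{ass:coercivity}, additivity on mutually singular measures via a Borel partition, and the key step of rewriting all measures against the common dominating measure $|\mu|+|\nu|$ using one-homogeneity of $J(\cdot,s)$ (the paper phrases this as subadditivity plus positive one-homogeneity rather than direct convexity, which is an equivalent organization of the same computation). The only cosmetic slip is in the coercivity step, where your derivation yields $\hat F(\mu)\geq \tfrac1C\|\mu\|_{\mathcal M}$, so the constant in \eqref{eq:estcoercivity} is $1/C$ with $C$ from \ref{ass:coercivity}, not $C$ itself; since the statement only asserts existence of some positive constant, this is harmless.
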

\begin{proof}
We start with the second statement. Let $\mu,\nu \in \M(S;X)$ be such that
$|\mu|$ and $|\nu|$ are mutually singular.
By mutual singularity, there exists a Borel set $A\subset S$ such that $|\nu|(A) = 0$ and $|\mu|(S \smallsetminus A) = 0$.
We now show that $\frac{\mu + \nu}{|\mu + \nu|} = \frac{\mu}{|\mu|}$ holds $|\mu|$-a.e. in $A$. For every $E \subset A$ Borel we observe that
\begin{equation}\label{eq:dec1}
\int_E \frac{\mu + \nu}{|\mu + \nu|}(s)\, d|\mu + \nu|(s)= \mu(E) + \nu(E) = \mu(E) = \int_E \frac{\mu}{|\mu|}(s)\, d|\mu|(s)
\end{equation}
and
\begin{align}\label{eq:dec2}
|\mu+\nu|(E)  = \sup \left\{\sum_{i=1}^n  \|\mu(A_i)\|_X:\ n \in \N, \  (A_i)_{i=1}^n \ \text{partition of } E\right\} = |\mu|(E)\,.
\end{align}
From \eqref{eq:dec1} and \eqref{eq:dec2} and Corollary 5 in \cite{DiestelUhl} we conclude that $\frac{\mu + \nu}{|\mu + \nu|} = \frac{\mu}{|\mu|}$ for $|\mu|$-a.e. in $A$. Similarly, it can be shown that $\frac{\mu + \nu}{|\mu + \nu|} = \frac{\nu}{|\nu|}$ for $|\nu|$-a.e. in $S \smallsetminus A$. So,
\begin{align*}
\hat F(\mu+\nu) & = \int_S J\left(\frac{\mu+\nu}{|\mu+\nu|}(s),s\right)\, d|\mu+\nu|(s) \\
& = \int_A J\left(\frac{\mu+\nu}{|\mu+\nu|}(s),s\right)\, d|\mu+\nu|(s) + \int_{S\smallsetminus A} J\left(\frac{\mu+\nu}{|\mu+\nu|}(s),s\right)\, d|\mu+\nu|(s)\\
& = \int_A J\left(\frac{\mu}{|\mu|}(s),s\right)\, d|\mu|(s) + \int_{S\smallsetminus A} J\left(\frac{\nu}{|\nu|}(s),s\right)\, d|\nu|(s)= \hat F(\mu) + \hat F(\nu)\,,
\end{align*}
proving \eqref{eq:mutuallysingularthesis}.

Now, it is clear that $\hat F$ is positively one-homogeneous. Indeed, by the definition of $\hat F$ one immediately sees that $\hat F(0) = 0$. Moreover, for every $\lambda >0$,  it holds that 
\begin{align*}
\lambda \int_A  \dfrac{\mu}{|\mu|}(s) d|\mu|(s) = \lambda \mu(A) = \int_A \dfrac{\lambda \mu}{|\lambda \mu|}(s) d|\lambda \mu|(s) =  \lambda \int_A \dfrac{\lambda \mu}{|\lambda \mu|}(s) d|\mu|(s)
\end{align*}
for every Borel set $A \subset S$, implying that 
$\dfrac{\lambda \mu}{|\lambda \mu|}(s) =    \dfrac{\mu}{|\mu|}(s)$ for $|\mu|-a.e.$ $s \in S$. Therefore, 
\begin{equation*}
\hat F(\lambda \mu) = \int_S J\left(\frac{\mu}{|\mu|}(s),s\right)\, d(|\lambda\mu|)(s) = \lambda \hat F( \mu)\,.
\end{equation*}
Since $\hat F$ is positively one-homogeneous, convexity is equivalent to subadditivity, i.e., it is enough to show that $\hat F(\mu + \nu) \leq \hat F(\mu) + \hat F(\nu)$ for all $\mu, \nu \in \mathcal{M}(S;X)$.
Let $\mu, \nu \in \mathcal{M}(S;X)$ be arbitrary. First note that 
\begin{align}\label{eq:de}
    \frac{\mu + \nu}{|\mu| + |\nu|}(s) = \frac{\mu + \nu}{|\mu + \nu|}(s)  \frac{|\mu + \nu|}{|\mu| + |\nu|}(s) \quad \text{for } \ (|\mu| + |\nu|)-a.e. \ s \in S \,,
\end{align}
where all densities are well-defined since $X$ has the Radon--Nikod\'ym property, cf.~Remark \ref{rem:rnholds}. Indeed, since $|\mu + \nu| = \frac{|\mu + \nu|}{|\mu| + |\nu|}(|\mu| + |\nu|)$ 
 by Radon--Nikod\'ym property, we have that \begin{align*}
 \int_E \frac{\mu + \nu}{|\mu| + |\nu|}(s) d (|\mu| + |\nu|)(s)& =  (\mu + \nu)(E) =  \int_E \frac{\mu + \nu}{|\mu +\nu|}(s) \, d(|\mu + \nu|)(s) \\& = \int_E \frac{\mu + \nu}{|\mu + \nu|}(s) \frac{|\mu + \nu|}{|\mu| + |\nu|}(s)  \, d(|\mu| + |\nu|)(s)
\end{align*}
for every $E \subset S$ Borel, showing \eqref{eq:de} thanks to \cite[Corollary 5]{DiestelUhl}. Similarly, it also holds that 
\begin{align}
    & \frac{\mu}{|\mu| + |\nu|}(s) = \frac{\mu}{|\mu|}(s)  \frac{|\mu|}{|\mu| + |\nu|}(s)\quad \text{for } \ (|\mu| + |\nu|)-a.e. \ s \in S \label{eq:over1}\,, \\ 
     & \frac{\nu}{|\mu| + |\nu|}(s) = \frac{\nu}{|\nu|}(s)  \frac{|\nu|}{|\mu| + |\nu|}(s) \quad \text{for } \ (|\mu| + |\nu|)-a.e. \ s \in S \label{eq:over2}\,, \\
  &  \frac{\mu + \nu}{|\mu| + |\nu|}(s) = \frac{\mu}{|\mu| + |\nu|}(s) + \frac{\nu}{|\mu| + |\nu|}(s) \quad \text{for } \ (|\mu| + |\nu|)-a.e. \ s \in S \label{eq:ads} \,.
\end{align}
Therefore, 
\begin{align*}
    \hat F & (\mu + \nu)  = \int_S J \left(\frac{\mu + \nu}{|\mu + \nu|}(s),s\right) \, d |\mu + \nu|(s) = \int_S J \left(\frac{\mu + \nu}{|\mu + \nu|}(s),s\right) \frac{|\mu + \nu|}{|\mu| + |\nu|}(s)\, d (|\mu| + |\nu|)(s) \\
    & = \int_S J \left(\frac{\mu + \nu}{|\mu| + |\nu|}(s),s\right) \, d (|\mu| + |\nu|)(s) \\
    & \leq \int_S J \left(\frac{\mu}{|\mu| + |\nu|}(s),s\right) \, d (|\mu| + |\nu|)(s) + \int_S J \left(\frac{\nu}{|\mu| + |\nu|}(s),s\right) \, d (|\mu| + |\nu|)(s)\\
    & = \int_S J \left(\frac{\mu}{|\mu|}(s),s\right) \frac{|\mu|}{|\mu| + |\nu|}(s)\, d (|\mu| + |\nu|)(s) + \int_S J \left(\frac{\nu}{|\nu|}(s),s\right) \frac{|\nu|}{|\mu| + |\nu|}(s) \, d (|\mu| + |\nu|)(s)\\
    & = \hat F(\mu) + \hat F(\nu)\,,
\end{align*}
where in the second equality we applied Radon--Nikod\'ym's theorem and in the third equality we used \eqref{eq:de} and the positive one-homogeneity of $J(\cdot,s)$, noting that $\frac{|\mu + \nu|}{|\mu| + |\nu|}$ is non-negative for $(|\mu| + |\nu|)$-a.e. $s\in S$; additionally, in the first inequality we used the subadditivity of $J(\cdot,s)$ and \eqref{eq:ads}, and in fourth equality we used \eqref{eq:over1}, \eqref{eq:over2} and the positive one-homogeneity of $J(\cdot,s)$, noting that both $\frac{|\mu|}{|\mu| + |\nu|}$ and $\frac{|\nu|}{|\mu| + |\nu|}$ are non-negative for $(|\mu| + |\nu|)$-a.e. $s\in S$; finally the last equality follows again from Radon--Nikod\'ym's theorem.

To show coercivity \eqref{eq:estcoercivity}, we use Assumption~\ref{ass:coercivity} and \cref{prop:totalvariationproduct}, and obtain 
\begin{equation*}
\hat F(\mu)=  \int_S J\left(\frac{\mu}{|\mu|}(s),s\right)\, d|\mu|(s) \geq \int_S \frac1C \left\|\frac{\mu}{|\mu|}(s)\right\|_{X} \, d|\mu|(s) = \frac1C \|\mu\|_{\mathcal{M}}\,,
\end{equation*}
where $C$ is the constant from Assumption~\ref{ass:coercivity}.
\end{proof}

Next we establish weak* lower semicontinuity of the functional $\hat F \colon \M(S;X) \to [0,+\infty]$. We remind the reader that the weak* topology on the space $\mathcal{M}(S;X)$ is understood in the sense of  duality with  $C(S;\predual{X})$, see \cref{thm:duality}.
The proof uses the same techniques as in the proof of the classical Reshetnyak semicontinuity theorem, see \cite{Reshetnyak} and \cite[Theorem 2.38]{afp}, adapted to measures with values in Banach spaces. The same result is proven in \cite[Theorem 8.2.2]{castaing} for  a separable and reflexive  space $X$. For completeness, we provide a complete proof.

\begin{thm}\label{thm:lsc}
The functional $\hat F$ is weakly-* sequentially \lsc{} in $\M(S;X)$.
\end{thm}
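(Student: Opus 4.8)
The plan is to show that $\hat F$ coincides with the pointwise supremum of a family of weak*-continuous linear functionals on $\M(S;X)$, from which sequential weak*-lower semicontinuity follows at once. Concretely, set
\[
\mathcal A = \{\psi \in C(S;\predual X) : \langle v, \psi(s)\rangle \le J(v,s) \ \text{for all } v \in X,\ s\in S\},
\]
and for $\psi \in \mathcal A$ let $L_\psi(\mu) = \int_S \langle \tfrac{\mu}{|\mu|}(s),\psi(s)\rangle\, d|\mu|(s)$, which is precisely the duality pairing of $\mu\in\M(S;X)$ with $\psi\in C(S;\predual X)$ and hence weak*-continuous by definition of the topology (\cref{thm:duality}). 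I would establish the representation $\hat F(\mu) = \sup_{\psi\in\mathcal A} L_\psi(\mu)$. Granting this, if $\mu_n \weakstar \mu$ then for every $\psi\in\mathcal A$ we have $L_\psi(\mu) = \lim_n L_\psi(\mu_n) \le \liminf_n \hat F(\mu_n)$, because $L_\psi\le\hat F$; taking the supremum over $\psi\in\mathcal A$ yields $\hat F(\mu)\le\liminf_n\hat F(\mu_n)$, which is the assertion.

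The inequality $\sup_{\psi}L_\psi\le\hat F$ is immediate from the defining constraint of $\mathcal A$. The substance is the reverse inequality, which I would obtain in two stages. First, a pointwise duality representation: for every $(v,s)$, $J(v,s)=\sup_{\psi\in\mathcal A}\langle v,\psi(s)\rangle$. Since each $J(\cdot,s)$ is proper, convex, weak*-lsc and positively one-homogeneous by \ref{ass:convexlsc}--\ref{ass:lsc}, and dominates $\tfrac1C\|\cdot\|_X$ by \ref{ass:coercivity}, it is the support function of the nonempty closed convex set $C_s = \{w^*\in\predual X : \langle v,w^*\rangle \le J(v,s)\ \forall v\in X\}$. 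The key point is to produce selections lying in $\mathcal A$ that are continuous in $s$. For this I would first show that $s\mapsto C_s$ is lower (inner) semicontinuous: if this failed at some $s_0$, one could find $w^*\in C_{s_0}$, a sequence $s_n\to s_0$, and $\delta>0$ with $\mathrm{dist}(w^*,C_{s_n})\ge\delta$; separating $w^*$ from the closed convex set $C_{s_n}$ in the Banach space $\predual X$ (whose dual is $X$) produces $v_n\in X$ with $\|v_n\|_X=1$ and $\langle v_n,w^*\rangle \ge J(v_n,s_n)+\delta$; extracting a weak*-limit $v_n\weakstar v$ via Banach--Alaoglu ($\predual X$ is separable, \cref{rem:rnholds}) and invoking the joint lower semicontinuity \ref{ass:lsc} gives $\langle v,w^*\rangle \ge J(v,s_0)+\delta$, contradicting $w^*\in C_{s_0}$. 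With inner semicontinuity in hand, Michael's continuous selection theorem, in the version prescribing the value at a single point, yields for any $s_0$ and any $w^*\in C_{s_0}$ a selection $\psi\in C(S;\predual X)$ with $\psi(s)\in C_s$ for all $s$ and $\psi(s_0)=w^*$; such $\psi$ lies in $\mathcal A$, giving the pointwise representation.

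Second, I would upgrade the pointwise representation to the integral one by localization. Using separability of $C(S;\predual X)$, choose a countable dense family $(\psi_k)\subset\mathcal A$, so that $g_k(s):=\langle\tfrac{\mu}{|\mu|}(s),\psi_k(s)\rangle$ satisfies $\sup_k g_k(s)=J(\tfrac{\mu}{|\mu|}(s),s)$ for $|\mu|$-a.e.\ $s$. By monotone convergence $\hat F(\mu)=\lim_N\int_S\max_{k\le N}g_k\,d|\mu|$, and the integral of the finite maximum equals $\sum_{i=1}^N\int_{E_i}g_i\,d|\mu|$ for a suitable Borel partition $(E_i)$ of $S$. Since $|\mu|$ is Radon, I would approximate the indicators $\mathbbm 1_{E_i}$ by a continuous partition of unity $(\theta_i)$ with $\int_S|\theta_i-\mathbbm 1_{E_i}|\,d|\mu|$ arbitrarily small; then $\psi:=\sum_{i=1}^N\theta_i\psi_i$ is continuous and, crucially, remains in $\mathcal A$, since $\psi(s)$ is a convex combination of points $\psi_i(s)\in C_s$ and $C_s$ is convex. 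As each $\psi_i$ is bounded and $\|\tfrac{\mu}{|\mu|}(s)\|_X=1$ for $|\mu|$-a.e.\ $s$, a direct estimate shows that $L_\psi(\mu)=\sum_i\int_S\theta_i g_i\,d|\mu|$ differs from $\sum_i\int_{E_i}g_i\,d|\mu|$ by an arbitrarily small amount, whence $\sup_{\psi\in\mathcal A}L_\psi(\mu)\ge\hat F(\mu)$.

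I expect the main obstacle to be the continuous-in-$s$ duality representation, namely the passage from the pointwise support-function description of $J(\cdot,s)$ to selections that are simultaneously continuous in $s$ and globally admissible. This is exactly where the joint lower semicontinuity \ref{ass:lsc}, the separability of $\predual X$, and the convexity of the fibers $C_s$ intervene; the inner-semicontinuity estimate via Hahn--Banach separation and the ensuing continuous selection are the delicate points, whereas the localization step and the final lower-semicontinuity conclusion are routine once the representation is available.
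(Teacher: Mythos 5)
Your argument is correct, but it takes a genuinely different route from the paper. The paper proves the statement by a Reshetnyak-type compactness argument: it lifts the sequence $\mu^n = u^n\abs{\mu^n}$ to scalar measures $\nu^n$ on the compact metrizable space $B_X\times S$, extracts a weak* limit $\nu$, disintegrates $\nu=\nu_s\otimes\lambda$ with $\lambda=\pi_\#\nu\ge\abs{\mu}$, identifies the barycenter of $\nu_s$ with $u(s)\tfrac{\abs{\mu}}{\lambda}(s)$, and concludes via the lower semicontinuity of $J$ on $B_X\times S$ together with Jensen's inequality. You instead exhibit $\hat F$ as a supremum of weak*-continuous linear functionals $\mu\mapsto\langle\psi,\mu\rangle$ over the class $\mathcal A$ of continuous subgradient fields, which is the classical duality proof of Reshetnyak-type semicontinuity; the substance is the attainment $\sup_{\psi\in\mathcal A}L_\psi=\hat F$, which you obtain from inner semicontinuity of $s\mapsto C_s$ (via Hahn--Banach separation and \ref{ass:lsc}), Michael's selection theorem, and a partition-of-unity localization. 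Each approach has its merits: yours yields lower semicontinuity for the full weak* topology (not only sequential) and avoids disintegration and Young-measure machinery, at the price of invoking a continuous selection theorem; the paper's route is measure-theoretically self-contained and sidesteps selections, but only delivers the sequential statement (which is all that is needed downstream). One point you should make explicit: the identity $J(\cdot,s)=\sup_{w^*\in C_s}\langle\cdot,w^*\rangle$ in the duality $(X,\predual X)$ requires $J(\cdot,s)$ to be lower semicontinuous for the weak* topology itself, whereas \ref{ass:lsc} only provides sequential weak* lower semicontinuity; this upgrade does hold here --- the epigraph is convex, the predual is separable so the weak* topology is metrizable on bounded sets, and the Krein--\v{S}mulian theorem then gives weak* closedness of the epigraph --- but it is not automatic and deserves a sentence. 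With that remark added, your proof is complete.
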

\begin{proof}
Let $\mu^n \wsto \mu$ in $\M(S;X)$, i.e., for any $\phi \in C(S;\predual{X})$,
\begin{equation*}
	\sp{\mu^n,\phi} \to \sp{\mu,\phi}.
\end{equation*}
By the Radon--Nikod\'ym theorem, there exists a sequence $\{u^n\}_n$ of Borel measurable functions such that $u^n \in L^1((S,|\mu^n|);X)$ and an element $u \in L^1((S,|\mu|);X)$  such that $\mu^n = u^n |\mu^n|$ for all $n$ and $\mu = u |\mu|$. By Proposition \ref{prop:totalvariationproduct} we have that $\|u^n(s)\|_X = 1$ for $|\mu^n|$-almost every $s \in S$.
Since weakly-* convergent sequences are bounded, we have $\sup_n \|\mu^n\|_{\mathcal{M}} <\infty$.

Denote by $B_X := \{u \in X : \|u\|_X \leq 1\}$ the unit ball in $X$. By the Banach--Alaoglu theorem, $B_X$ is weakly-* compact and thanks to the separability of $X$, the weak* topology is metrizable. Since $S$ is compact, the metric space $B_X \times S$ (where $B_X$ is endowed with the metric that metrizes its weak* topology) is compact and therefore separable. 
Let the sequence $\{\nu^n\}_{n}$ in $\mathcal{M}(B_X \times S)$ be defined as 
\begin{align}\label{eq:disin}
    \int_{B_X \times S} \varphi(x,s) \, d\nu^n(x,s) = \int_{S} \varphi(u^n(s),s)\, d|\mu^n|(s)\, \quad \forall \varphi \in C(B_X \times S)\,.
\end{align}
Since, using \eqref{eq:disin}, we have that $\sup_n \|\nu^n\|_\mathcal{M} = \sup_n \|\mu^n\|_{\mathcal{M}} <\infty$, the sequence $\{\nu^n\}_n$ is bounded and therefore contains a weakly-* converging subsequence (which we do not relabel) 
\begin{equation*}
    \nu^n \wsto \nu \in \M(B_X \times S)\,.
\end{equation*}
%
Let $\pi : X\times S \rightarrow S$ be the projection on $S$. Then we have $\pi_\# \nu^n  = |\mu^n|$ and $|\mu^n| \weakstar \pi_\# \nu$ by the continuity of the push-forward with respect to weak* convergence. Moreover, defining $\lambda := \pi_\# \nu$ and applying \cite[Proposition 1.62]{afp}, we get that $\lambda \geq |\mu|$. Applying the disintegration theorem for measures defined on metric spaces (see for example \cite[Theorem A.4]{superpositioninhomogeneous}),
there exists a Borel family of measures $\{\nu_s\}_{s \in S}$ in $\mathcal{M}(B_X)$ such that for every $f \in L^1(B_X \times S, \nu)$ 
\begin{align}\label{eq:disi}
\int_{B_X\times S} f(x,s)\, d\nu(x,s) = \int_{S} \int_{B_X} f(x,s)\, d\nu_s(x)\, d\lambda(s)\,.
\end{align}
We now analyze the barycenter  of $\nu_s$ in $B_X$ for every $s \in S$, that is \begin{align*}
\text{bar}(\nu_s) := \int_{B_X} x  \ \, d \nu_s(x)\,,
\end{align*}
where the previous integral is well-defined as a Bochner integral since the identity map is continuous on the separable metric space $B_X$ and hence, is in $L^1(B_X, \nu_s)$ for every $s\in S$.

Let us define $f:(x,s) \mapsto \langle \psi(s) , x\rangle \in C(B_X \times S)$, where $\psi \in C(S; \predual{X})$. 
Using \eqref{eq:disi} and  standard properties of the Bochner integral we get the following equality for the sequences $\nu^n \weakstar \nu$ and $\mu^n \weakstar \mu:$
\begin{align*}
\int_S \langle \psi(s),   \int_{B_X}  & x \, d\nu_s(x)\rangle\, d\lambda(s)   = \int_S \int_{B_X}  \langle \psi(s) , x\rangle \, d\nu_s(x)\, d\lambda(s)   =  \int_{ B_X \times S}\langle \psi(s) , x\rangle\, d\nu(x,s) \\
& = \lim_{n \rightarrow +\infty}   \int_{B_X \times S} \langle \psi(s) , x\rangle\, d\nu^n(x,s)  = \lim_{n \rightarrow +\infty}   \int_{S} \langle \psi(s) , u^n(s)\rangle\, d|\mu^n|(s)  \\
& = \lim_{n \rightarrow +\infty}  \int_{S}\psi(s)\, d\mu^n(s) = \int_{S}\psi(s)\, d\mu(s) = \int_{S}\psi(s) u(s) \frac{|\mu|}{\lambda}(s) \, d\lambda(s)\,.
\end{align*}
In particular, it holds that
\begin{align}\label{eq:bar}
\text{bar} (\nu_s) = u(s) \frac{|\mu|}{\lambda}(s) \qquad \text{for }\lambda - a.e. \ s\in S\,.
\end{align}
Therefore, by \cite[Proposition A.3]{superpositioninhomogeneous} (see also \cite[Section 5.1.7]{ags}) and the lower semicontinuity of $J$ on $B_X\times S$ (see Assumption \ref{ass:lsc}) we have
\begin{align*}
\liminf_{n \rightarrow +\infty} & \int_S J(u^n(s),s)\, d|\mu^n|(s)   = \liminf_{n \rightarrow +\infty} \int_{B_X \times S} J(x,s)\, d\nu^n(x,s) \geq \int_{B_X \times S} J(x,s)\, d\nu(x,s)\\
& = \int_{S} \int_{B_X} J(x,s)\, d\nu_s(x)\, d\lambda(s) \geq \int_{S} J\left(u(s)\frac{|\mu|}{\lambda}(s),s\right)\, d\lambda(s)= \int_{S} J(u(s),s)\, d|\mu|(s)\,,
\end{align*}
where we used \eqref{eq:disi}, \eqref{eq:bar}, Jensen's inequality (see Assumption \ref{ass:convexlsc}). The proof is complete.
\end{proof}

We conclude the section showing that $\hat F(\mu)$ coincides with $\int_S J(u(s),s)  \, d\sigma(s)$ when computed on measures $\mu \in \mathcal{M}(S; X)$ of the form $\mu(E) = \int_E u(s)\, d\sigma(s)$ for $E \subset S$ Borel, where $\sigma \in \mathcal{M}_+(S)$ and $u \in L^1((S,\sigma); X)$. From now on we will denote the measure $\mu\in \mathcal{M}(S; X)$ constructed as above by $\mu = u \sigma$. We refer the reader to Section \ref{sec:rrd} for more details.

\begin{lemma}\label{lem:ins}
Let $\sigma \in \mathcal{M}_+(S)$ and $u \in L^1((S,\sigma); X)$. Then, 
it holds that
\begin{equation}\label{eq:equivalence}
\hat F(u\sigma)  =  \int_S J(u(s),s)  \, d\sigma(s)\,.
\end{equation}  
\end{lemma}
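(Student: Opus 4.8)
The plan is to reduce \eqref{eq:equivalence} to a change-of-variables identity combined with the positive one-homogeneity of $J(\cdot,s)$. The starting point is to express the variation measure $|\mu|$ and the polar density $\frac{\mu}{|\mu|}$ of $\mu = u\sigma$ in terms of the given data $(u,\sigma)$. Since $\mu(E)=\int_E u(s)\,d\sigma(s)$, i.e.\ $\mu$ has density $u$ with respect to $\sigma$, \cref{prop:totalvariationproduct} provides the polar decomposition $|\mu| = \|u(\cdot)\|_X\,\sigma$, that is $d|\mu|(s) = \|u(s)\|_X\,d\sigma(s)$, together with $\frac{\mu}{|\mu|}(s) = \frac{u(s)}{\|u(s)\|_X}$ for $|\mu|$-a.e.\ $s\in S$ (equivalently, for $\sigma$-a.e.\ $s$ with $u(s)\neq 0$).

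First I would isolate the zero set $N := \{s\in S : u(s)=0\}$. On $N$ one has $|\mu|(N) = \int_N \|u(s)\|_X\,d\sigma(s) = 0$, so $N$ contributes nothing to $\hat F(\mu)=\int_S J(\frac{\mu}{|\mu|}(s),s)\,d|\mu|(s)$; on the other side $J(0,s)=0$ by positive one-homogeneity (taking $\lambda=0$ in \ref{ass:convexlsc}), hence $\int_N J(u(s),s)\,d\sigma(s)=0$ as well. Thus both sides of \eqref{eq:equivalence} vanish over $N$ and it suffices to compare the integrals over $S\smallsetminus N$.

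On $S\smallsetminus N$ I would substitute the polar decomposition into the definition of $\hat F$ and carry out the change of variables $d|\mu|(s)=\|u(s)\|_X\,d\sigma(s)$, obtaining
\begin{equation*}
	\hat F(\mu) = \int_{S\smallsetminus N} J\!\left(\frac{u(s)}{\|u(s)\|_X},s\right)\,d|\mu|(s) = \int_{S\smallsetminus N} J\!\left(\frac{u(s)}{\|u(s)\|_X},s\right)\|u(s)\|_X\,d\sigma(s)\,.
\end{equation*}
Applying the positive one-homogeneity of $J(\cdot,s)$ with nonnegative factor $\lambda=\|u(s)\|_X$ gives the pointwise identity $J\bigl(\tfrac{u(s)}{\|u(s)\|_X},s\bigr)\|u(s)\|_X = J(u(s),s)$, so the right-hand side equals $\int_{S\smallsetminus N} J(u(s),s)\,d\sigma(s)$. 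Combining this with the contribution over $N$ yields \eqref{eq:equivalence}.

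The argument is essentially routine: the only points requiring care are the identification of $|\mu|$ and $\frac{\mu}{|\mu|}$ in terms of $(u,\sigma)$, which is precisely the content of \cref{prop:totalvariationproduct}, and the consistent treatment of the zero set of $u$, where the integrating measure $|\mu|$ and the integrand $J(\cdot,s)$ both degenerate. No further delicate measure-theoretic estimate is needed beyond invoking the Radon--Nikod\'ym property (\cref{rem:rnholds}) to guarantee existence of the densities.
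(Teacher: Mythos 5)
Your proposal is correct and follows essentially the same route as the paper: invoke \cref{prop:totalvariationproduct} to identify $|\mu|=\|u(\cdot)\|_X\,\sigma$ and $\frac{\mu}{|\mu|}(s)=u(s)/\|u(s)\|_X$, change variables, and apply positive one-homogeneity. Your explicit treatment of the zero set of $u$ is a slightly more careful rendering of the paper's terser remark that $\|u\|_X\neq 0$ holds $|\mu|$-a.e., but the argument is the same.
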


\begin{proof}
Using  \cref{prop:totalvariationproduct}, the fact that $\|u\|_X \neq 0$ $|\mu|$-a.e., and the positive one-homogeneity of $J(\cdot,s)$  for  every $s\in S$ (see Assumption \ref{ass:convexlsc}), we get
\begin{align*}
\hat F(u\sigma) & =  \int_S J \left(\frac{u(s)}{\|u(s)\|_X},s\right) \|u(s)\|_X \, d\sigma(s) =  \int_S J(u(s),s)  \, d\sigma(s) 
\end{align*}
as we wanted to prove.
\end{proof}
%
%

\subsection{Convex lifting of the variational problem}

We are now ready to lift the problem \eqref{eq:inverseproblem} (and therefore also \eqref{eq:inverseproblem2}) to the space of $X$-valued measures $\mathcal{M}(S;X)$. Consider
\begin{equation}\label{eq:lifting}
\inf_{\mu \in \mathcal{M}(S;X)} \frac{1}{2}\|A\mu(S) - f\|_Y^2 + \alpha  \int_S J\left(\frac{\mu}{|\mu|}(s),s\right)\, d|\mu|(s)\,.
\end{equation}

The following result shows the equivalence between the Tikhonov regularized inverse problem \eqref{eq:inverseproblem2} and the lifted problem \eqref{eq:lifting}.
\begin{prop}\label{prop:equivalence}
Problems \eqref{eq:inverseproblem2} and \eqref{eq:lifting} have the same infimum. Moreover, if the pair $(\bar u,\bar \sigma)$ minimizes \eqref{eq:inverseproblem2}, then $\bar \mu := \bar u \bar \sigma$
is a minimizer for \eqref{eq:lifting}. Conversely, if  $\bar \mu$ is a minimizer for \eqref{eq:lifting}, then the pair $\left(\frac{\bar \mu}{|\bar \mu|}, |\bar \mu|\right)$ is a minimizer for \eqref{eq:inverseproblem2} and $(\bar \mu(S), |\bar \mu|)$ is a minimizer for \eqref{eq:inverseproblem}.
\end{prop}

\begin{proof}
Denote by $(\bar u,\bar \sigma)$ a pair minimizing \eqref{eq:inverseproblem2} and define $\bar \mu \in \mathcal{M}(S;X)$ as $\bar \mu = \bar u \bar \sigma$.  Given $\mu \in \mathcal{M}(S;X)$, since $X$ satisfies the Radon--Nikod\'ym property, we have that $\mu = \frac{\mu}{|\mu|} |\mu|$.
Therefore, using \cref{lem:ins} and the optimality of $(\bar u, \sigma)$ for \eqref{eq:inverseproblem2} we have
\begin{align*}
\frac{1}{2}\|A\mu(S) - f\|_Y^2 & + \alpha  \int_S J\left(\frac{\mu}{|\mu|}(s),s\right)\, d|\mu|(s)  \\
& = \frac{1}{2}\left\|A \int_S\frac{\mu}{|\mu|}(s)\, d|\mu|(s) - f\right\|_Y^2  + \alpha  \int_S J\left(\frac{\mu}{|\mu|}(s),s\right)\, d|\mu|(s) \\
 & \geq  \frac{1}{2}\left\|A\int_S\bar u(s)\, d\bar \sigma(s) - f\right\|_Y^2 + \alpha  \int_S J(\bar u(s),s)\, d\bar \sigma(s) \\
&  =  \frac{1}{2}\|A\bar \mu(S) - f\|_Y^2 + \alpha  \int_S J\left(\frac{\bar \mu}{|\bar \mu|}(s),s\right)\, d|\bar \mu|(s)  \,,
\end{align*}
implying that $\bar \mu =  \bar u \bar \sigma$ is a minimizer of \eqref{eq:lifting}.

Conversely, let  $\bar \mu$ be a minimizer for \eqref{eq:lifting}. Consider  $\sigma \in \mathcal{M}_+(S)$,  $u \in L^1((S,\sigma); X)$, and define the measure $\mu \in \mathcal{M}(S;X)$ as $\mu = u \sigma$. Setting
$(\bar u, \bar \sigma) = \left(\frac{\bar \mu}{|\bar \mu|} , |\bar \mu|\right)$ we have $\bar \mu = \bar u \bar \sigma$, and using \cref{lem:ins} and the optimality of $\bar \mu$ for \eqref{eq:lifting} we deduce that
%
\begin{align*}
 \frac{1}{2}\left\|A\int_S u(s)\, d\sigma(s) - f\right\|^2  + \alpha  \int_S J(u(s),s)\, d\sigma(s)  & = \frac{1}{2}\|A\mu(S) - f\|_Y^2  + \alpha  \int_S J\left(\frac{ \mu}{|\mu|}(s),s\right)\, d| \mu|(s) \\ 
&\geq \frac{1}{2}\|A\bar \mu(S) - f\|_Y^2  + \alpha  \int_S J\left(\frac{\bar \mu}{|\bar \mu|}(s),s\right)\, d| \bar \mu|(s) \\
& = \frac{1}{2}\left\|A\int_S \bar u(s)\, d\bar \sigma(s) - f\right\|_Y^2  + \alpha  \int_S J(\bar u(s),s)\, d \bar \sigma(s)\,,
\end{align*}
implying that $\left(\frac{\bar \mu}{|\bar \mu|}, |\bar \mu|\right)$ is a minimizer for \eqref{eq:inverseproblem2} and therefore the pair $(\bar \mu(S), |\bar \mu|)$ is a minimizer for \eqref{eq:inverseproblem}.

Finally, we show that \eqref{eq:inverseproblem2} and \eqref{eq:lifting} have the same infimum. Given any $\mu \in \mathcal{M}(S;X)$ define 
 $\tilde \sigma := |\mu| \in \mathcal{M}_+(S)$ and $\tilde u := \frac{\mu}{|\mu|}\in L^1((S,\tilde \sigma); X)$. Then, reasoning as above, we have that
\begin{align*}
   \inf_{\substack{\sigma\in \mathcal{M}_+(S),\\
   u \in L^1((S,\sigma);X)}}  & \frac{1}{2}\left\|A\int_S u(s)\, d\sigma(s) - f\right\|^2_Y  + \alpha  \int_S J(u(s),s)\, d\sigma(s) \\
   & \leq \frac{1}{2}\left\|A\int_S \tilde u(s)\, d\tilde \sigma(s) - f\right\|^2 + \alpha \int_S J(\tilde u(s),s)\, d\tilde \sigma(s) \\
   &= \frac{1}{2}\|A\mu(S) - f\|_Y^2  + \alpha  \int_S J\left(\frac{ \mu}{|\mu|}(s),s\right)\, d| \mu|(s)\,,
\end{align*}
implying that the infimum of \eqref{eq:inverseproblem2} is less or equal than the infimum of \eqref{eq:lifting}. Similarly, given any $\sigma \in \mathcal{M}_+(S)$ and $u \in L^1((S,\sigma); X)$, and defining $\tilde \mu := u \sigma \in \mathcal{M}(S;X)$, it holds  
\begin{align*}
  \inf_{\mu \in \mathcal{M}(S;X)} \frac{1}{2}\|A\mu(S) - f\|_Y^2 & + \alpha  \int_S J\left(\frac{\mu}{|\mu|},s\right)\, d|\mu|(s) \leq \frac{1}{2}\|A\tilde \mu(S) - f\|_Y^2  + \alpha  \int_S J\left(\frac{ \tilde \mu}{|\tilde \mu|}(s),s\right)\, d| \tilde \mu|(s) \\
   & = \frac{1}{2}\left\|A \int_S u(s)\, d\sigma(s)  - f\right\|_Y^2  + \alpha  \int_S J\left(u(s),s\right)\, d\sigma(s)\,,
\end{align*}
implying that the infimum of \eqref{eq:lifting} is less or equal than the infimum of \eqref{eq:inverseproblem2} and thus concluding the proof.
\end{proof}
\begin{rem}
We recall that Problems \eqref{eq:inverseproblem} and \eqref{eq:inverseproblem2} are equivalent as discussed in Section \ref{sec:inverse}. Therefore, Proposition \ref{prop:equivalence} implies that \eqref{eq:inverseproblem}, \eqref{eq:inverseproblem2} and \eqref{eq:lifting} have the same infimum and, given a minimizer of one of these problems, one can construct explicitly a minimizer of the others. In the next section we make sure that such minimizers actually exist.
\end{rem}

%

\subsection{Existence of minimizers}

 We are now ready to show that \eqref{eq:inverseproblem} and  \eqref{eq:inverseproblem2} admit a minimizer. First we show that the lifted problem \eqref{eq:lifting} admits a solution and then thanks to \cref{prop:equivalence} we immediately infer that \eqref{eq:inverseproblem2} and \eqref{eq:inverseproblem} admit minimizers as well. 
 
 \begin{prop}\label{prop:existencelifting}
 There exists $\mu \in \mathcal{M}(S;X)$ that minimizes the lifted problem \eqref{eq:lifting}.
 \end{prop}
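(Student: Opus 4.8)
The plan is to prove existence of a minimizer for the lifted problem \eqref{eq:lifting} via the direct method of the calculus of variations, exploiting the three key properties already established for the lifted functional $\hat F$: coercivity (Proposition \ref{prop:convexityintegral}), weak-* sequential lower semicontinuity (Theorem \ref{thm:lsc}), and the weak*-weak continuity of the linear operator $A$. First I would denote by $G(\mu) := \frac12 \|A\mu(S) - f\|_Y^2 + \alpha \hat F(\mu)$ the objective in \eqref{eq:lifting}. Since $G \geq 0$ and $G(0) = \frac12\|f\|_Y^2 < +\infty$, the infimum $m := \inf_{\mu} G(\mu)$ is finite and nonnegative, so I may select a minimizing sequence $\{\mu^n\}_n \subset \mathcal{M}(S;X)$ with $G(\mu^n) \to m$.

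Next I would extract the boundedness needed to apply a compactness argument. Since $G(\mu^n)$ is bounded (being a convergent sequence), in particular $\alpha \hat F(\mu^n)$ is bounded; the coercivity estimate \eqref{eq:estcoercivity}, namely $\hat F(\mu) \geq C\|\mu\|_{\mathcal{M}}$, then yields $\sup_n \|\mu^n\|_{\mathcal{M}} < +\infty$. By the duality stated in \cref{thm:duality}, $\mathcal{M}(S;X)$ is the dual of the separable space $C(S;\predual{X})$ (here separability of $\predual{X}$ from Remark \ref{rem:rnholds} is used), so the Banach--Alaoglu theorem combined with metrizability of bounded sets in the weak* topology gives a subsequence (not relabeled) and a limit $\bar\mu \in \mathcal{M}(S;X)$ with $\mu^n \weakstar \bar\mu$.

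Finally I would pass to the limit in each term. The evaluation functional $\mu \mapsto \mu(S)$ is weak*-continuous, since $\mu(S) = \sp{\mu, \phi}$ for the constant test function $\phi \equiv w_* \in C(S;\predual{X})$ paired appropriately, so $\mu^n(S) \weakstar \bar\mu(S)$ weakly in $X$; combined with the weak*-weak continuity of $A$ this gives $A\mu^n(S) \rightharpoonup A\bar\mu(S)$ weakly in $Y$, and by weak lower semicontinuity of the Hilbert-space norm, $\|A\bar\mu(S) - f\|_Y^2 \leq \liminf_n \|A\mu^n(S) - f\|_Y^2$. For the regularization term I invoke Theorem \ref{thm:lsc} directly: $\hat F(\bar\mu) \leq \liminf_n \hat F(\mu^n)$. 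Adding these two lower-semicontinuity estimates yields $G(\bar\mu) \leq \liminf_n G(\mu^n) = m$, and since $m$ is the infimum we conclude $G(\bar\mu) = m$, so $\bar\mu$ is a minimizer.

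I do not anticipate a serious obstacle here, as all the heavy machinery has been front-loaded into Proposition \ref{prop:convexityintegral} and Theorem \ref{thm:lsc}. The only point requiring slight care is the weak*-continuity of the total-mass map $\mu \mapsto \mu(S)$ together with the interaction with $A$: one must check that testing against the constant $\predual{X}$-valued function reproduces the pairing $\sp{\preadj{A}y, \mu(S)}$ for arbitrary $y \in Y$, so that weak* convergence of $\mu^n$ transfers to weak convergence of $A\mu^n(S)$ in $Y$. This is where the assumption that $A$ is weak*-weak continuous (equivalently, the adjoint of a bounded operator $\preadj{A}\colon Y \to \predual{X}$) is essential.
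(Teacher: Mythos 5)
Your proof is correct and follows essentially the same route as the paper: a minimizing sequence, boundedness from the coercivity of $\hat F$ (Proposition \ref{prop:convexityintegral}), weak* compactness via Banach--Alaoglu and Theorem \ref{thm:duality}, and then lower semicontinuity of both terms, with Theorem \ref{thm:lsc} handling the regularizer and the weak*-continuity of $\mu \mapsto \mu(S)$ (which is exactly Remark \ref{rem:consweakstar}) handling the data term. The only difference is that you spell out details the paper delegates to that remark.
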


\begin{proof}
Consider a minimizing sequence $\{\mu^n\}_n$ in  $\mathcal{M}(S;X)$ for  \eqref{eq:lifting}. Then, thanks to \cref{prop:convexityintegral}, the sequence  $\{\mu^n\}_n$ is uniformly bounded in total variation. Hence, by the Banach--Alaoglu theorem and \cref{thm:duality}, there exists $\mu \in \mathcal{M}(S;X)$ such that, up to a subsequence, $\mu^n \weakstar \mu$. 
Thanks to the weak*-to-weak continuity of $A$, applying Remark \ref{rem:consweakstar} and the sequential weak* lower semicontinuity of $\hat F$ provided by \cref{thm:lsc}, we conclude that $\mu$ is a minimizer of~\eqref{eq:lifting}. 
\end{proof}

\begin{cor}
There exists a pair $(u,\sigma)$ with $\sigma \in \mathcal{M}_+(S)$ and $u \in L^1((S,\sigma); X)$ that minimizes \eqref{eq:inverseproblem2}. Consequently, the pair $(\int_S u(s)\, d\sigma(s), \sigma) \in X \times \mathcal{M}_+(S)$ minimizes \eqref{eq:inverseproblem}.
\end{cor}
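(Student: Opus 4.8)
The plan is to combine the two immediately preceding results, so that the corollary reduces to pure bookkeeping. First I would invoke \cref{prop:existencelifting} to obtain a measure $\bar\mu \in \mathcal{M}(S;X)$ minimizing the lifted problem \eqref{eq:lifting}. This is where all the analytic substance sits: existence there rests on the coercivity estimate \eqref{eq:estcoercivity} from \cref{prop:convexityintegral} (which forces any minimizing sequence to be bounded in total variation), the weak* sequential compactness of bounded sets in $\mathcal{M}(S;X)$ supplied by Banach--Alaoglu together with the duality \cref{thm:duality}, the weak*-to-weak continuity of $A$, and the weak* lower semicontinuity of $\hat F$ established in \cref{thm:lsc}. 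None of this needs to be redone for the corollary.

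Next I would read off a minimizer of \eqref{eq:inverseproblem2} from the ``conversely'' direction of \cref{prop:equivalence}. Setting $\sigma := |\bar\mu| \in \mathcal{M}_+(S)$ and $u := \frac{\bar\mu}{|\bar\mu|}$, the Radon--Nikod\'ym property of $X$ (see \cref{rem:rnholds}) guarantees that this density exists and belongs to $L^1((S,\sigma);X)$, so that $(u,\sigma)$ is an admissible pair for \eqref{eq:inverseproblem2}. \cref{prop:equivalence} asserts precisely that this pair is a minimizer of \eqref{eq:inverseproblem2}, which gives the first claim.

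Finally, to transfer to \eqref{eq:inverseproblem}, I would observe that, again by the Radon--Nikod\'ym property, $\mathcal{I}_\sigma u = \int_S \frac{\bar\mu}{|\bar\mu|}(s)\, d|\bar\mu|(s) = \bar\mu(S)$, so that $(\mathcal{I}_\sigma u, \sigma) = (\bar\mu(S), |\bar\mu|)$ is exactly the pair that \cref{prop:equivalence} identifies as a minimizer of \eqref{eq:inverseproblem}. Alternatively, one could invoke the elementary correspondence between \eqref{eq:inverseproblem} and \eqref{eq:inverseproblem2} recorded in \cref{sec:inverse} and simply apply $\mathcal{I}_\sigma$ to the minimizer found above.

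I do not expect any genuine obstacle here: the corollary is a direct consequence of existence in the lifted problem combined with the equivalence, and the hard work has already been carried out in \cref{prop:existencelifting}, \cref{prop:equivalence}, and \cref{thm:lsc}. The only point deserving a moment's care is the admissibility of the reconstructed pair $(u,\sigma)$, namely that $u = \frac{\bar\mu}{|\bar\mu|}$ genuinely lies in $L^1((S,\sigma);X)$ with $\mathcal{I}_\sigma u = \bar\mu(S)$, and this is immediate from the Radon--Nikod\'ym property guaranteed by \cref{rem:rnholds}.
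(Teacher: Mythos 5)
Your proposal is correct and follows exactly the paper's route: the paper's own proof is the one-line observation that the corollary follows directly from \cref{prop:equivalence} and \cref{prop:existencelifting}, which is precisely the combination you spell out. The extra care you take with the admissibility of $(\frac{\bar\mu}{|\bar\mu|}, |\bar\mu|)$ via the Radon--Nikod\'ym property is already handled inside the proof of \cref{prop:equivalence}, so nothing further is needed.
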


\begin{proof}
The proof follows directly from \cref{prop:equivalence} and \cref{prop:existencelifting}.
\end{proof}

\subsection{Extremal points of the lifted regularizer}
We conclude this section by characterizing the set of extremal points of the unit ball of $\hat F$, that is 
\begin{equation}\label{eq:ball}
B := \{\mu \in \mathcal{M}(S;X): \hat F(\mu)\leq 1\}\,,
\end{equation}
in terms of the extremal points of 
\begin{equation}
B_s := \{v \in X: J(v,s) \leq 1\}
\end{equation}
for $s \in S$. 
This result will be fundamental in Section \ref{sec:representer} to prove a representer theorem for inverse problems regularized with the infinite infimal convolution functional and in  Section \ref{sec:gcg} to set up a generalized conditional gradient method for its numerical solution. 
Note that the novelty of Theorem~\ref{thm:extremal} is that it covers infinite infimal convolution functionals. The finite case is studied in~\cite{carioni2022extremal, iglesias2021extremal}.

\begin{thm}\label{thm:extremal}
The extremal points of $B$ can be characterized as follows:
\begin{itemize}
    \item If $0 \in \Ext(B_r)$ for each $r \in S$, then $\Ext(B) = \{v \delta_r : r \in S,\, v \in \Ext(B_r)\}$.  
    \item If there exists $r \in S$ such that $0 \notin \Ext(B_r)$, then $\Ext(B) = \{v \delta_r : r \in S,\, v \in \Ext(B_r) \smallsetminus\{0\}\}$.
\end{itemize}

\end{thm}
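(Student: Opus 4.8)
The plan is to establish the two set inclusions separately, the whole argument resting on two structural properties of $\hat F$ proved earlier: its additivity on mutually singular measures and its coercivity (\cref{prop:convexityintegral}), together with the identity $\hat F(v\delta_r)=J(v,r)$, which is the special case $\sigma=\|v\|_X\delta_r$ of \cref{lem:ins}. First I would record two elementary consequences of positive one-homogeneity. Since $\hat F(\mu)\ge C\|\mu\|_{\mathcal M}$, coercivity forces $J(v,r)>0$ for $v\ne0$ and, arguing with the scalings $(1\pm\e)\mu$, every \emph{nonzero} $\mu\in\Ext(B)$ must lie on the boundary $\alpha\hat F(\mu)=1$ and every nonzero $v\in\Ext(B_r)$ on the boundary $\alpha J(v,r)=1$ (otherwise $\mu=\tfrac12((1+\e)\mu+(1-\e)\mu)$ is a proper convex combination for small $\e>0$, contradicting extremality).

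For the inclusion $\Ext(B)\subset\{v\delta_r\}$, take $\mu\in\Ext(B)$ with $\mu\ne0$, so $\alpha\hat F(\mu)=1$. I would first show $|\mu|$ is a single atom. If it were not, one can split $S=S_1\sqcup S_2$ into disjoint Borel sets with $|\mu|(S_1),|\mu|(S_2)>0$; setting $\mu_i:=\mu\res S_i$ gives $\mu=\mu_1+\mu_2$ with $|\mu_1|\perp|\mu_2|$, so that $\hat F(\mu)=\hat F(\mu_1)+\hat F(\mu_2)$ by \cref{prop:convexityintegral} and, by coercivity, $a:=\alpha\hat F(\mu_1)>0$ and $b:=\alpha\hat F(\mu_2)>0$ with $a+b=1$. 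Then $\mu=a(\mu_1/a)+b(\mu_2/b)$ is a proper convex combination of the two distinct boundary points $\mu_1/a,\mu_2/b\in B$ (distinct because they have disjoint supports), contradicting extremality. Hence $\mu=v\delta_r$ with $v=\mu(\{r\})\ne0$; and if $v=\tfrac12(v_1+v_2)$ with $v_1,v_2\in B_r$ distinct then $\mu=\tfrac12(v_1\delta_r+v_2\delta_r)$ contradicts extremality, so $v\in\Ext(B_r)\smallsetminus\{0\}$.

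The reverse inclusion is the main obstacle: given $v\in\Ext(B_r)\smallsetminus\{0\}$ and a representation $v\delta_r=\tfrac12(\mu_1+\mu_2)$ with $\mu_1,\mu_2\in B$, the measures $\mu_1,\mu_2$ could a priori carry mutually cancelling mass away from $r$. To exclude this I would decompose $\mu_i=a_i\delta_r+\mu_i'$ with $a_i:=\mu_i(\{r\})$ and $\mu_i':=\mu_i\res(S\smallsetminus\{r\})$; comparing the identity on $\{r\}$ and on its complement yields $a_1+a_2=2v$ and $\mu_2'=-\mu_1'$. Additivity on the mutually singular pieces gives $\hat F(\mu_i)=J(a_i,r)+\hat F(\mu_i')$, and summing the two constraints $\alpha\hat F(\mu_i)\le1$ while invoking convexity $J(a_1,r)+J(a_2,r)\ge2J(v,r)=2/\alpha$ forces $\hat F(\mu_1')+\hat F(-\mu_1')\le0$; since $\hat F\ge0$ and $\hat F(\mu_1')\ge C\|\mu_1'\|_{\mathcal M}$, this gives $\mu_1'=\mu_2'=0$. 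Thus $\mu_i=a_i\delta_r$ with $a_1,a_2\in B_r$ and $v=\tfrac12(a_1+a_2)$, so extremality of $v$ forces $a_1=a_2=v$ and $\mu_1=\mu_2=v\delta_r$.

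It remains to locate the zero measure, which is exactly what separates the two cases. If $0\in\Ext(B_r)$ for every $r$, I would show $0\in\Ext(B)$: any $\mu\ne0$ with $\pm\mu\in B$ has both $\int_S J(\pm\tfrac{\mu}{|\mu|}(s),s)\,d|\mu|(s)$ finite, hence at $|\mu|$-a.e.\ $s$ the values $J(u(s),s)$ and $J(-u(s),s)$ are both finite with $u(s):=\tfrac{\mu}{|\mu|}(s)\ne0$; a small multiple of $u(s)$ then lies, together with its negative, in $B_s$, contradicting $0\in\Ext(B_s)$. Since the nonzero extremal points were identified above as exactly $\{v\delta_r:v\in\Ext(B_r)\smallsetminus\{0\}\}$ and $0=0\cdot\delta_r$ corresponds to $v=0\in\Ext(B_r)$, the first bullet follows. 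Conversely, if $0\notin\Ext(B_{r_0})$ for some $r_0$, there is $w\ne0$ with $\pm w\in B_{r_0}$, whence $0=\tfrac12(w\delta_{r_0}+(-w)\delta_{r_0})$ shows $0\notin\Ext(B)$, giving the second bullet.
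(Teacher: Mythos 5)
Your proof is correct and follows essentially the same route as the paper's: both arguments rest on the additivity of $\hat F$ over mutually singular measures, its coercivity, and the identity $\hat F(v\delta_r)=J(v,r)$, with the same restriction-to-$\{r\}$ (resp.\ splitting of the support) decomposition in each inclusion and the same case analysis for the zero measure. The only deviations are cosmetic streamlinings: you exploit the explicit cancellation $\mu_2'=-\mu_1'$ where the paper chains inequalities, and in the zero case you reduce to an antipodal pair and argue pointwise with the unit-norm polar density rather than normalizing the densities with respect to $|\mu_1|+|\mu_2|$ by the factor $c(r)$.
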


\begin{proof}
Note that it is enough to prove the statement for $\alpha = 1$, since, due the positive one-homogeneity of $J(\cdot,s)$ for every $s \in S$ and of $\hat F$, the general case follows from a scaling argument.
For every $r \in S$ and $v \in B_r$, using the one-homogeneity of $J$, we have 
\begin{equation}
\hat F(v\delta_r) = J\left(\frac{v}{\norm{v}_X},r\right)\norm{v}_X = J(v,r)\,,
\end{equation}
which shows that $v \delta_r \in B$.

First, we show that $\Ext(B) \subset \{v \delta_{r} : r \in S, \,v \in \Ext(B_r)\}$ and in case there exists $\hat r \in S$ such that $0 \notin \Ext(B_{\hat r})$ it also holds that $\Ext(B) \subset \{v \delta_{r} : r \in S, \,v \in \Ext(B_r)\smallsetminus \{0\}\}$. Note that, given $\mu \in \Ext(B)$, either $\hat F (\mu) = 0$ or $\hat F(\mu) = 1$, since $\hat F$ is positively one-homogeneous. 
Suppose that $\hat F (\mu) = 0$, so that $\mu = 0$ by Proposition \ref{prop:convexityintegral}. If $0 \in \Ext(B_r)$ for each $r \in S$, then we immediately have that $0 \in \{v \delta_{r} : r \in S, \,v \in \Ext(B_r)\}$ as we wanted to prove. Suppose instead that there exists $\hat r \in S$ such that $0 \notin \Ext(B_{\hat r})$. Then,
\begin{align}
0 = \lambda v_1 + (1 - \lambda) v_2
\end{align}
for $v_1,v_2 \in B_{\hat r}$, $v_1 \neq v_2$ and $0<\lambda<1$. Therefore the convex decomposition $0 = \lambda v_1 \delta_{\hat r} + (1 - \lambda) v_2 \delta_{\hat r}$ 
shows that $0$ is not an extremal point of $B$ since $v_1 \delta_{\hat r}, v_2 \delta_{\hat r} \in B$.

Suppose now that $\hat F (\mu) = 1$. Note that, since $\hat F (\mu) = 1$ implies that $\mu \neq 0$, it is enough to show that $ \mu \in \{v \delta_{r} : r \in S, \,v \in \Ext(B_r)\}$. We start by showing that $\mu$ is supported on a singleton. If this does not hold, then there exists a Borel set $A \subset S$ such that $0< \mu(A) < \mu(S)$. Define the constants
\begin{equation*}
C(A) \defeq \int_A J\left(\frac{\mu}{|\mu|}(s),s\right)\, d|\mu|(s) \quad \text{and} \quad C(S\smallsetminus A) \defeq \int_{S\smallsetminus A} J\left(\frac{\mu}{|\mu|}(s),s\right)\, d|\mu|(s) \,.
\end{equation*}
Since $\|v\|_X \leq C J(v,s)$ for each $v \in X$, $s\in S$ by \ref{ass:coercivity}, there exists a  constant $c>0$ such that $C(A) \geq c |\mu|(A) > 0$ and $C(S\smallsetminus A) \geq c |\mu|(S\smallsetminus A) > 0$. Moreover, since $\hat F(\mu) =1$, it holds that $C(A) + C(S\smallsetminus A) = 1$ and in particular, they are both finite.
Defining then the measures
\begin{equation*}
\mu_1 := \frac{\mu \res A}{C(A)} \quad \text{ and } \quad \mu_2 := \frac{\mu \res (S\smallsetminus A)}{C(S\smallsetminus A)},
\end{equation*}
we notice that
\begin{equation*}
|\mu_1| = \frac{|\mu| \res A}{C(A)}, \quad \frac{\mu_1}{|\mu_1|} =  \frac{\mu}{|\mu|}\ \  |\mu_1|-a.e.\,,\quad
|\mu_2| = \frac{|\mu| \res (S \smallsetminus A)}{C(S\smallsetminus A)}, \quad  \frac{\mu_2}{|\mu_2|} =  \frac{\mu}{|\mu|} \ \  |\mu_2|-a.e. \,.
\end{equation*}
In particular, we have that $\hat F(\mu_1) = \hat F(\mu_2) = 1$
and thus
\begin{equation*}
\mu = C(A) \mu_1 + C(S\smallsetminus A) \mu_2
\end{equation*}
gives a non-trivial decomposition, which is a contradiction to $\mu \in \Ext(B)$.
Therefore, $\mu$ is supported on a singleton and thus, $\mu = v \delta_r$ for some $v \in X$ and $r \in S$. We now show that $v \in \Ext(B_r)$. If this does not hold, then there exist $0 < \lambda <1$ and $v_1,v_2 \in B_r$ such that  $v_1 \neq v_2$ and
\begin{equation}
v = \lambda v_1 + (1-\lambda) v_2\,.
\end{equation}
Multiplying the previous equation by $\delta_r$ we obtain that
$\mu = \lambda v_1 \delta_{r} + (1- \lambda) v_2 \delta_{r}$
and, since $v_1 \delta_r, v_2 \delta_r \in B$,
we conclude that $\mu$ is not an extremal point of $B$, which is a contradiction.

We now show that $\Ext(B) \supset \{v \delta_{r} : r \in S,\  v \in \Ext(B_r) \smallsetminus \{0\}\}$. Consider $r\in S$, $v \in \Ext(B_r)\smallsetminus \{0\}$, $0 < \lambda < 1$ and $\mu_1,\mu_2 \in B$ such that 
\begin{equation}\label{eq:cv}
v\delta_r = \lambda \mu_1 + (1-\lambda) \mu_2\,.
\end{equation}
Note that, since $v \neq 0$ and $J(\cdot,r)$ is coercive (Assumption \ref{ass:coercivity}) and positively one-homogeneous, we have $J(v,r) = 1$, as otherwise $v \notin \Ext(B_r)$. Therefore
\begin{align}\label{eq:cont}
    1  = J(v,r) = \hat F(v \delta_r) & = \hat F([\lambda \mu_1 + (1-\lambda) \mu_2] \res \{r\}) + \hat F([\lambda \mu_1 + (1-\lambda) \mu_2] \res (S \smallsetminus \{r\})) \nonumber\\
    & \leq \lambda \hat F(\mu_1 \res \{r\}) + (1-\lambda)  \hat F(\mu_2 \res \{r\}) \leq 1\,,
\end{align}
where in the third equality we apply \eqref{eq:mutuallysingularthesis} in Proposition \ref{prop:convexityintegral}, in the first inequality we use that $\hat F(0) = 0$ as well as the convexity of $\hat F$, and in the last inequality we use that $\hat F(\mu_j \res \{r\}) \leq\hat F(\mu_j)$ for $j=1,2$, that is true due to  \eqref{eq:mutuallysingularthesis}, since $\hat F \geq 0$. Note now that $\mu_j \res \{r\} = v_j \delta_r$ with $v_j \in B_r$ for $j=1,2$, and $\hat F(v_j \delta_r) = J(v_j,r) =1$ for $j=1,2$, since otherwise we would contradict \eqref{eq:cont}. Moreover, thanks to the estimate $1 \leq \hat F(\mu_j \res (S\smallsetminus \{r\})) + \hat F(v_j \delta_r) = \hat F(\mu_j) \leq 1$, we deduce that $\hat F(\mu_j \res (S\smallsetminus \{r\})) = 0$ and thus $\mu_j \res (S\smallsetminus \{r\}) = 0$ for $j=1,2$, due to \eqref{eq:estcoercivity} in Proposition \ref{prop:convexityintegral}. In particular, we obtain $\mu_j = v_j \delta_r$ for $j=1,2$.
Finally, substituting $\mu_j = v_j \delta_r$ for $j=1,2$ in \eqref{eq:cv} we deduce that $v = \lambda v_1 + (1-\lambda)v_2$, which implies that $v_1 = v_2 = v$, due to the extremality of $v$. We then conclude that $\mu_1 = \mu_2$, implying that $v\delta_r \in \Ext(B)$.

We conclude the proof by showing that if $0 \in \Ext(B_r)$ for every $r\in S$, then $0\in \Ext(B)$.
If $0 = \lambda \mu_1 + (1-\lambda) \mu_2$ with $0<\lambda<1$ and $\mu_1,\mu_2 \in B$, note that $\mu_j \ll |\mu_1| + |\mu_2|$ for $j=1,2$. Therefore, applying the Radon–Nikod\'ym property of $X$, cf. Definition \ref{def:rd}, we have that for $j=1,2$ it holds
\begin{align*}
   1 \geq \hat F(\mu_j) & = \int_S J\left(\frac{\mu_j}{|\mu_j|}(s),s\right)\, d|\mu_j|(s) = \int_S J\left(\frac{\mu_j}{|\mu_j|}(s),s\right)\frac{|\mu_j|}{|\mu_1| + |\mu_2|}(s)\, d(|\mu_1| + |\mu_2|)(s) \\
    & = \int_S J\left(\frac{\mu_j}{|\mu_1| + |\mu_2|}(s),s\right)\, d(|\mu_1| + |\mu_2|)(s)  \,,  
\end{align*}
where we have also used the positive one-homogeneity of $J(\cdot,s)$ and \eqref{eq:over1}, \eqref{eq:over2} with $\mu_1 = \mu$ and $\mu_2 = \nu$. In particular,  $J\left(\frac{\mu_j}{|\mu_1| + |\mu_2|}(s),s\right) < +\infty$ for $(|\mu_1| + |\mu_2|)$-a.e. $s \in S$ and $j=1,2$. Now, from the convex decomposition $0 = \lambda \mu_1 + (1-\lambda) \mu_2$, by applying the Radon–Nikod\'ym property of $X$ we have that $0 = \left(\lambda \frac{\mu_1}{|\mu_1| + |\mu_2|} + (1 - \lambda) \frac{\mu_2}{|\mu_1| + |\mu_2|}\right)(|\mu_1| + |\mu_2|)$
and thus 
\begin{align}\label{eq:conx}
0 = \lambda \frac{\mu_1}{|\mu_1| + |\mu_2|}(r) + (1 - \lambda) \frac{\mu_2}{|\mu_1| + |\mu_2|}(r) \quad  \text{for} \ \ (|\mu_1| + |\mu_2|)-a.e. \ \, r \in S\,.     
\end{align}
Define $c(r) = J\left(\frac{\mu_1}{|\mu_1| + |\mu_2|}(r),r\right) + J\left(\frac{\mu_2}{|\mu_1| + |\mu_2|}(r),r\right) < \infty$ for $(|\mu_1| + |\mu_2|)$-a.e. $r \in S$. By Assumption \ref{ass:coercivity}, it holds that $c(r) >0$ for $(|\mu_1| + |\mu_2|)$-a.e. $r \in S$ such that $\frac{\mu_1}{|\mu_1| + |\mu_2|}(r) \neq 0$ or $\frac{\mu_2}{|\mu_1| + |\mu_2|}(r)  \neq 0$. Therefore, up to setting $c(r)$ equal to an arbitrary positive number on the points such that $\frac{\mu_j}{|\mu_1| + |\mu_2|}(r) = 0$ for $j=1,2$, we obtain from \eqref{eq:conx} that
\begin{align}
    0 = \lambda \frac{1}{c(r)}\frac{\mu_1}{|\mu_1| + |\mu_2|}(r) + (1 - \lambda) \frac{1}{c(r)}\frac{\mu_2}{|\mu_1| + |\mu_2|}(r) \quad \text{for}\  (|\mu_1| + |\mu_2|)-a.e. \ \,r \in S\,.
\end{align}
Thanks to the positive one-homogeneity of $J(\cdot,r)$, we have that $\frac{1}{c(r)}\frac{\mu_j}{|\mu_1| + |\mu_2|}(r) \in B_r$ for $(|\mu_1| + |\mu_2|)$-a.e. $r \in S$, $j=1,2$. Therefore, since $0 \in \Ext(B_r)$
for every $r \in S$, it holds that $0 = \frac{\mu_1}{|\mu_1| + |\mu_2|}(r) = \frac{\mu_2}{|\mu_1| + |\mu_2|}(r)$ for $(|\mu_1| + |\mu_2|)$-a.e. $r \in S$ and thus $\mu_j = \frac{\mu_j}{|\mu_1| + |\mu_2|} (|\mu_1| + |\mu_2|) = 0$ for $j=1,2$ as we wanted to prove. 
\end{proof}
In the following sections, we will apply Theorem \ref{thm:extremal} to the rescaled balls 
\begin{equation}\label{eq:ball}
B^\alpha := \{\mu \in \mathcal{M}(S;X): \alpha\hat F(\mu)\leq 1\} = \alpha B\,,
\end{equation}
and
\begin{equation}
B^\alpha_s := \{v \in X: \alpha J(v,s) \leq 1\} = \alpha B_s
\end{equation}
for $\alpha >0$.
Note that the statement of Theorem \ref{thm:extremal} adapts by simply replacing $B$ with $B^\alpha$ and $B_r$ with $B_r^\alpha$.

\section{A representer theorem for the infinite infimal convolution functional}\label{sec:representer}

In this section we analyze sparse solutions for inverse problems regularized with the infinite infimal convolution functional introduced in Section \ref{sec:inverse}, namely 
\begin{equation}\label{eq:invrepr}
  \inf_{\substack{v \in X, \\ \sigma\in \mathcal{M}_+(S)}} \frac{1}{2}\|Av - f\|^2_Y + \alpha R(v,\sigma)\,,
\end{equation}
where we used the same notations as in Section \ref{sec:inverse}.

Sparse solutions of infinite dimensional inverse problems regularized with convex functionals have been recently studied in \cite{bc, chambolle, unsersplines}. In particular, it has been shown that in a general inverse problem, the extremal points of the ball of the convex regularizer can be seen as the atoms of the problem, in analogy with more standard sparsity promoting regularizers, such as the $\ell^1$ norm in a finite dimensional setting and the total variation norm for Radon measures. This claim can be justified by invoking so-called representer theorems that, in case  of finite-dimensional measurements, ensure the existence of a solution that is representable as a finite linear combination of the mentioned extremal points.
In the next theorem, we take advantage of the characterization of extremal points for the ball of the lifted regularizer, see Theorem \ref{thm:extremal}, and we use the results in \cite{bc} to 
prove a representer theorem for \eqref{eq:invrepr} assuming finite-dimensional measurements. In particular, we apply the results in \cite{bc} to the lifted problem \eqref{eq:lifting} and then, thanks to the equivalence of \eqref{eq:lifting} with \eqref{eq:invrepr}, we deduce the representer theorem for \eqref{eq:invrepr}. Note that, in order to apply the results in \cite{bc}, we need to additionally assume that $J(\cdot,s)$ are seminorms for every $s \in S$. Even if such assumption is often satisfied, there are relevant cases for which a more general result would be desirable. For example, in case of  inverse problems for dynamic measures regularized with the Benamou-Brenier energy \cite{bcfr}, it can be checked that the regularizer is convex, but not positively $1$-homogeneous, since it is equal $+\infty$ for non-positive measures. More in general, the presence of convex constraints in the regularization could hinder the positive $1$-homogeneity of the regularizer.  
Having said that, we believe that it would be possible to remove the seminorm assumption by applying the results in \cite{chambolle}. 

\begin{thm}\label{thm:representer}
Assume that $J(\lambda v,s) = |\lambda| J(v,s)$ for every $\lambda \in \R$, $v\in X$ and $s\in S$. Suppose that the measurement space $Y$ is finite-dimensional with ${\rm dim}(Y) = Q \in \N$ and that ${\rm coni}(\{Av: v \in \bigcup_{s\in S} \domain J(\cdot,s)\}) = Y$, where ${\rm coni}(C)$ denotes the conical hull of the set $C$. Then, there exists $(v^*,\sigma^*) \in X \times \mathcal{M}_+(S)$ that minimizes \eqref{eq:inverseproblem} such that 
\begin{equation}
v^* = \sum_{i = 1}^M \lambda_i v_i \,, \quad \sigma^* =  \sum_{s \in \{s_i : \,i =1,\ldots,M\}} \bigg\|\sum_{j \in \{1,\ldots, M :\, s_j = s\}}  \lambda_j v_j\bigg\|_X\delta_{s}\,,
\end{equation}
where $M \leq Q$, $s_i \in S$, $v_i \in \Ext(B^\alpha_{s_i}) \smallsetminus \{0\}$ and $\lambda_i >0$ for every $i = 1,\ldots, M$.
\end{thm}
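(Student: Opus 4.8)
The plan is to derive the representer theorem for \eqref{eq:invrepr} from the abstract representer theorem of \cite{bc}, applied to the lifted problem \eqref{eq:lifting}, and then to transport the resulting sparse minimizer back to the original variables via Proposition \ref{prop:equivalence}.

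First I would observe that, under the strengthened homogeneity hypothesis $J(\lambda v,s) = |\lambda| J(v,s)$, each $J(\cdot,s)$ is a seminorm and, by the coercivity \ref{ass:coercivity}, even a norm on its domain. Consequently $\hat F$ becomes a seminorm on $\mathcal{M}(S;X)$: the full homogeneity $\hat F(\lambda\mu) = |\lambda|\hat F(\mu)$ is obtained exactly as the positive one-homogeneity in Proposition \ref{prop:convexityintegral}, while convexity and the coercivity estimate \eqref{eq:estcoercivity} are already established there and weak-* lower semicontinuity in Theorem \ref{thm:lsc}. Writing $\mathcal{A}\mu := A\mu(S)$, the map $\mu \mapsto \mu(S)$ is weak*-to-weak* continuous from $\mathcal{M}(S;X)$ into $X$ (test against the constant functions in $C(S;\predual{X})$), so by the weak*-weak continuity of $A$ and the finite-dimensionality of $Y$ the operator $\mathcal{A}$ is weak*-to-norm continuous into $Y \cong \R^Q$. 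Thus \eqref{eq:lifting} takes the form $\min_\mu G(\mathcal{A}\mu) + \alpha\hat F(\mu)$ with the smooth convex data term $G(y) = \tfrac{1}{2}\|y-f\|_Y^2$, and the assumption ${\rm coni}(\{Av : v \in \bigcup_{s\in S} \domain J(\cdot,s)\}) = Y$ provides the nondegeneracy condition required by \cite{bc}.

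With existence of a minimizer already guaranteed by Proposition \ref{prop:existencelifting}, I would then invoke \cite{bc} to obtain a minimizer $\mu^*$ of \eqref{eq:lifting} of the form $\mu^* = \sum_{i=1}^M c_i \mu_i$ with $c_i > 0$, $M \leq Q$, and each $\mu_i \in \Ext(B)$. Since each $J(\cdot,s)$ is a coercive seminorm, the ball $B_s$ is symmetric and bounded, whence $0 \notin \Ext(B_s)$ for every $s \in S$; we are therefore in the second case of Theorem \ref{thm:extremal}, and each $\mu_i = v_i \delta_{s_i}$ with $s_i \in S$ and $v_i \in \Ext(B_{s_i}) \smallsetminus \{0\}$. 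Transporting back through Proposition \ref{prop:equivalence}, the pair $(v^*,\sigma^*) := (\mu^*(S), |\mu^*|)$ minimizes \eqref{eq:inverseproblem}; setting $\lambda_i := c_i$ and evaluating on $S$ (where $\delta_{s_i}(S) = 1$) gives $v^* = \mu^*(S) = \sum_{i=1}^M \lambda_i v_i$. Finally, grouping the atoms over the distinct base points yields $\mu^* = \sum_{s \in \{s_i\}} w_s \delta_s$ with $w_s = \sum_{j : s_j = s} \lambda_j v_j$, and since these points are distinct the variation measure is $|\mu^*| = \sum_{s \in \{s_i\}} \|w_s\|_X \delta_s$, which is precisely the asserted formula for $\sigma^*$.

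The step I expect to be the main obstacle is the careful matching of \eqref{eq:lifting} to the hypotheses of the abstract representer theorem in \cite{bc}: confirming that the coni condition is exactly the nondegeneracy assumption yielding the finite bound $M \leq Q$, and that the abstract extremal points produced there coincide with the concrete ones characterized in Theorem \ref{thm:extremal}. The remaining steps — the homogeneity of $\hat F$ and the computation of $|\mu^*|$ under grouping — are routine.
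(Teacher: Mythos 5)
Your proposal follows essentially the same route as the paper's own proof: lift to \eqref{eq:lifting}, check that $\hat F$ is a coercive, weak*-lower semicontinuous seminorm with weak*-compact sublevel sets and that the coni assumption yields $K(\domain \hat F)=Y$, invoke the representer theorem of \cite{bc}, identify the resulting extremal points via Theorem \ref{thm:extremal}, and transport back through Proposition \ref{prop:equivalence}. The only slight imprecision is the claim that $0\notin\Ext(B_s)$ for \emph{every} $s\in S$ (this can fail if $\domain J(\cdot,s)=\{0\}$ for some $s$), but since the second case of Theorem \ref{thm:extremal} requires only one such $s$ — which the coni hypothesis guarantees via your symmetry argument — and the paper in any case simply discards the zero atoms from the \cite{bc} decomposition, the conclusion is unaffected.
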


\begin{proof}
We rewrite \eqref{eq:invrepr} in $\mathcal{M}(S;X)$ according to \eqref{eq:lifting}

\begin{equation}\label{eq:liftingrep}
\inf_{\mu \in \mathcal{M}(S;X)} \frac{1}{2}\|K\mu - f\|_Y^2 +   \alpha \int_S J\left(\frac{\mu}{|\mu|}(s),s\right)\, d|\mu|(s)\,,
\end{equation}
where $K := A \circ T$ and $T\mu := \mu(S)$. Note that $K : \mathcal{M}(S;X) \rightarrow Y$ is a linear operator and it is weak*-to-strong continuous. Indeed, $K$ is weak*-to-weak continuous by Remark \ref{rem:consweakstar}, and weak topology and strong topology on $Y$ coincide by the finite dimensionality of $Y$.
Note that $\hat F(\lambda \mu) = |\lambda| \hat F(\mu)$ for every $\mu \in \mathcal{M}(S;X)$ and $\lambda \in \R$. Indeed, for every $\lambda  \neq 0$,  \cref{prop:totalvariationproduct} yields that $\dfrac{\lambda \mu}{|\lambda \mu|}(s) =    \sign(\lambda)\dfrac{\mu}{|\mu|}(s)$ for $|\mu|$-a.e. $s \in S$  and hence, using that $J(\lambda v,s) = |\lambda| J(v,s)$ for every $\lambda \in \R$, $v\in X$ and $s\in S$, we have
\begin{equation*}
\hat F(\lambda \mu) = \int_S J\left(\sign(\lambda)\frac{\mu}{|\mu|}(s),s\right)\, d(|\lambda\mu|)(s) = |\lambda| \hat F( \mu)\,.
\end{equation*}
Therefore, \cref{prop:convexityintegral} and \cref{thm:lsc} imply that $\hat F$ is a lower semicontinuous seminorm on  $\mathcal{M}(S;X)$ equipped with the weak* topology. Moreover, \cref{prop:convexityintegral} and the Banach--Alaoglu theorem (see also \cref{thm:duality}) imply that the sublevel set of $\hat F$ are compact in the weak* topology of $\mathcal{M}(S;X)$.

 We now show that $K(\domain \hat F) = Y$. Note that it is enough to prove that  $Y \subset K(\domain \hat F) = A(T(\domain \hat F))$. Given $y \in Y$, using that  ${\rm coni}(\{Av: v \in \bigcup_{s\in S} \domain J(\cdot,s)\}) = Y$, we can write $y$ as  $y = Av$ for $v = \sum_{j=1}^N c_j v_{s_j}$ with $N \in \N$, $s_j \in S$, $v_{s_j} \in \domain J(\cdot,s_j)$ and $c_j \geq 0$. Define then $\mu = \sum_{j=1}^N c_j v_{s_j}\delta_{s_j} \in \mathcal{M}(S;X)$ and note that $\mu \in \domain(\hat F)$, since by \eqref{eq:mutuallysingularthesis} in Proposition \ref{prop:convexityintegral}, the convexity and the positive one-homogeneity of $J(\cdot,s)$, we have
 \begin{align}
     \hat F(\mu) = \hat F\left(\sum_{j=1}^N c_j v_{s_j}\delta_{s_j}\right) \leq \sum_{j=1}^N c_j J(v_{s_j}, s_j) < \infty \,.
 \end{align}
We then conclude that $y \in A(T(\domain \hat F))$ by observing that $T\mu = v$. 

Therefore, we can apply the representer theorem \cite[Theorem 3.3]{bc} 
to ensure the existence of a minimizer $\mu^* \in \mathcal{M}(S;X)$ of \eqref{eq:liftingrep} that can be written as
\begin{equation}\label{eq:zerolin}
\mu^* = \sum_{i = 1}^{M'} \lambda_i \mu_i\,,
\end{equation}
where $M' \leq Q$, $\lambda_i >0$ and $\mu_i \in \text{Ext}(B^\alpha)$ for $i = 1,\ldots, M'$.
Using \cref{thm:extremal}, we infer that there exist $s_i \in S$ and $v_i \in \text{Ext}(B^\alpha_{s_i})$ such that $\mu_i = v_i \delta_{s_i}$, so that, by removing from the linear combination \eqref{eq:zerolin} the measures $\mu_i$ that are equal to zero and possibly rearranging the indices, we obtain that 
\begin{equation}
\mu^* = \sum_{i = 1}^M \lambda_i  v_i \delta_{s_i}\,,
\end{equation}
where $M \leq M'$, $s_i \in S$ and $v_i \in \text{Ext}(B^\alpha_{s_i}) \smallsetminus \{0\}$.    
Finally, applying \cref{prop:equivalence} we deduce that the pair $(v^*,\sigma^*) \in X \times \mathcal{M}_+(S)$ defined as
\begin{align*}
v^*  = \sum_{i=1}^{M}  \lambda_i v_i \, ,\qquad \sigma^* =  \sum_{s \in \{s_i : \,i =1,\ldots,M\}} \bigg\|\sum_{j \in \{1,\ldots, M :\, s_j = s\}}  \lambda_j v_j\bigg\|_X\delta_{s} 
\end{align*}
is a minimizer of \eqref{eq:invrepr} as required.
\end{proof}

\section{A generalized conditional gradient method}\label{sec:gcg}

In this section we will describe how to solve the lifted inverse problem defined in  \eqref{eq:lifting} by a generalized conditional gradient method. As a consequence of the equivalence provided by \cref{prop:equivalence}, we will then be able to derive a conditional gradient method for \eqref{eq:inverseproblem} and \eqref{eq:inverseproblem2}. 
We recall that the lifted inverse problem consists in minimizing the following functional:
\begin{equation}\label{eq:inverseproblemconditional}
 \mathcal{G}_\alpha(\mu) = \frac{1}{2}\|A\mu(S) - f\|_Y^2 +   \alpha \hat F(\mu)\,,
\end{equation}
where
\begin{align*}
\hat F(\mu) = \int_S J\left(\frac{\mu}{|\mu|}(s),s\right)\, d|\mu|(s)\,.
\end{align*}
As already done in Section \ref{sec:representer}, denoting by $T : \mathcal{M}(S;X) \rightarrow X$ the linear operator defined as $T\mu := \mu(S)$, we rewrite \eqref{eq:inverseproblemconditional}  equivalently as
\begin{equation}\label{eq:classical}
\mathcal{G}_\alpha(\mu)  = \frac{1}{2}\|K\mu - f\|_Y^2 +   \alpha \hat F(\mu)\,,
\end{equation}
where $K := A\circ T$. Note again that $K$ is weak*-to-weak continuous by Remark \ref{rem:consweakstar}. The pre-adjoint of the linear operator $T : \mathcal{M}(S;X) \rightarrow X$ can be characterized as follows.

\begin{lemma}\label{lem:predual}
The pre-adjoint of the linear operator $T$ is $T_* : \predual{X} \rightarrow C(S;\predual{X})$ is given by
\begin{equation}\label{eq:thesispredual}
(T_* v_* )(s) = v_* \quad \forall s \in S,\,\ \  \forall v_* \in X_*\,.
\end{equation}
\end{lemma}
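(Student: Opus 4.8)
The plan is to verify the defining relation of a pre-adjoint directly. Recall that $T_*$ is the pre-adjoint of $T$ precisely when $\langle T\mu, v_*\rangle = \langle \mu, T_* v_*\rangle$ holds for every $\mu \in \mathcal{M}(S;X)$ and every $v_* \in X_*$, where on the left we use the duality between $X$ and $X_*$, and on the right the duality between $\mathcal{M}(S;X)$ and $C(S;X_*)$ described in \cref{thm:duality}. So first I would fix the candidate $\phi_{v_*} := T_* v_* \in C(S;X_*)$ given by the constant function $\phi_{v_*}(s) = v_*$ for all $s \in S$, and check that it is indeed a well-defined, continuous $X_*$-valued function on $S$ — which is immediate, since a constant function is continuous and $\|\phi_{v_*}\|_{C(S;X_*)} = \|v_*\|_{X_*}$.

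**Next I would compute both sides.** By definition $T\mu = \mu(S)$, so the left-hand side is $\langle \mu(S), v_*\rangle$. For the right-hand side, the duality pairing of $\mu \in \mathcal{M}(S;X)$ with $\phi \in C(S;X_*)$ is $\langle \mu, \phi\rangle = \int_S \langle \phi(s), \tfrac{\mu}{|\mu|}(s)\rangle \, d|\mu|(s)$. Plugging in the constant $\phi = \phi_{v_*}$ and pulling $v_*$ (which does not depend on $s$) through the scalar integral gives
\begin{equation*}
	\langle \mu, T_* v_*\rangle = \int_S \Big\langle v_*, \tfrac{\mu}{|\mu|}(s)\Big\rangle \, d|\mu|(s) = \Big\langle v_*, \int_S \tfrac{\mu}{|\mu|}(s)\, d|\mu|(s)\Big\rangle = \langle v_*, \mu(S)\rangle\,,
\end{equation*}
where the middle equality is the commutation of the bounded linear functional $v_*$ with the Bochner integral, and the last equality uses the Radon--Nikod\'ym representation $\mu(S) = \int_S \tfrac{\mu}{|\mu|}(s)\, d|\mu|(s)$ (valid since $X$ has the Radon--Nikod\'ym property, cf.~\cref{rem:rnholds}). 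This matches the left-hand side, establishing the identity.

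**The only subtle point** — and the step I would flag as the main thing to get right rather than a genuine obstacle — is the interchange of the functional $v_*$ with the Bochner integral; this is a standard property of the Bochner integral for bounded linear operators and applies because $\tfrac{\mu}{|\mu|} \in L^1((S,|\mu|);X)$. Everything else is routine bookkeeping with the duality pairing of \cref{thm:duality}. Having verified the relation for all $\mu$ and all $v_*$, I conclude that the bounded linear map $T_*\colon X_* \to C(S;X_*)$ defined by \eqref{eq:thesispredual} is the pre-adjoint of $T$, which is what the lemma asserts.
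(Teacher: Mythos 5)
Your proposal is correct and follows essentially the same route as the paper: test the constant function $s \mapsto v_*$ against $\mu$ in the duality of \cref{thm:duality} and identify the result with $\langle v_*, \mu(S)\rangle = \langle v_*, T\mu\rangle$. The paper simply asserts this identity "by definition of the integral with respect to the vector measure $\mu$", whereas you unpack it via the Radon--Nikod\'ym density and the commutation of $v_*$ with the Bochner integral — a harmless elaboration of the same step.
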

\begin{proof}
For every $\mu \in \mathcal{M}(S;X)$ and $v_* \in \predual{X}$ there holds
\begin{equation}\label{eq:chainpredual}
\langle (s \mapsto v_*) , \mu \rangle_{\mathcal{M}} = \langle v_* , \mu(S) \rangle_{(\predual{X},X)} = \langle v_* , T
\mu \rangle_{(\predual{X},X)}\,,
\end{equation}
showing \eqref{eq:thesispredual}. Notice that the first equality in \eqref{eq:chainpredual} follows from the definition of the integral with respect to the vector measure $\mu$. 
\end{proof}

\subsection{Description of the algorithm}
A generalized conditional gradient algorithm for minimizing $\mathcal{G}_\alpha$ is based on the construction of a sparse iterate $\mu^k \in \mathcal{M}(S; X)$ given by a linear combination of extremal points of $B$, i.e.,
\begin{align}\label{eq:gcgiterate}
\mu^k = \sum_{i=1}^{N_k} c^k_i \mu^k_i\,,
\end{align}
where $N_k \in \N$, $c^k_i >0$ and $\mu^k_i \in \Ext(B^\alpha)$ pairwise distinct, for every $i$.  At each iterate $k \in \N$, the algorithm performs two steps: the insertion step and the weights optimization step. In the insertion step a suitable extremal point of $B^\alpha$, denoted by $\mu_{N_k + 1}^k$,  is chosen in such a way that it minimizes a variational problem obtained from \eqref{eq:inverseproblemconditional} by linearizing the fidelity term $\frac{1}{2}\|K\mu - f\|_Y^2$ in the previous iterate $\mu^k$.
In the weights optimization step the coefficients of the conical combination $\left[\sum_{i=1}^{N_k} c^k_i \mu^k_i\right] + c  \mu_{N_k + 1}^k$ are optimized with respect to $\mathcal{G}_\alpha$. The next iterate is then constructed by defining
\begin{align*}
\mu^{k+1} = \left[\sum_{i=1}^{N_k} \hat c^k_i \mu^k_i\right] + \hat c^k_{N_k + 1}  \mu_{N_k + 1}^k\,,
\end{align*}
where $(\hat c^k_1, \ldots, \hat c^k_{N_k}, \hat c^k_{N_k + 1}) \in (0,+\infty)^{N_k + 1}$ is the solution of the weights optimization step. In the following part of this section we will provide more details on the insertion step and the coefficient optimization step for our problem. Then, we will formulate the generalized conditional gradient algorithm for the lifted inverse problem, see Algorithm \ref{alg:liftedalgorithm}, and we discuss how to equivalently reformulate this algorithm to compute minimizers of \eqref{eq:inverseproblem} and \eqref{eq:inverseproblem2}, see Algorithm \ref{alg:corealgorithm}.

\subsubsection{Insertion step}\label{sec:insertions}

Given a sparse iterate $\mu^k \in \mathcal{M}(S;X)$ that is a conical combination of extremal points of $B^\alpha$ as in \eqref{eq:gcgiterate}, 
we define the dual variable associated to the $k$-th iteration $p^k \in C(S;\predual{X})$ as
\begin{equation}\label{eq:dualvar}
p^k = -K_*(K\mu^k - f) \,.
\end{equation}
Such dual variable $p^k$ represents the negative derivative of the fidelity term in of $\mathcal{G}_\alpha$ defined in \eqref{eq:classical} computed at the point $\mu^k$.
We then find the new extremal point  of $B^\alpha$ by solving the linearized problem defined as follows.
\begin{dfnz}\label{def:linproblem}
With $p^k$ as in \eqref{eq:dualvar} we define the linearized problem in $\mu^k \in \mathcal{M}(S;X)$ as 
\begin{equation}\label{eq:dualmin}
\max_{\substack{\mu \in \Ext(B^\alpha)}} \langle p^k,\mu\rangle\,.
\end{equation}
\end{dfnz}
Note that the linearized problem in Definition \ref{def:linproblem} is, in our setting, equivalent to  maximizing  the linearization of the objective function as in the standard Frank--Wolfe algorithm \cite{frank1956algorithm}:
\begin{align}\label{eq:originallinearization}
\max_{\mu \in B^\alpha}\, \langle p^k,\mu\rangle \,.   
\end{align}
Indeed, a simple convexity argument shows that it is always possible to find a maximizer of \eqref{eq:originallinearization} in the set of extremal points of $B$; see for example \cite[Lemma 3.1]{bredies2021linear}.
As a consequence, thanks to Proposition \ref{prop:convexityintegral} and Theorem \ref{thm:lsc}, the problem \eqref{eq:dualmin} admits a solution.   

Rephrasing the insertion step as in Definition \ref{def:linproblem} allows to constrain the iterate of our conditional gradient method to be a linear combination of the extremal points of $B^\alpha$, highlighting the sparsity promoting structure of the regularizer. As a consequence,
we can choose $\mu^k_{N_k +1} \in \argmax_{\substack{\mu \in \Ext(B^\alpha)}} \langle p^k,\mu\rangle$ to be the new extremal point added to the linear combination \eqref{eq:gcgiterate}.



\subsubsection{Weights optimization step}
In the insertion step we have selected the new extremal point $\mu^k_{N_k +1} \in \argmax_{\substack{\mu \in \Ext(B^\alpha)}} \langle p^k,\mu\rangle$ to add to the iterate $\mu^k$. The goal of the weights optimization step is to optimize the coefficients of the sparse conical combination $\left[\sum_{i=1}^{N_k} c^k_i \mu^k_i\right] + c \mu^k_{N_k +1}$ with respect to the energy $\mathcal{G}_\alpha$, i.e. to solve
 \begin{equation}\label{eq:optimizationcoefficient-1}
\argmin_{c_i \in \R_+^{N_k+1}} \frac{1}{2}\left\|\sum_{i=1}^{N_k + 1} c_i K\mu_i^k- f\right\|_Y^2 +  \alpha\hat F \left(\sum_{i=1}^{N_k+1} c_i \mu_i^k \right)\,.
\end{equation}
In practice (see \cite[Section 3]{bredies2021linear} for more details), it is enough to optimize the following upper bound of \eqref{eq:optimizationcoefficient-1}:
\begin{equation}\label{eq:optimizationcoefficient}
\hat c^k \in \argmin_{c_i \in \R_+^{N_k+1}} \frac{1}{2}\left\|\sum_{i=1}^{N_k+1} c_i K\mu_i^k - f\right\|_Y^2 +  \alpha\sum_{i=1}^{N_k+1} c_i \hat F (\mu_i^k)\,,
\end{equation}
where the bound $\hat F \left(\sum_{i=1}^{N_k+1} c_i \mu_i^k \right) \leq  \sum_{i=1}^{N_k+1} c_i \hat F (\mu_i^k)$ is ensured by the convexity and the positive one-homogeneity of $\hat F$. 
Note that, since $\hat F$ is positively one-homogeneous, we have $\hat F (\mu_i^k) \in \{0,\alpha^{-1}\}$. Therefore, \eqref{eq:estcoercivity} in Theorem \ref{prop:convexityintegral} implies that, if all inserted extremal points are different from zero, then the insertion step can be rewritten as
\begin{equation}\label{eq:optimizationcoefficient2}
\hat c^k \in \argmin_{c_i \in \R_+^{N_k+1}} \frac{1}{2}\left\|\sum_{i=1}^{N_k+1} c_i K\mu_i^k - f\right\|_Y^2 +  \sum_{i=1}^{N_k+1} c_i\,.
\end{equation}
Since we cannot exclude that all inserted extremal points are different from zero at all iterations, we will employ the optimization problem in \eqref{eq:optimizationcoefficient-1}. However, 
we refer to Remark \ref{rem:nonzero}, where we discuss 
in details how and when the insertion of null extremal points can be avoided.

\begin{rem}
We point out that \eqref{eq:optimizationcoefficient} can be a strict upper bound for \eqref{eq:optimizationcoefficient-1} depending on the choice of the functionals $J(\cdot,s)$. To demonstrate this, take $\mu^k = \sum_{i=1}^{N_k+1} c^k_i \mu^k_i$, where $N_k \in \N$, $c^k_i >0$ and $\mu^k_i \in \Ext(B^\alpha)$ pairwise distinct, for every $i$. Thanks to Theorem \ref{thm:extremal}, it holds that $\mu_i^k = v_i^k \delta_{s^k_i}$ where $s^k_i \in S$ and $v_i^k \in \Ext(B^\alpha_{s^k_i})$ for $i = 1, \ldots, N_k+1$. 
Using Proposition \ref{prop:convexityintegral}, we can write
\begin{align}\label{eq:lsa}
   \hat F\left(\sum_{i=1}^{N_k+1} c_i \mu_i^k \right) & = \hat F\Bigg(\sum_{s \in \{s^k_i : i =1, \ldots, N_k+1\}} \Bigg(\sum_{j\in \{1,\ldots, N_k +1 :\, s^k_j = s\}} c_j v_j^k \Bigg) \delta_{s} \Bigg) \nonumber\\
   & =  \sum_{s \in \{s^k_i : i =1, \ldots, N_k+1\}} J\Bigg(\sum_{j\in \{1,\ldots, N_k+1 :\, s^k_j = s\}} c_j v_j^k, s \Bigg)\,.
\end{align}
In particular, \eqref{eq:lsa} shows that if for every $s \in S$ the functional $J(\cdot,s)$ is linear on conical combinations of elements in $\Ext(B^\alpha_s)$, then  \eqref{eq:optimizationcoefficient} is equivalent to \eqref{eq:optimizationcoefficient-1}. However, this is in general not true even for simple examples such as $J(v,s) = \|v\|_{L^2}$.
\end{rem}

Owing to the linearity of $K$, the optimization in \eqref{eq:optimizationcoefficient} is a convex quadratic problem in $\R_+^{N_k+1}$ and therefore can be solved efficiently with standard solvers.
We then construct the next iterate $\mu^{k+1} \in \mathcal{M}(S;X)$ as 
$\mu^{k+1} = \sum_{i=1}^{N_k+1} \hat c^k_i \mu^k_i$, where $\hat c^k_i$ are the optimal weights determined as in \eqref{eq:optimizationcoefficient-1}. 
Algorithm \ref{alg:liftedalgorithm} summarizes these steps.

\subsubsection{Generalized conditional gradient method for the lifted problem} 

 Algorithm \ref{alg:liftedalgorithm} is a generalized conditional gradient method that computes a minimizer of the lifted inverse problem \eqref{eq:inverseproblemconditional}. Note that we keep track of the extremal points in the representation of the iterate $\mu^k \in \mathcal{M}(S;X)$ in the set $\mathcal{A}^k$. This is useful because during the weight optimization step some of the optimal $\hat c^k_i \in \R_+$ could be zero. If this is the case we want to remove the corresponding extremal point from the linear combination. Such operation is performed by removing from $\mathcal{A}^{k+1}$ the extremal points $\mu^k_i$ whose corresponding weight $\hat c^k_i$ is equal to zero.
 
We choose, for simplicity, to initialize Algorithm \ref{alg:liftedalgorithm} with $\mu^0 = 0$ and $\mathcal{A}^0 = \emptyset$. However, we point out that it is possible to have, as initialization, any sparse measure $\mu^0 = \sum_{i=1}^{N_0} c^0_i \mu^0_i$ with $c_i >0$ and $\mathcal{A}^0 = \{\mu^0_1, \ldots, \mu^0_{N_0}\}$.

It is possible to add a stopping criterion to Algorithm \ref{alg:liftedalgorithm} as in \cite[Proposition 3.3]{bredies2021linear}. In particular, we set the algorithm to terminate at the $k$-th iteration for $k \geq 1$ if 
 \begin{align}\label{eq:stopp}
 \max_{\substack{\mu \in \Ext(B^\alpha)}} \langle p^k,\mu\rangle \leq 1
 \end{align}
 or if the newly inserted atom is already among the atoms composing $\mu^k$, i.e., $\mu^k_{N_k+1} \in \mathcal{A}^k$. 

 \begin{rem}\label{rem:nonzero}
 Note that the stopping criterion \eqref{eq:stopp} is met, in particular, if 
 \begin{align}\label{eq:zeroin}
     0 \in \argmax_{\substack{\mu \in \Ext(B^\alpha)}} \, \langle p^k,\mu\rangle\,.
 \end{align}
Therefore, for all the iterations $k \geq 1$ the algorithm is never inserting $0$, since in this case the algorithm would stop.  
At $k = 0$, the stopping criterion is not given; however, by initializing Algorithm  \ref{alg:liftedalgorithm} with the zero measure, one has that $p^0 = K_*f$ and thus, $0 \in \argmax_{\substack{\mu \in \Ext(B^\alpha)}} \langle p^0,\mu \rangle$ implies that $0$ is a minimizer of \eqref{eq:classical} due to general optimality conditions derived, for example, in \cite[Proposition 2.3]{bredies2021linear} and the equivalence between \eqref{eq:dualmin} and \eqref{eq:originallinearization} \cite[Lemma 3.1]{bredies2021linear}. 
If we initialize Algorithm  \ref{alg:liftedalgorithm} with a general sparse measure $\mu^0 = \sum_{i=1}^{N_0} c^0_i \mu^0_i$ with weights $c^0_i$ that are optimal with respect to the weights optimization step, the stopping criterion can be also given for the first iteration $k=0$ and the insertion of zero would cause the algorithm to stop.
Due to these considerations, from now on we will assume that $0$ is never inserted in the linear combination at any iteration.
 \end{rem}

 \begin{algorithm}[ht!]
  \caption{Generalized conditional gradient method for the lifted inverse problem}\label{alg:liftedalgorithm}
  \begin{algorithmic}
    \STATE Initialize: $\mu_0 = 0$,  $\mathcal{A}^0 = \emptyset$.
    \STATE \textbf{for} \(k=0,1,2\dots \ \textbf{do}\)
  \STATE \qquad $p^k \leftarrow -K_* (K\mu^k - f)$
  \STATE \qquad \(\mu^k_{N_k +1} \in \argmax_{\substack{\mu \in \Ext(B^\alpha)}} \langle p^k,\mu\rangle \)
   \STATE \qquad \textbf{if}  $k \geq 1$, $\langle p^k,\mu^k_{N_k +1}\rangle  \leq 1$ or $\mu^k_{N_k +1} \in \mathcal{A}^k$ \textbf{then}
      \STATE \qquad \qquad \textbf{return} $\mu^k$
        \STATE \qquad  \textbf{end if}
        \STATE \qquad $\hat c^k \in  \argmin_{c \in \R_+^{N_k+1}}\frac{1}{2}\|\sum_{i=1}^{N_k+1} c_i K\mu_i^k - f\|_Y^2 + \alpha \sum_{i=1}^{N_k+1} c_i \hat F(\mu_i^k)$
    \STATE \qquad $\mu^{k+1} \leftarrow  \sum_{i=1}^{N_k+1} \hat c^k_i \mu^k_i$
   \STATE \qquad $\mathcal{A}^{k+1} \leftarrow \{\mu_i^k : \hat c^k_i > 0\}$, \ $N_{k+1} \leftarrow \# \mathcal{A}^{k+1}$ 
    \STATE \qquad $k \leftarrow k+1$
 \STATE \textbf{end for} 
 \STATE \textbf{return} $\mu^k$ 
  \end{algorithmic}
  \end{algorithm}

\subsubsection{Generalized conditional gradient method for the infinite infimal convolution functional}

Using the characterization of extremal points of $B^\alpha$ from Theorem \ref{thm:extremal} together with Lemma \ref{lem:predual}, we can rewrite Algorithm \ref{alg:liftedalgorithm} in the space $X \times \mathcal{M}_+(S)$ to obtain an implementable reformulation and a sequence of iterations that converges to a minimizer of \eqref{eq:inverseproblem}, see Section \ref{sec:convergence}.
By Theorem \ref{thm:extremal} the iterate $\mu^k$ can be written as
\begin{equation}\label{eq:linearcombination}
\mu^k = \sum_{i=1}^{N_k} c^k_i v^k_i \delta_{s^k_i}\,,
\end{equation}
with $N_k \in \N$, $c^k_i > 0$, $s^k_i \in S$ and $v^k_i \in \Ext(B^\alpha_{s_i^k})\smallsetminus\{0\}$ for $i=1,\ldots, N_k$, where $v^k_i$ are different from zero due to Remark \ref{rem:nonzero}. 
%
%
By applying \cref{lem:predual}, the dual variable $p^k \in C(S;\predual{X})$ defined in \eqref{eq:dualvar} can be rewritten as
\begin{equation}\label{eq:constantdual0}
p^k(s) = -A_* (AT \mu^k - f)  \quad \forall s \in S\,,
\end{equation}
and thus, applying the definition of $T$ we get
\begin{equation}\label{eq:constantdual}
p^k(s) = -A_* \left( \sum_{i=1}^{N_k} c^k_i A v^k_i - f\right)  \quad \forall s \in S\,.
\end{equation}
We choose a new extremal point of $B^\alpha$, denoted by $v^k_{N_k+1}\delta_{s^k_{N_k+1}}$, to be a conical combination \eqref{eq:linearcombination} that minimizes the linearized problem defined in Definition \ref{def:linproblem}. More precisely, using \eqref{eq:constantdual}, the insertion step can be written as follows
\begin{align*}
\max_{\mu \in \Ext(B^\alpha)} \,  \langle p^k,\mu\rangle & =\max_{\substack{(v,s)\in X \times S,\\ v\in  \Ext(B^\alpha_s)}} \langle p^k, v \delta_s \rangle  = \max_{\substack{(v,s)\in X \times S,\\ v\in  \Ext(B^\alpha_s)}}   \langle -A_* \left( \sum_{i=1}^{N_k} c^k_i A v^k_i - f\right), v \rangle\,.
\end{align*}
Therefore, in the insertion step we seek to solve 
\begin{equation}\label{eq:insertion_step2}
(v^k_{N_k+1},s^k_{N_k+1}) \in \argmax_{\substack{(v,s)\in X \times S,\\ v\in  \Ext(B^\alpha_s)}}   \langle -A_* \left( \sum_{i=1}^{N_k} c^k_i A v^k_i - f\right), v \rangle \,.
\end{equation}

\begin{rem}\label{rem:comp}
We remark that the insertion step is usually the main computational bottleneck of every generalized conditional gradient method, also in finite dimensions. Not surprisingly the same computational challenges affect \eqref{eq:insertion_step2}. On one hand, the implementability of \eqref{eq:insertion_step2} relies on an explicit characterization of $\Ext(B^\alpha_s)$, that, depending on the regularizer $J$, could be a challenging task. Moreover, even if such characterization is available, the optimization problem \eqref{eq:insertion_step2} is neither convex or concave and therefore the computation of its global maximizer is usually problematic, above all if $S$ and $\Ext(B^\alpha_s)$ are high dimensional. In our numerical experiments, c.f. Section \ref{sec:numerics}, we consider Fourier-type regularizers that have a simple characterization for $\Ext(B^\alpha_s)$ and allow for a closed formula for
\begin{align}\label{eq:rr}
\argmax_{\substack{v\in  \Ext(B^\alpha_s)}}   \langle -A_* \left( \sum_{i=1}^{N_k} c^k_i A v^k_i - f\right), v \rangle 
\end{align}
for every $s \in S$. Even with this simplifying choice, \eqref{eq:insertion_step2} remains computational challenging.
 We refer to Section \ref{sec:computationalissues} for more details about our computational choices for the specific numerical examples contained in this paper and the limitations we encountered.
\end{rem}

The weights optimization step can be also rewritten as the quadratic problem
\begin{equation}\label{eq:optimizationcoefficient2_}
\hat c^k \in \argmin_{c \in \R_+^{N_k+1}} \frac{1}{2}\left\|\sum_{i=1}^{N_k+1} c_i Av^k_i - f\right\|_Y^2 +  \alpha\sum_{i=1}^{N_k+1} c_i J(v^k_i,s^k_i)
\end{equation}
and the next iterate $\mu^{k+1} \in \mathcal{M}(S;X)$ is produced as
$\mu^{k+1} = \sum_{i=1}^{N_k+1} \hat c^k_i v^k_i \delta_{s^k_i}$.

In order to rewrite Algorithm \ref{alg:liftedalgorithm} in $X \times \mathcal{M}_+(S)$, we define the iterate $(v^k , \sigma^k) \in X \times \mathcal{M}_+(S)$ as 
 $v^{k} = \sum_{i = 1}^{N_{k}} c_i^{k} v_i^{k}$ and 
 $\sigma^k = \sum_{s \in \{s^k_i : \,i =1,\ldots,N_k\}} \big\|\sum_{j \in \{1,\ldots, N_k :\, s^k_j = s\}}   c^k_j v^k_j\big\|_X\delta_{s}$.
In Algorithm \ref{alg:corealgorithm} we summarize the generalized conditional gradient algorithm we use to solve \eqref{eq:classical}. 
Similarly to Algorithm \ref{alg:liftedalgorithm}, we keep track of the pairs $(v^k_i, s^k_i)$ that represent $v^k$ and $\sigma^k$
by defining the sets $\Xi^k = \{(v^k_1, s^k_1), \ldots, (v^k_{N_k}, s^k_{N_k})\}$. At each iteration we add the solution of \eqref{eq:insertion_step2} to $\Xi^k$ and, after the weights optimization step, we remove the pairs $(v^k_i, s^k_i)$ for which the corresponding weights $\hat c^k_i$ are equal to zero.

Thanks to Proposition \ref{prop:equivalence} and Theorem \ref{thm:extremal}, the stopping criterion we used in Algorithm \ref{alg:liftedalgorithm} can be  reformulated for Algorithm \ref{alg:corealgorithm} as follows. We set the algorithm to terminate at the $k$-th iteration, for $k\geq 1$, if 
 \begin{align}\label{eq:stoppingspec}
\argmax_{\substack{(v,s)\in X \times S,\\ v\in  \Ext(B^\alpha_s)}} \langle p^k, v\rangle \leq 1\,.
 \end{align}

  \begin{algorithm}[ht!]
  \caption{Generalized conditional gradient method for infinite infimal convolutions}\label{alg:corealgorithm}
  \begin{algorithmic}
    \STATE Initialize: $v^0 = 0$, $\sigma^0 = 0$,
$\Xi^0 = \emptyset$

    \STATE \textbf{for} \(k=0,1,2\dots \ \textbf{do}\)
  \STATE \qquad $p^k \leftarrow  -A_* \left( A v^k- f\right)$
  \STATE \qquad \((v^k_{N_k+1}, s^k_{N_k+1}) \in  \argmax_{\substack{(v,s)\in X \times S,\\ v\in  \Ext(B^\alpha_s)}} \langle p^k, v\rangle\) 
     \STATE \qquad \textbf{if}  $k \geq 1$, $\langle p^k,v^k_{N_k+1}\rangle  \leq 1$ or $(v^k_{N_k+1}, s^k_{N_k+1}) \in  \Xi^k$ \textbf{then}
      \STATE \qquad \qquad \textbf{return} $(v^k,\sigma^k)$
         \STATE \qquad  \textbf{end if}
        \STATE \qquad $(\hat c^k_1, \ldots, \hat c^k_{N_k +1}) \in \argmin_{c \in \R_+^{N_k+1}}\frac{1}{2}\|\sum_{i=1}^{N_k+1} c_i Av^k_i - f\|_Y^2 +  \alpha \sum_{i=1}^{N_k+1} c_i J(v^k_i,s^k_i)$
        \STATE \qquad $v^{k+1} \leftarrow \sum_{i = 1}^{N_{k}+1} \hat c^k_i v_i^{k}$, \ $\sigma^{k+1} \leftarrow  \sum_{s \in \{s^k_i : \,i =1,\ldots,N_k+1\}} \big\|\sum_{j \in \{1,\ldots, N_k +1 :\, s^k_j = s\}}   \hat c^k_j v^k_j\big\|_X\delta_{s}$
         \STATE \qquad $\Xi^{k+1}  \leftarrow \big\{(v_i^k, s_i^k) \in \Xi^{k} \cup \{(v^k_{N_k+1},s^k_{N_k+1})\} : \hat c^k_i >0\big\}$
         \STATE \qquad $N_{k + 1} \leftarrow \# \Xi^{k+1}$, \ $k \leftarrow k+1$
   \STATE  \textbf{end for}
   \STATE  \textbf{return} $(v^k,\sigma^k)$
  \end{algorithmic}
  \end{algorithm}

\subsection{Sublinear convergence}\label{sec:convergence}

The goal of this section is to prove sublinear convergence for the iterates generated by Algorithms~\ref{alg:liftedalgorithm} and~\ref{alg:corealgorithm}. To this end, we define the functional distance associated to the objective functional $\mathcal{G}_\alpha$, see \eqref{eq:inverseproblemconditional}, as
 \begin{equation} \label{def:residual}
  r_{\mathcal{G}_\alpha}(\mu):=\mathcal{G}_\alpha(\mu) - \min_{\tilde \mu \in \mathcal{M}(S;X)} \mathcal{G}_\alpha(\tilde \mu) \,
  \end{equation}
  for all $\mu \in \mathcal{M}(S;X)$ and the functional distance associated to the original inverse problem \eqref{eq:inverseproblem} as
  \begin{equation} \label{def:residualinverse}
  r_{\mathcal{F}_\alpha}(v,\sigma):=\mathcal{F}_\alpha(v,\sigma) - \min_{\substack{\tilde v \in X, \\ \tilde \sigma \in \mathcal{M}_+(S)}} \mathcal{F}_\alpha(\tilde v, \tilde \sigma) \,
  \end{equation}
  for all $v \in X$ and $\sigma \in \mathcal{M}_+(S)$,
  where 
  \begin{align*}
  \mathcal{F}_\alpha(v,\sigma) :=  \frac{1}{2}\|Av - f\|^2_Y + \alpha R(v,\sigma)\,.
  \end{align*}
The following convergence result for $r_{\mathcal{G}_\alpha}$ holds.  
   \begin{thm} \label{thm:convergence}
 Suppose that hypotheses \ref{ass:convexlsc}, \ref{ass:lsc} and \ref{ass:coercivity} hold. Additionally  assume that
 \begin{enumerate}[label=(I\arabic*)]
\item \label{ass:domain} for
every sequence $\{v^k\}_k$ in $X$ such that there exists  $\sigma^k \in \mathcal{M}_+(S)$ for which $v^k \in \domain (R(\cdot,\sigma^k))$ and $v^k \weakstar v$, $v \in X$, it holds that $\lim_{k\rightarrow +\infty} Av^k = Av$.
 \end{enumerate}
 Let $\{\mu^k\}_k$ be a sequence generated by Algorithm \ref{alg:liftedalgorithm}. Then either Algorithm \ref{alg:liftedalgorithm} terminates after a finite number of steps returning a minimizer of $\mathcal{G}_\alpha$, or  there exists $C>0$ depending only on $f$ such that 
    \begin{equation}\label{eq:decaytheorem}
    r_{\mathcal{G}_\alpha}(\mu^k) \leq \frac{C}{k+1} \,, \,\, \text{ for all } \,\, k \in \N\,.
    \end{equation}
Moreover, each accumulation point $\mu^*$ of $\{\mu^k\}_k$ with respect to the weak* topology of $\mathcal{M}(S;X)$ is a minimizer for $\mathcal{G}_\alpha$. If $\mathcal{G}_\alpha$ admits a unique minimizer $\mu^*$, then $\mu^k \weakstar \mu^*$ along the whole sequence.
  \end{thm}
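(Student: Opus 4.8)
The plan is to recognise \eqref{eq:inverseproblemconditional} as an instance of the composite problem $\min_\mu f(K\mu)+g(\mu)$ for which the abstract generalized conditional gradient analysis of \cite{bredies2021linear} applies, and then to reproduce the quantitative descent recursion in our one-homogeneous setting. Here $f(y)=\tfrac12\|y-f\|_Y^2$ is convex and Fr\'echet differentiable with $1$-Lipschitz gradient $\nabla f(y)=y-f$, while $g:=\alpha\hat F$ is convex, positively one-homogeneous (\cref{prop:convexityintegral}), weak*-lsc (\cref{thm:lsc}) and coercive (\eqref{eq:estcoercivity}). Coercivity makes every $g$-sublevel set bounded in total variation, hence weak*-compact by Banach--Alaoglu and \cref{thm:duality}; \cref{thm:extremal} identifies $\Ext(B)$, which is what renders the insertion step \eqref{eq:dualmin} solvable; and \cref{lem:predual} exhibits $p^k$ as a genuine element of $C(S;\predual{X})$. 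Existence of a minimizer, against which $r_{\mathcal{G}_\alpha}$ is measured, is \cref{prop:existencelifting}.

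The single hypothesis of \cite{bredies2021linear} not yet in hand is weak*-to-strong continuity of the forward map on sublevel sets, and securing it is where I expect the only real subtlety to lie; this is exactly the purpose of \ref{ass:domain}. Indeed, if $\mu^k\weakstar\mu$ with $\sup_k\hat F(\mu^k)<\infty$, then \cref{lem:predual} gives $T\mu^k=\mu^k(S)\weakstar\mu(S)=T\mu$ in $X$; putting $v^k:=\mu^k(S)$ and $\sigma^k:=|\mu^k|$ one has $v^k\in\domain R(\cdot,\sigma^k)$, so \ref{ass:domain} forces $Av^k\to Av$ strongly, i.e.\ $K\mu^k\to K\mu$ in $Y$. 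Hence $\mu\mapsto f(K\mu)$ is weak*-continuous on sublevel sets with weak*-to-strongly continuous gradient $\mu\mapsto K_*(K\mu-f)$, so all standing assumptions of the abstract theorem are met.

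For the rate \eqref{eq:decaytheorem} I would run the Frank--Wolfe recursion directly. Writing $r^k=K\mu^k-f$ and $\Phi_k:=\max_{\mu\in\Ext(B)}\langle p^k,\mu\rangle$, convexity of $f$ and the dual inequality $\langle p^k,\mu^*\rangle\le\Phi_k\,g(\mu^*)$ give the gap estimate $r_{\mathcal{G}_\alpha}(\mu^k)\le(\Phi_k-1)\,g(\mu^*)$; it is cleanest to phrase this for the residual $\tilde r_k\ge r_{\mathcal{G}_\alpha}(\mu^k)$ of the separable majorant \eqref{eq:optimizationcoefficient}, because optimality of the weight step forces $\langle p^k,\mu_i\rangle=1$ on each active atom and thereby cancels all cross terms, leaving $\tilde r_k\le(\Phi_k-1)\,g(\mu^*)$ without spurious remainders. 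Testing the weight subproblem against the admissible curve $(1-t)\mu^k+t\gamma\psi^k$ (with $\psi^k$ the inserted atom) and using the exact second-order expansion of the quadratic fidelity yields $\tilde r_{k+1}\le\tilde r_k-t\gamma(\Phi_k-1)+\tfrac{t^2}{2}\|\gamma K\psi^k-K\mu^k\|_Y^2$. Since all iterates and atoms remain in a fixed weak*-compact sublevel set and $K$ is bounded, the curvature term is uniformly bounded by some $L_0$; choosing $\gamma=g(\mu^*)$ (admissible in the analysis though not computed by the algorithm) and optimizing over $t\in[0,1]$ gives the classical recursion $\tilde r_{k+1}\le\tilde r_k-\tilde r_k^2/(2L_0)$, whence $\tilde r_k\le C/(k+1)$ by induction and $r_{\mathcal{G}_\alpha}(\mu^k)\le\tilde r_k$. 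Tracking constants, $g(\mu^*)=\alpha\hat F(\mu^*)\le\mathcal{G}_\alpha(0)=\tfrac12\|f\|_Y^2$ and $L_0$ depend only on $f$, as claimed.

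It remains to treat the limit behaviour and finite termination. Because $\{\mu^k\}_k$ stays in a weak*-compact set it possesses weak* accumulation points, and for any such $\mu^*$ the weak*-lsc of $\mathcal{G}_\alpha$ (from \cref{thm:lsc} and the weak*-continuity of the fidelity established via \ref{ass:domain}) together with $\mathcal{G}_\alpha(\mu^k)\to\min\mathcal{G}_\alpha$ gives $\mathcal{G}_\alpha(\mu^*)\le\liminf_k\mathcal{G}_\alpha(\mu^k)=\min\mathcal{G}_\alpha$, so $\mu^*$ is a minimizer. Uniqueness then upgrades this to weak* convergence of the whole sequence by the usual subsequence principle. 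Finally, if the stopping test \eqref{eq:stopp} fires, then $\Phi_k\le1$ is precisely the first-order optimality condition for \eqref{eq:classical}, so the returned $\mu^k$ already minimizes $\mathcal{G}_\alpha$, which is the finite-termination alternative.
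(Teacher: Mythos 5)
Your proposal is correct and follows essentially the same route as the paper: both verify the standing assumptions (A1)--(A3) of the abstract generalized conditional gradient analysis in \cite{bredies2021linear} -- in particular deriving weak*-to-strong continuity of $K$ on $\domain\hat F$ from \ref{ass:domain} via $v^k=\mu^k(S)$, $\sigma^k=|\mu^k|$ -- and then conclude via \cite[Theorem 3.4]{bredies2021linear}. The only difference is that the paper simply cites that theorem, whereas you additionally unpack its internal descent recursion and limiting arguments; this is consistent with, not divergent from, the paper's argument.
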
  
\begin{proof}  
The proof of \cref{thm:convergence} proceeds by specializing  \cite[Theorem 3.4]{bredies2021linear} in our setting for the generalized conditional gradient method applied to the functional $\mathcal{G}_\alpha$. We need to verify that the assumptions $(A1), (A2), (A3)$ provided in Section $2$ in \cite{bredies2021linear} hold in our setting.
First note that $\mathcal{M}(S;X)$ is a Banach space when equipped with the total variation norm and $C(S;\predual{X})$ is its predual (see \cref{thm:duality}). Additionally, the functional $\hat F : \mathcal{M}(S;X) \rightarrow [0,\infty]$ is convex, positively one-homogeneous, weakly-* lower semicontinuous and coercive in the sense that its sublevel sets are weakly-* compact (see \cref{prop:convexityintegral} and \cref{thm:lsc}).
This ensures the validity of $(A1)$ and $(A2)$. It remains to prove that the linear operator $K = A \circ T$, where $T\mu = \mu(S)$, is weak*-to-weak continuous and it satisfies assumption $(A3)$ in \cite{bredies2021linear} as a consequence of  \ref{ass:domain}. 
Applying Remark \ref{rem:consweakstar}, we immediately deduce that $K$ is weak*-to-weak continuous since $A$ is weak*-to-weak continuous.  Moreover, given a sequence $\{\mu^n\}_n$ in $\mathcal{M}(S;X)$ such that $\mu^n \in \domain(\hat F)$ we have that $\mu^n(S) \in \domain (R(\cdot,|\mu^n|))$. Indeed, by the definition of $R(\mu^n(S),|\mu^n|)$ and the fact that $\int_S \frac{\mu^n}{|\mu^n|}(s)\, d|\mu^n|(s) = \mu^n(S)$ we have that $R(\mu^n(S),|\mu^n|)  \leq \hat F(\mu^n)< \infty$. Therefore, considering a sequence $\{\mu^n\}_n$ in $\domain (\hat F)$ such that $\mu^n \weakstar \mu$, using Assumption \ref{ass:domain} and recalling Remark \ref{rem:consweakstar}, we conclude that $K\mu^n \rightarrow K\mu$ strongly. 
We can therefore apply Theorem 3.4 in \cite{bredies2021linear} and conclude the proof.
\end{proof}
\begin{rem}
Assumption (I1) is specifically used to satisfy Assumption (A3) in \cite[Theorem 3.4]{bredies2021linear}. In particular, we remark that Assumption (I1) is implied by the weak*-to-strong continuity of the operator $A$. However, it can be useful to require (I1) instead of simply assuming the weak*-to-strong continuity of $A$ in cases where the finiteness of the regularizer along a given sequence $v^k$ could improve the convergence behaviour of $Av^k \rightarrow Av$ to weak*-to-strong. An example could be found in inverse problems for dynamic measures regularized with the Benamou-Brenier energy \cite{bcfr}. Here the finiteness of the regularizer is implying the absolutely continuity of the curve of measures, allowing for operators $A$ to be defined pointwise in time.
\end{rem}

By applying Proposition \ref{prop:equivalence} and Theorem \ref{thm:convergence}, we can deduce from the previous theorem a convergence result for Algorithm \ref{alg:corealgorithm} as follows.
\begin{cor}\label{cor:convergencenotlifted}
Let 
\begin{equation}
v^k  = \sum_{i=1}^{N_k} c^k_i v^k_i \in X\, ,\qquad \sigma^k = \sum_{s \in \{s^k_i : \,i =1,\ldots,N_k\}} \bigg\|\sum_{j \in \{1,\ldots, N_k :\, s^k_j = s\}}   c^k_j v^k_j\bigg\|_X\delta_{s}
\end{equation}
be the iterates generated by Algorithm \ref{alg:corealgorithm}. If Algorithm \ref{alg:corealgorithm} terminates after a finite number of iterations returning $(v^*, \sigma^*) \in X\times \mathcal{M}_+(S)$, then $(v^*, \sigma^*)$ is a minimizer of $\mathcal{F}_\alpha$.
Moreover, there exists $C>0$ depending only on $f$ such that 
\begin{equation}
 r_{\mathcal{F}_\alpha}(v^k,\sigma^k) \leq \frac{C}{k+1}\,, \,\, \text{ for all } \,\, k \in \N\,.
\end{equation}
\end{cor}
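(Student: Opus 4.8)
The plan is to reduce everything to Theorem \ref{thm:convergence} by exploiting the fact that Algorithm \ref{alg:corealgorithm} is nothing but a faithful rewriting of Algorithm \ref{alg:liftedalgorithm} under the correspondence between sparse iterates $\mu^k \in \mathcal{M}(S;X)$ and pairs $(v^k,\sigma^k) \in X \times \mathcal{M}_+(S)$. Concretely, I would first set $\mu^k = \sum_{i=1}^{N_k} c_i^k v_i^k \delta_{s_i^k}$ and verify that this is exactly the iterate produced by Algorithm \ref{alg:liftedalgorithm}. For this I would check, step by step, that the two algorithms coincide: (i) the insertion step \eqref{eq:insertion_step2} matches $\max_{\mu \in \Ext(B)}\langle p^k,\mu\rangle$ because, by Theorem \ref{thm:extremal}, $\Ext(B) = \{v\delta_s\}$, and, by Lemma \ref{lem:predual} together with \eqref{eq:constantdual}, the dual variable $p^k$ is constant in $s$, so that $\langle p^k, v\delta_s\rangle = \langle -A_*(Av^k-f), v\rangle$; and (ii) the weight optimization steps agree, since $K\mu_i^k = A v_i^k$ and $\hat F(\mu_i^k) = \hat F(v_i^k \delta_{s_i^k}) = J(v_i^k,s_i^k)$ for each inserted extremal point. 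From the defining formulas it then follows that $v^k = \mu^k(S) = T\mu^k$ and $\sigma^k = |\mu^k|$.

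Next I would relate the two objective functionals along the iterates. Using $v^k = \mu^k(S)$ the fidelity terms coincide, and since $u = \mu^k/|\mu^k|$ is an admissible competitor in the infimum defining $R(\mu^k(S),|\mu^k|)$, Lemma \ref{lem:ins} yields $R(v^k,\sigma^k) \le \int_S J\!\left(\frac{\mu^k}{|\mu^k|}(s),s\right) d|\mu^k|(s) = \hat F(\mu^k)$, whence $\mathcal{F}_\alpha(v^k,\sigma^k) \le \mathcal{G}_\alpha(\mu^k)$. On the other hand, Proposition \ref{prop:equivalence} (together with the equivalence of \eqref{eq:inverseproblem} and \eqref{eq:inverseproblem2} discussed in Section \ref{sec:inverse}) guarantees that the two problems share the same infimum, so $\min \mathcal{G}_\alpha = \min_{v,\sigma}\mathcal{F}_\alpha$. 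Subtracting this common value gives $r_{\mathcal{F}_\alpha}(v^k,\sigma^k) \le r_{\mathcal{G}_\alpha}(\mu^k)$.

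With these two ingredients the conclusion is immediate. For the finite-termination case, if Algorithm \ref{alg:corealgorithm} stops and returns $(v^*,\sigma^*)$, then the corresponding $\mu^*$ is the output of Algorithm \ref{alg:liftedalgorithm}, which by Theorem \ref{thm:convergence} minimizes $\mathcal{G}_\alpha$; Proposition \ref{prop:equivalence} then turns $\mu^*$ into the minimizer $(\mu^*(S),|\mu^*|) = (v^*,\sigma^*)$ of $\mathcal{F}_\alpha$. For the rate, Theorem \ref{thm:convergence} supplies a constant $C>0$ depending only on $f$ with $r_{\mathcal{G}_\alpha}(\mu^k) \le C/(k+1)$, and combining this with $r_{\mathcal{F}_\alpha}(v^k,\sigma^k) \le r_{\mathcal{G}_\alpha}(\mu^k)$ delivers the claimed bound with the same constant.

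The main obstacle is the bookkeeping in the first paragraph: one must argue carefully that the correspondence $\mu^k \leftrightarrow (v^k,\sigma^k)$ is compatible not merely with the iterates but with every individual operation of the algorithm (insertion, weight optimization, atom pruning, and the stopping criterion \eqref{eq:stoppingspec}), so that the two algorithms genuinely generate corresponding sequences rather than sequences with merely comparable energies. Everything else transfers directly through Proposition \ref{prop:equivalence} and Theorem \ref{thm:convergence}; I note in particular that I am implicitly assuming the hypotheses \ref{ass:convexlsc}, \ref{ass:lsc}, \ref{ass:coercivity} and \ref{ass:domain} of Theorem \ref{thm:convergence}, which is what makes that theorem applicable here.
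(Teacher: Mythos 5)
Your proposal is correct and follows essentially the same route as the paper: identify the iterates of Algorithm \ref{alg:corealgorithm} with those of Algorithm \ref{alg:liftedalgorithm} via $\mu^k = \sum_i c_i^k v_i^k \delta_{s_i^k}$ (so that $v^k = \mu^k(S)$, $\sigma^k = |\mu^k|$), establish $\mathcal{F}_\alpha(v^k,\sigma^k) \le \mathcal{G}_\alpha(\mu^k)$ and the equality of infima via Proposition \ref{prop:equivalence} to get $r_{\mathcal{F}_\alpha}(v^k,\sigma^k) \le r_{\mathcal{G}_\alpha}(\mu^k)$, and then invoke Theorem \ref{thm:convergence}. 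The extra bookkeeping you flag about the two algorithms generating corresponding sequences is handled in the paper by the derivation of Algorithm \ref{alg:corealgorithm} from Algorithm \ref{alg:liftedalgorithm} in Section \ref{sec:gcg}, so your argument is complete.
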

\begin{proof}
If Algorithm \ref{alg:corealgorithm}  terminates after a finite number of steps returning $(v^*, \sigma^*) $, the proof follows from \cref{thm:convergence} by noticing that, given a minimizer for $\mathcal{G}_\alpha$ denoted by $\mu^* \in \mathcal{M}(S;X)$,  \cref{prop:equivalence} ensures that $(\frac{\mu^*}{|\mu^*|}, |\mu^*|)$ is a minimizer for \eqref{eq:inverseproblem2} and thus, $(\mu^*(S), |\mu^*|)$ is a minimizer for \eqref{eq:inverseproblem}.
Now note that 
\begin{align*}
\mathcal{F}_\alpha(v,\sigma) \leq \mathcal{G}_\alpha(\mu)
\end{align*}
for every $\mu$ such that $\mu(S) = v$ and $|\mu| = \sigma$. Therefore, applying \cref{prop:equivalence}, we deduce that 
\begin{equation}
r_{\mathcal{F}_\alpha}(v^k,\sigma^k) \leq r_{\mathcal{G}_\alpha}(\mu^k)\,.
\end{equation}
An application of \cref{thm:convergence} concludes the proof.
\end{proof}

%
%
%

\section{Illustrative applications and numerics}\label{sec:numerics}

The goal of this section is to provide illustrative examples of inverse problems with infinite infimal convolution regularization and to show that the generalized conditional gradient method introduced in Section \ref{sec:gcg} can be employed to solve them efficiently without discretizing the set $S$. In Section \ref{sec:1dadaptive} we consider the infimal convolution of fractional-Laplacian-type functionals with different fractional orders. In Section \ref{sec:anisotropic} we focus  on the infimal convolution of anisotropic fractional-Laplacian-type regularizers. 

All  experiments in this paper are carried out on Python3 on a MacBook Pro with 8 GB RAM and an Intel®Core™ i5, Quad-Core, 2.3 GHz.

\subsection{\texorpdfstring{Adaptively selecting the orders of $1$-dimensional fractional Laplacians in an\\ infinite infimal convolution}{Adaptively selecting the orders of 1-dimensional fractional Laplacians in an infinite infimal convolution}} \label{sec:1dadaptive}

In this section we apply our setting to an inverse problem regularized by the infimal convolution of fractional-Laplacian-type operators. We refer to  \cref{app:fraclap} for  basic definitions and results regarding $L^2$-periodic functions and fractional-Laplacian-type operators needed in this section.
Our goal is to define an infinite infimal convolution of functionals that regularizes a given observation by penalizing combinations of fractional order derivatives, where the orders themselves are learnt and depend on the frequencies of the observation. The higher the frequency of the observation, the lower the order of the regularization applied to that frequency. We use this regularizer to denoise periodic signals. The goal of this example is to assess the performance of our regularization functional in a simple setting and to see how the optimization procedure takes advantage of sparsity in the space of parameters.

\subsubsection{Setting}
We denote by $T^1$ the $1$-dimensional torus and  consider $S = [0,1]$ and $X = L_\circ^2(T^1)$
endowed with the $L^2$-norm. Here, $L_\circ^2(T^1)$ denotes the space of $L^2$-functions on $T^1$ with zero mean, see  \cref{sec:periodicl2}.
Notice that $X$ is a Hilbert space, being a closed subset of $L^2(T^1)$, and therefore self-dual by the Riesz representation theorem.
We  define the functional $J : L^2_\circ(T^1) \times [0,1] \rightarrow [0,+\infty]$ as follows
\begin{equation}\label{eq:fraclaplaadaptive}
\displaystyle J(v,s) := \left\{\begin{array}{ll}
\displaystyle \frac{1}{s^\eta}\sqrt{\sum_{m \in \Z} |m|^{4s} |\hat v(m)|^2} & \text{if } s > 0,\\
0 & \text{if } v = 0, s = 0,\\
+\infty & \text{otherwise,}
\end{array}    
\right.
\end{equation}
where $\eta > 0$ is a fixed parameter. Here and subsequently $\hat v(m)$ denotes the $m$-th Fourier coefficient of $v$. The infinite infimal convolution of the functionals $J(\cdot,s)$ has the desired effect of penalizing lower regularity. Indeed, due to the factor $1/ s^\eta$ in the definition of $J$,  
a higher smoothness in the observation receives a lower weight in the regularization.


\begin{rem}\label{rem:coercivityrem}
We remark that if $s>0$ and $v \in H^{2s}(T^1)$, then $J(v,s) =  \frac{1}{s^\eta} \frac{1}{\sqrt{2\pi}} \|(-\Delta^s) v\|_{L^2(T^1)}$ is the rescaled $L^2$-norm of the $s$-fractional Laplacian (see Remark \ref{rem:frac-Hs}). 
\end{rem}
We  consider the following regularizer given by an infinite infimal convolution of $J(\cdot,s)$
\begin{align*}
R(v, \sigma) & =  \inf \left\{\int_0^1 J(u(s),s) \,  d\sigma(s)  : u \in L^1(([0,1],\sigma);L^2_\circ(T^1)) \text{ with } \int_0^1 u(s)\, d\sigma(s) = v\right\}
\end{align*}
and the corresponding denoising problem 
\begin{equation}\label{eq:inv_frac_1dim}
\inf_{\substack{v\in  L_\circ^2(T^1), \\ \sigma\in \mathcal{M}_+(S)}} \frac{1}{2}\|v - f\|^2_{L^2(T^1)} + \alpha R(v, \sigma) \,,
\end{equation}
where $f \in L^2(T^1)$ is a given observation and $\alpha >0$.
We now verify that this problem satisfies the assumptions  made in Section  \ref{sec:inverse}.
\begin{lemma}\label{lem:verass1dlaplacian}
The functional $J$ defined in \eqref{eq:fraclaplaadaptive}
 satisfies \ref{ass:convexlsc}, \ref{ass:lsc} and \ref{ass:coercivity} in Section \ref{sec:inverse}.
\end{lemma}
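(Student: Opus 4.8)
The plan is to verify the three structural hypotheses \ref{ass:convexlsc}, \ref{ass:lsc}, \ref{ass:coercivity} directly from the Fourier-series representation of $J$, using throughout that $X = L^2_\circ(T^1)$ is a Hilbert space (so that its weak* and weak topologies coincide, cf.\ the self-duality noted above) and that every $v\in X$ has vanishing mean, i.e.\ $\hat v(0)=0$, so that only modes with $|m|\geq 1$ occur. For \ref{ass:convexlsc} I would argue separately for $s>0$ and $s=0$. For fixed $s>0$ the quantity $\sum_{m\in\Z}|m|^{4s}|\hat v(m)|^2$ is the squared norm induced by the inner product $(v,w)\mapsto \sum_m |m|^{4s}\hat v(m)\overline{\hat w(m)}$ on its (linear) domain $H^{2s}(T^1)$; hence $J(\cdot,s)$ is $1/s^\eta$ times a Hilbertian seminorm and is therefore convex and positively one-homogeneous (indeed absolutely one-homogeneous, as will also be needed in \cref{thm:representer}), and it is proper because trigonometric polynomials give finite values. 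For $s=0$, $J(\cdot,0)$ is the indicator of the convex set $\{0\}$, which is proper, convex and positively one-homogeneous.

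For \ref{ass:coercivity} I would use that $|m|^{4s}\geq 1$ for all $|m|\geq 1$ and all $s\geq 0$, together with $\hat v(0)=0$, to obtain the pointwise bound $\sum_m |m|^{4s}|\hat v(m)|^2 \geq \sum_m|\hat v(m)|^2 = \|v\|_{L^2}^2$. Since $s\in[0,1]$ and $\eta>0$ give $1/s^\eta\geq 1$ on $(0,1]$, this yields $J(v,s)\geq \|v\|_{L^2} = \|v\|_X$ for every $s$ (the case $s=0$ being trivial), so \ref{ass:coercivity} holds with $C=1$.

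The substance of the lemma is the joint lower semicontinuity \ref{ass:lsc}: I take $v^n\rightharpoonup v$ in $X$ and $s_n\to s$ in $[0,1]$ and must show $J(v,s)\leq \liminf_n J(v^n,s_n)$. The first observation is that weak convergence in $L^2$ forces $\hat v^n(m)\to\hat v(m)$ for every fixed $m$ (test against the characters). If $s>0$, then for each truncation level $M$ the continuity of $s\mapsto |m|^{4s}$ gives $\sum_{|m|\leq M}|m|^{4s}|\hat v(m)|^2 = \lim_n \sum_{|m|\leq M}|m|^{4s_n}|\hat v^n(m)|^2 \leq \liminf_n \sum_m |m|^{4s_n}|\hat v^n(m)|^2$; letting $M\to\infty$ (monotone convergence on the left) and using $1/s_n^\eta\to 1/s^\eta\in(0,\infty)$ yields $J(v,s)\leq\liminf_n J(v^n,s_n)$.

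The delicate case, and the main obstacle, is $s=0$, where the prefactor $1/s_n^\eta$ blows up and must be shown to dominate. If $v=0$ the inequality is trivial since $J\geq 0$. If $v\neq 0$ then $J(v,0)=+\infty$ and I must prove $J(v^n,s_n)\to+\infty$. Here I again use $|m|^{4s_n}\geq 1$ on the (nonzero) modes to bound $\sum_m |m|^{4s_n}|\hat v^n(m)|^2\geq\|v^n\|_{L^2}^2$, whence $J(v^n,s_n)\geq \|v^n\|_{L^2}/s_n^\eta$ (the values with $s_n=0$, $v^n\neq 0$ being $+\infty$, and those with $v^n=0$ occurring only finitely often since $v^n\rightharpoonup v\neq 0$). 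Weak lower semicontinuity of the norm gives $\liminf_n\|v^n\|_{L^2}\geq\|v\|_{L^2}>0$, so $\|v^n\|_{L^2}$ stays bounded away from $0$, while $1/s_n^\eta\to+\infty$; hence $J(v^n,s_n)\to+\infty$, completing \ref{ass:lsc}. The only point requiring care is to keep the two competing effects correctly ordered — the weights $|m|^{4s_n}\to 1$ pulling the sum down against the prefactor $1/s_n^\eta\to\infty$ pushing the product up — which the crude bound $|m|^{4s_n}\geq 1$ resolves cleanly.
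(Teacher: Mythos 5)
Your proof is correct and follows essentially the same route as the paper: \ref{ass:convexlsc} and \ref{ass:coercivity} are read off the Fourier representation (the paper phrases the bound $|m|^{4s}\ge 1$ on zero-mean modes as the Poincar\'e inequality for $(-\Delta^s)$; note that with the paper's normalization $\hat v(m)=\tfrac{1}{2\pi}\int v e^{-imx}\,dx$ Parseval gives $\sum_m|\hat v(m)|^2=\tfrac{1}{2\pi}\|v\|_{L^2}^2$, so the coercivity constant is $\sqrt{2\pi}$ rather than $1$ --- immaterial for \ref{ass:coercivity}), and for \ref{ass:lsc} both arguments combine pointwise convergence of the Fourier coefficients with Fatou's lemma (your truncation argument is just Fatou for counting measure) when $s>0$, and when $s=0$ play the divergence of $1/s_n^\eta$ against the lower bound $\sum_m|m|^{4s_n}|\hat v^n(m)|^2\gtrsim\|v^n\|_{L^2}^2$. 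Your $s=0$ case is merely the contrapositive of the paper's (the paper assumes the liminf is finite and concludes $v=0$, whereas you assume $v\neq 0$ and conclude the liminf is infinite), so the two proofs coincide in substance.
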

\begin{proof}
Assumption \ref{ass:convexlsc} is straightforward from the definition of $J(v,s)$ in \eqref{eq:fraclaplaadaptive}. Let us show Assumption \ref{ass:lsc}.
Let $\{(v^n, s_n)\}_n$ be a sequence in
$L_\circ^2(T^1) \times [0,1]$ such that $v^n \rightharpoonup v$ in $L_\circ^2(T^1)$ and $s_n \rightarrow s$.
Without loss of generality we can assume that 
\begin{align}\label{eq:fi}
\liminf_{n\rightarrow +\infty} J(v^n,s_n) = \lim_{n\rightarrow +\infty} J(v^n,s_n) < \infty\,.
\end{align}
In particular, for all but finitely many $n$ either $s_n>0$ or $(v^n,s_n) = (0,0)$. Since, if $(v^n,s_n) = (0,0)$ for all but finitely many the statement is clear, we can suppose without loss of generality that $s_n >0$ for every $n$.    
Note that $\lim_{n\rightarrow +\infty}|m|^{2s_n}\hat v^n(m) = |m|^{2s}\hat v(m)$ for every $m\in \Z$. Indeed, for every $m\in \Z$ it holds
\begin{align*}
    \lim_{n \rightarrow +\infty}|m|^{2s_n}\hat v^n(m) = \lim_{n \rightarrow +\infty} \frac{|m|^{2s_n}}{2\pi} \int_0^{2\pi} e^{-imx}v^n(x)\, dx = \frac{|m|^{2s}}{2\pi} \int_0^{2\pi}  e^{-imx}v(x)\, dx = |m|^{2s}\hat v(m)\,.
\end{align*}
Therefore, if $s>0$, by Fatou's Lemma we have 
\begin{align*}
   \liminf_{n\rightarrow +\infty} J(v^n,s_n) =  \liminf_{n\rightarrow +\infty} \frac{1}{s_n^\eta} \sqrt{\sum_{m \in \Z} |m|^{4s_n} |\hat v^n(m)|^2} \geq \frac{1}{s^\eta} \sqrt{\sum_{m \in \Z} |m|^{4s} |\hat v(m)|^2} = J(v,s)\,.
\end{align*}
In the case that $s = 0$, \eqref{eq:fi} implies
\begin{align*}
   0 \leq \lim_{n\rightarrow +\infty} \sqrt{\sum_{m \in \Z} |m|^{4s_n} |\hat v^n(m)|^2} \leq \left(\sup_{n \in \N} J(v^n,s_n)\right) \left( \lim_{n \rightarrow +\infty}s_n^\eta \right)= 0\,,
\end{align*}
and consequently $\lim_{n\rightarrow +\infty}\sum_{m \in \Z}|\hat v^n(m)|^2 = 0$. Since $v^n \rightharpoonup v$ we infer that $v = 0$ and 
$J(v,s) = 0$, showing that $\liminf_{n\rightarrow +\infty} J(v^n,s_n) \geq J(v,s)$.

Finally, let us verify Assumption \ref{ass:coercivity}. If $s >0$ and $v \in H^{2s}(T^1)$, then 
\begin{align}\label{eq:poiap}
J(v,s) =\frac{1}{s^\eta}\frac{1}{\sqrt{2\pi}} \|(-\Delta^s) v\|_{L^2(T^1)} \geq \frac{1}{\sqrt{2\pi}} \|v\|_{L^2(T^1)}  \,,  
\end{align}
thanks to Remark \ref{rem:frac-Hs} and the Poincar\'e inequality in Theorem \ref{thm:poinc}. If instead $v \in L^2(T^1) \smallsetminus H^{2s}(T^1)$ or $s=0$, the inequality $J(v,s) \geq \frac{1}{\sqrt{2\pi}} \|v\|_{L^2(T^1)}$ is immediate from the definition of $J(v,s)$.
\end{proof}

%

\subsubsection{The generalized conditional gradient method}
We now design a generalized conditional gradient method to solve \eqref{eq:inv_frac_1dim} following the general procedure outlined in Section \ref{sec:gcg}.
Define $B^\alpha_s = \{ v \in L^2_\circ(T^1) : \alpha J(v,s) \leq 1\}$. \cref{lem:extfractionallaplacian} with the choice $f(m) = \alpha^2|m|^{4s}$ provides the following characterization of the extremal points of $B^\alpha_s$:
\begin{align}\label{eq:ppp}
\Ext(B^\alpha_s) = \left\{ v\in L^2(T^1) : \alpha \frac{1}{s^\eta}\sqrt{\sum_{m \in \Z} |m|^{4s} |\hat v(m)|^2} = 1 \right\}\,.
\end{align}
Therefore, given an iterate $\mu^k = \sum_{i=1}^{N_k} c^k_i v^k_i \delta_{s^k_i}$ for $N_k \in \N$, $c_i^k >0$, $v_i^k \in \Ext(B^\alpha_s)$ and $s_i^k \in [0,1]$, the insertion step in \eqref{eq:insertion_step2} amounts to solving
\begin{equation}\label{eq:insertionlapl1dim}
\argmax_{\substack{s \in S, \\  v \in L^2_\circ(T^1)}} \Big\langle f -\sum_{i =1}^{N_k} c_i^k v_i^k ,v\Big\rangle_{L^2(T^1)} + \mathbbm{1}_{\{\frac{\alpha}{s^\eta}\sqrt{\sum_{m \in \Z} |m|^{4s} |\hat v(m)|^2} = 1\}}(v)\,.
\end{equation}
The following lemma provides an explicit formula for the inserted $v \in L_\circ ^{2}(T^1)$  for a fixed $s \in [0,1]$.

\begin{lemma}\label{lem:explixit1d}
Given $w \in L_\circ^2(T^1)$ and $s \in [0,1]$, consider the variational problem
\begin{equation}\label{eq:linearexplicit-1}
\max_{ v \in L^2_\circ(T^1)} \langle w,v\rangle_{L^2(T^1)} + \mathbbm{1}_{\{\frac{\alpha}{s^\eta}\sqrt{\sum_{m\in \Z} |m|^{4s}  |\hat  v(m)|^2} = 1\}}(v)\,.
\end{equation}
A solution of \eqref{eq:linearexplicit-1}  can be computed as 
\begin{equation}\label{eq:resull}
v^*(x) = \frac{1}{A}\sum_{m \in \Z\setminus \{0\}}|m|^{-4s}\hat w(m) e^{im\cdot x},
\end{equation}
where $A = \frac{\alpha}{s^\eta}\sqrt{\sum_{m \in \Z\setminus \{0\}} |m|^{-4s}|\hat w(m)|^2}$.
\end{lemma}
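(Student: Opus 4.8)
The plan is to read \eqref{eq:linearexplicit-1} as the maximization of the bounded linear functional $v \mapsto \langle w,v\rangle_{L^2(T^1)}$ over the weighted ``sphere'' $\Sigma_s := \{v \in L^2_\circ(T^1) : \tfrac{\alpha}{s^\eta}(\sum_{m\in\Z}|m|^{4s}|\hat v(m)|^2)^{1/2} = 1\}$, i.e.\ the set on which the constraint term vanishes, and to solve it by a Cauchy--Schwarz argument. First I would pass to Fourier coefficients: since $v \in L^2_\circ(T^1)$ has zero mean, $\hat v(0)=0$, so every series runs over $m \in \Z\setminus\{0\}$, where the weights $|m|^{4s}$ are strictly positive for $s>0$. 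By Plancherel's identity $\langle w,v\rangle_{L^2(T^1)} = 2\pi\sum_{m\neq 0}\hat w(m)\overline{\hat v(m)}$ (a real number for real $w,v$), while the constraint reads $\sum_{m\neq 0}|m|^{4s}|\hat v(m)|^2 = s^{2\eta}/\alpha^2$.

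Next, the substitution $\xi_m := |m|^{2s}\hat v(m)$ turns the objective into $2\pi\sum_{m\neq 0}(|m|^{-2s}\hat w(m))\overline{\xi_m}$ subject to $\sum_{m\neq 0}|\xi_m|^2 = s^{2\eta}/\alpha^2$. The Cauchy--Schwarz inequality in $\ell^2(\Z\setminus\{0\})$ then yields the sharp bound $\langle w,v\rangle_{L^2(T^1)} \leq \tfrac{2\pi s^\eta}{\alpha}(\sum_{m\neq 0}|m|^{-4s}|\hat w(m)|^2)^{1/2}$, with equality precisely when $\xi_m$ is a nonnegative multiple of $|m|^{-2s}\hat w(m)$, equivalently when $\hat v(m)$ is proportional to $|m|^{-4s}\hat w(m)$. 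This is exactly the Fourier profile of the candidate $v^*$ in \eqref{eq:resull}.

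It then remains to certify the candidate. I would (i) note that $\sum_{m\neq 0}|m|^{-4s}|\hat w(m)|^2 \leq \sum_m|\hat w(m)|^2 < \infty$, since $|m|^{-4s}\leq 1$ and $w \in L^2$, so $A$ is finite and $v^*$ is well-defined and lies in $L^2_\circ(T^1)$ (and is real-valued, because $\hat w(-m)=\overline{\hat w(m)}$ forces $\hat{v^*}(-m)=\overline{\hat{v^*}(m)}$); (ii) substitute $\hat{v^*}(m) = A^{-1}|m|^{-4s}\hat w(m)$ into the constraint and verify, using the definition of $A$, that $\tfrac{\alpha}{s^\eta}(\sum_{m\neq 0}|m|^{4s}|\hat{v^*}(m)|^2)^{1/2}=1$, so $v^*\in\Sigma_s$; and (iii) compute $\langle w,v^*\rangle_{L^2(T^1)} = \tfrac{2\pi}{A}\sum_{m\neq 0}|m|^{-4s}|\hat w(m)|^2$ and check that it equals the Cauchy--Schwarz upper bound, so that $v^*$ attains the maximum.

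The computations are all routine, so the Cauchy--Schwarz step is not the real difficulty; the only points requiring care are the degenerate cases and the bookkeeping of conventions. If $w=0$ (equivalently $A=0$), the objective is identically zero on $\Sigma_s$ and any admissible element is optimal, so the formula \eqref{eq:resull} must be read under the standing assumption $w\neq 0$. Similarly, the constraint in \eqref{eq:linearexplicit-1} is only meaningful for $s>0$, the factor $1/s^\eta$ degenerating at $s=0$; the boundary value $s=0$ is handled separately in the insertion step and may be excluded here. I expect this case analysis, rather than the optimization argument itself, to be the only mildly delicate part.
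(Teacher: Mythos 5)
Your proposal is correct and follows essentially the same route as the paper: both pass to Fourier coefficients via Plancherel, rescale by $|m|^{2s}$ to reduce the constraint to the unit sphere of $\ell^2(\Z\setminus\{0\})$, and identify the maximizer of the linear functional there (your explicit Cauchy--Schwarz step is exactly the paper's ``normalized vector'' solution of the sphere-constrained problem). Your extra care with well-definedness of $v^*$ and the degenerate cases $w=0$, $s=0$ goes slightly beyond what the paper records, but changes nothing of substance.
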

\begin{proof}
First note that existence of a minimizer for \eqref{eq:linearexplicit-1} follows by specializing the arguments of Section \ref{sec:insertions} where a general insertion step is discussed.
Since $w \in L_\circ^2(T^1)$, we can represent $w$ and every $v \in L_\circ ^{2}(T^1)$ as 
\begin{equation}
w(x) =  \sum_{m \in \Z\setminus \{0\}} \hat w(m) e^{im\cdot x}\,,\quad v(x) =   \sum_{m \in \Z\setminus \{0\}} \hat v(m) e^{im\cdot x} \,.
\end{equation}
Therefore, with Plancherel's formula \eqref{eq:plancherel}, the variational problem \eqref{eq:linearexplicit-1} can be rewritten equivalently in the Fourier domain as 
\begin{equation}\label{eq:infourier-1}
\argmax_{ \hat v \in\ell^2(\Z) }     \langle \hat w, \hat v\rangle_{\ell^2(\Z)} \quad \text{s.t.} \ \ \ \frac{\alpha^2}{s^{2\eta}} \sum_{m \in \Z}   |m|^{4s}  |\hat v(m)|^2  = 1 \,.
\end{equation}
By the substitutions 
\begin{align}\label{eq:subs1}
    \tilde w(m) = \hat w(m) \frac{s^{\eta}}{\alpha } |m|^{-2s} \quad  \forall m \in \Z \setminus \{0\} \text{ and } \tilde w(0) = 0\,,
\end{align}
\begin{align}\label{eq:subs2}
    \tilde v(m) = \hat v(m) \frac{\alpha}{s^{\eta}} |m|^{2s} \quad  \forall m \in \Z\,,
\end{align}    
we can rewrite \eqref{eq:infourier-1} as $\argmax_{ \tilde v \in\ell^2(\Z) }  \langle \tilde w, \tilde v\rangle_{\ell^2(\Z)}$ under the constraint $\|\tilde v(m)\|_{\ell^2(\Z)}  = 1$ that has the unique solution $\tilde v^*(m) = \tilde w(m)/\|\tilde w\|_{\ell^2(\Z)}$. 
Therefore, by using \eqref{eq:subs1}, \eqref{eq:subs2}  and the inverse Fourier transform, a simple computation leads to \eqref{eq:resull}.
\end{proof}

In Algorithm \ref{alg:1dlaplalgorithm} we rewrite the generalized conditional gradient method  specialized for~\eqref{eq:inv_frac_1dim}. 
%
In the insertion step at the $k$-th iteration of the algorithm, the newly inserted atom is computed by first applying \cref{lem:explixit1d} with $w = f - \sum_{i =1}^{N_k} c_i^k v_i^k $ to find the optimal solution for a fixed $s \in [0,1]$, denoted by $v^k_{N_k+1}(s)$. Then, $v^k_{N_k+1}(s)$ is optimized with respect to $s$ by solving 
\begin{align}\label{eq:ott}
\max_{s \in [0,1]}\, \langle  p^k , v^k_{N_k+1}(s)\rangle\,.
\end{align}
We remark that the computation of \eqref{eq:ott} is performed using a Python implementation of a basinhopping–type algorithm whose performance is good enough to compute $\argmax_{s \in [0,1]}\, \langle  p^k , v^k_{N_k+1}(s)\rangle$
efficiently and with satisfactory accuracy.

  \begin{algorithm}[ht!]
  \caption{Generalized conditional gradient method for the infinite infimal convolution of fractional Laplacians with adaptive order}\label{alg:1dlaplalgorithm}
  \begin{algorithmic}
    \STATE  \STATE Initialize: $v^0 = 0$, $\sigma^0 = 0$,
$\Xi^0 = \emptyset$
%
   \STATE \textbf{for} \(k=0,1,2\dots \ \textbf{do}\)
  \STATE \qquad $p^k \leftarrow  f -\sum_{i=1}^{N_k}  c^k_i v^k_i$
  \STATE \qquad $v^k_{N_k+1}(s)(x) =  \frac{s^\eta}{\alpha\sqrt{\sum_{m \in \Z \setminus \{0\}} |m|^{-4s}|\hat p^k(m)|^2}}\sum_{m \in \Z\setminus \{0\}}  |m|^{-4s}\hat p^k(m) e^{im\cdot x}$
  \STATE \qquad \(s^k_{N_k+1} \in \argmax_{s \in [0,1]} \langle  p^k , v^k_{N_k+1}(s)\rangle\)
  \STATE \qquad $v^k_{N_k+1} \leftarrow v^k_{N_k+1}(s^k_{N_k+1})$
   \STATE \qquad \textbf{if}  $k \geq 1$, $\langle p^k,v^k_{N_k+1}\rangle  \leq 1$ or $( v^k_{N_k+1}, s^k_{N_k+1}) \in \Xi^k$ \textbf{then}
      \STATE \qquad \qquad \textbf{return} $(v^k,\sigma^k)$
         \STATE \qquad  \textbf{end if}
        \STATE \qquad $(\hat c^k_1, \ldots, \hat c^k_{N_k +1}) \in  \argmin_{c \in \R_+^{N_k+1}}\frac{1}{2}\|\sum_{i=1}^{N_k+1} c_i v^k_i - f\|_{L^2(T^1)}^2 +  \alpha\sum_{i=1}^{N_k+1} c_i$
   \STATE \qquad $v^{k+1} \leftarrow \sum_{i = 1}^{N_{k}+1} \hat c_i^k v_i^{k}$, \  $\sigma^{k+1} \leftarrow  \sum_{s \in \{s^k_i : \,i =1,\ldots,N_k+1\}} \big\|\sum_{j \in \{1,\ldots, N_k +1 :\, s^k_j = s\}}  \hat c^k_j v^k_j\big\|_X\delta_{s}$
        \STATE \qquad $\Xi^{k+1}  \leftarrow \big\{(v_i^k, s_i^k) \in \Xi^{k} \cup \{(v^k_{N_k+1},s^k_{N_k+1})\} : \hat c^k_i >0\big\}$
         \STATE \qquad $N_{k + 1}  \leftarrow \# \Xi^{k+1}$, \ $k \leftarrow k+1$
   \STATE  \textbf{end for}
   \STATE \textbf{return} $(v^k,\sigma^k)$
  \end{algorithmic}
  \end{algorithm}
  
\subsubsection{Bandlimited signals}\label{sec:band}
In practice, the implementation of Algorithm \ref{alg:1dlaplalgorithm} is achieved by truncating the Fourier expansion of the signals and summing only over a finite amount of frequencies in the computation of the fractional Laplacian, to avoid infinite sums. 
Therefore, at implementation time, the signals space $X$ is essentially finite-dimensional. We stress, however, that this discretization is the necessary consequence of the characterization of $\Ext(B^\alpha_s)$ given in \eqref{eq:ppp}. For other types of regularizers, such as TV-type penalties, a pure infinite-dimensional algorithm (both in $X$ and $S$) could be implemented.  We also remind the reader that while $X$ is implemented as a finite  dimensional space, $S$ is never discretized and our algorithm can be interpreted as an off-the-grid approach on the parameters $s\in S$. \\
In the experiment in Section \ref{sec:numexp1} we set the bound on the considered frequencies to $F = 256$.

\subsubsection{Numerical experiments}\label{sec:numexp1}
Consider a signal constructed in the following way.
Define
\begin{equation}
 v(x) = \frac{1}{2\pi}  \sum_{m \in \Z\setminus \{0\}} \hat v(m) e^{im\cdot x}\,,
\end{equation}
where $\hat v(m)$ is such that $\hat v(3) = \hat v(-3) = 8$, $\hat v(20) = \hat v(-20) = 2$ and $\hat v(m) = 0$ otherwise. We then construct the data $f$ by corrupting $v$ with Gaussian noise of standard deviation $7 \cdot 10^{-2}$. With these choices the relative noise level amounts to $\|f - v\|_{L^2}/\|v\|_{L^2} = 8.7 \cdot 10^{-2}$.
In the definition of $J$ and in the inverse problem in \eqref{eq:inv_frac_1dim} we make the parameter choices, $\eta = 2$, $\alpha = 1.5\cdot 10^{-3}$.
Figure \ref{fig:1d} shows the reconstruction obtained by Algorithm \ref{alg:1dlaplalgorithm} and Figure \ref{fig:1d_sparse} shows the orders $s \in [0,1]$ selected by the conditional gradient method and a convergence graph where we plot an approximation of the residual $r_{\mathcal{F}_\alpha}$ \eqref{def:residualinverse} at each iteration in logarithmic scale. More precisely, denoting by $(v^k,\sigma^k) = (\sum_{i = 1}^{N_{k}} c_i^{k} v_i^{k}$, $\sum_{s \in \{s^k_i : \,i =1,\ldots,N_k\}} \big\|\sum_{j \in \{1,\ldots, N_k :\, s^k_j = s\}}  c^k_j v^k_j\big\|_X\delta_{s})$ the iterate generated by Algorithm \ref{alg:1dlaplalgorithm} and by $(v^{\bar k},\sigma^{\bar k})$ the output, we plot the approximate residual $\hat r_{\mathcal{F}_\alpha}(\mu_k)$ defined as
\begin{align}\label{eq:approxresidual}
    \hat r_{\mathcal{F}_\alpha}(\mu_k) =\frac{1}{2}\|v^k - f\|_{L^2(T^1)}^2 + \alpha \sum_{i=1}^{N_k} c_i^k J(v^{k}_i,s^{k}_i) -  \frac{1}{2}\|v^{\bar k} - f\|_{L^2(T^1)}^2 - \alpha \sum_{i=1}^{N_{\bar k}} c_i^{\bar k} J(v^{\bar k}_i,s^{\bar k}_i)\,. 
\end{align}
Compared to \eqref{def:residualinverse} we approximate the minimum of \eqref{eq:inv_frac_1dim} by the energy of the last iterate $(v^{\bar k},\sigma^{\bar k})$.
Moreover, we replace the infinite infimal convolutions $R(v^{k},\sigma^{k})$ and $R(v^{\bar k},\sigma^{\bar k})$ by the upper bounds $\sum_{i=1}^{N_{k}} c_i^{k} J(v^{k}_i,s^{k}_i)$ and $\sum_{i=1}^{N_{\bar k}} c_i^{\bar k}J(v^{\bar k}_i,s^{\bar k}_i)$. This is justified, since \cite[Theorem 4.4]{bredies2021linear} guarantees that $\frac{1}{2}\|v^k - f\|_Y^2 + \alpha \sum_{i=1}^{N_k} c_i^k J(v^{k}_i,s^{k}_i)$ well approximates \eqref{eq:inv_frac_1dim} if $k$ is big enough.

\begin{figure}[ht!]
\begin{minipage}{0.33\textwidth}
\vspace*{3mm}
\centering
\includegraphics[width=5cm,height=4.8cm]{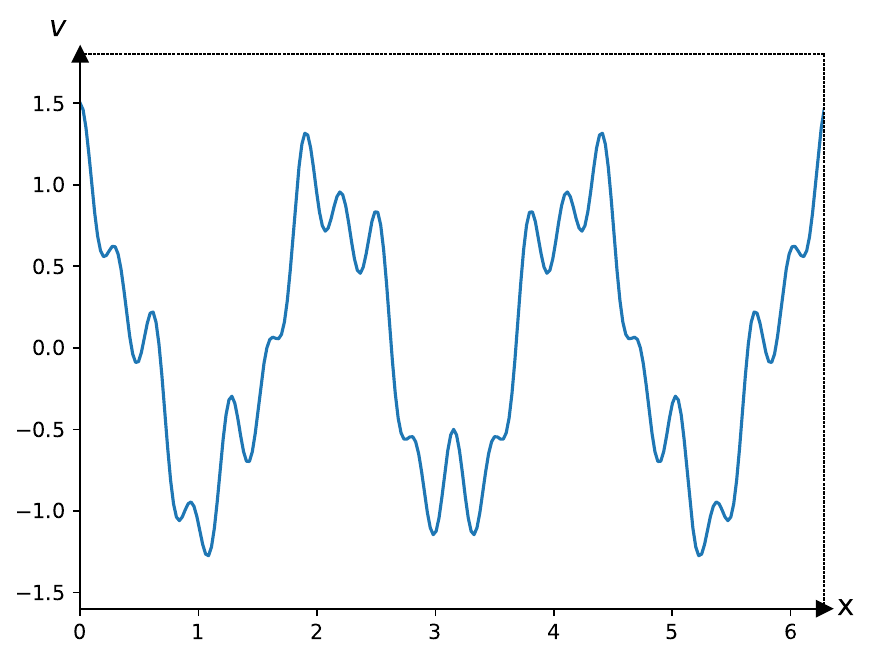}
\end{minipage} 
\begin{minipage}{0.33\textwidth}
\centering
\vspace*{3mm}
\includegraphics[width=5cm,height=4.8cm]{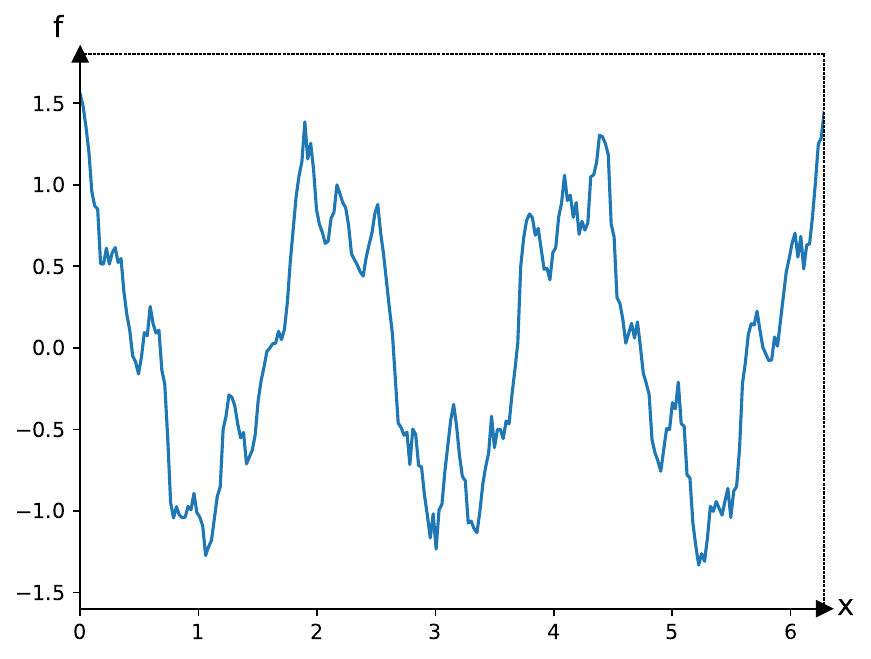}
\end{minipage} 
\begin{minipage}{0.33\textwidth}
\centering
\vspace*{3mm}
\includegraphics[width=5cm,height=4.8cm]{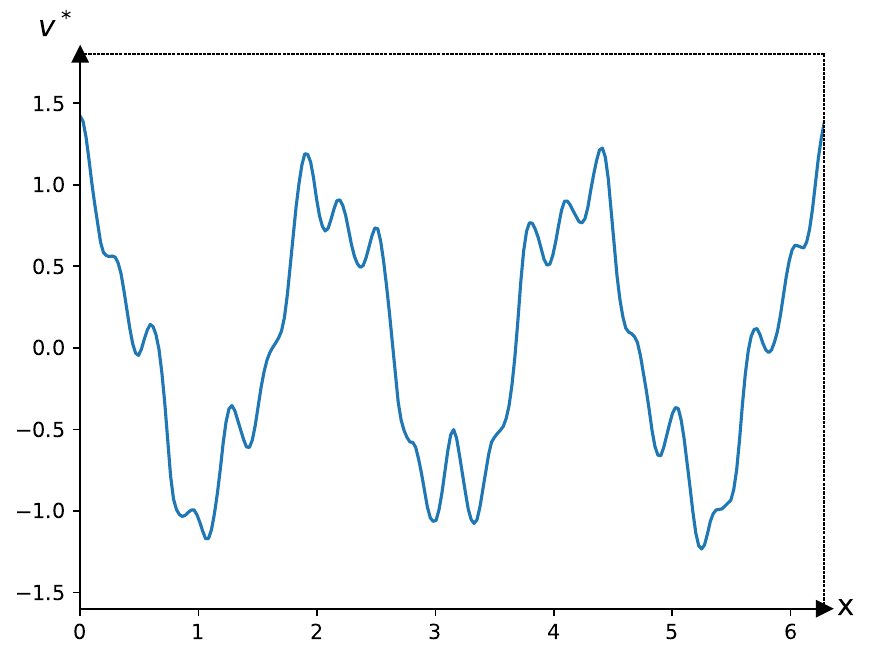}
\end{minipage}\\
\begin{minipage}{0.50\textwidth}
\vspace*{3mm}
\flushright
\includegraphics[width=4.4cm,height=3.6cm]{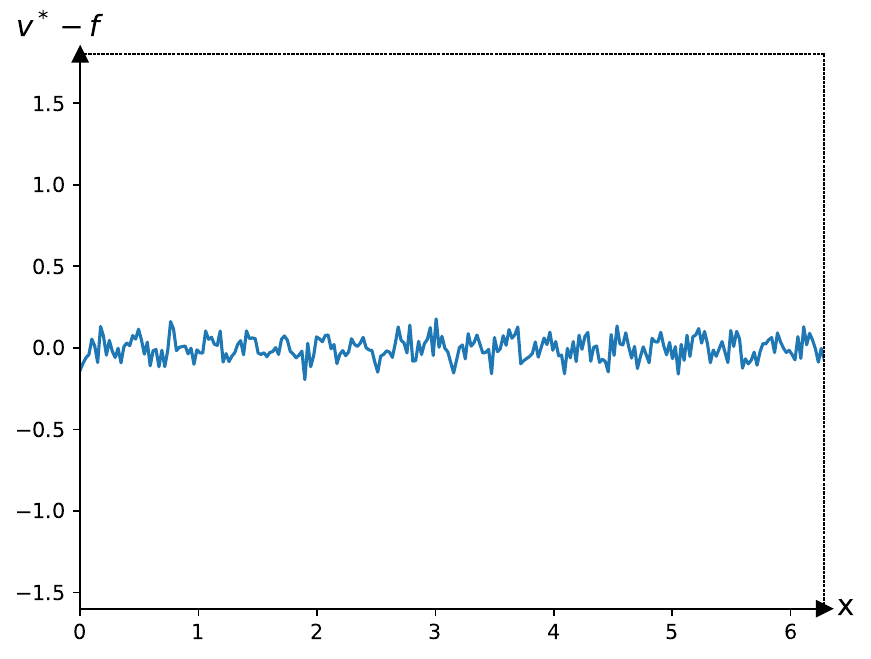}
\end{minipage} 
\begin{minipage}{0.50\textwidth}
\flushleft
\vspace*{3mm}
\includegraphics[width=4.4cm,height=3.6cm]{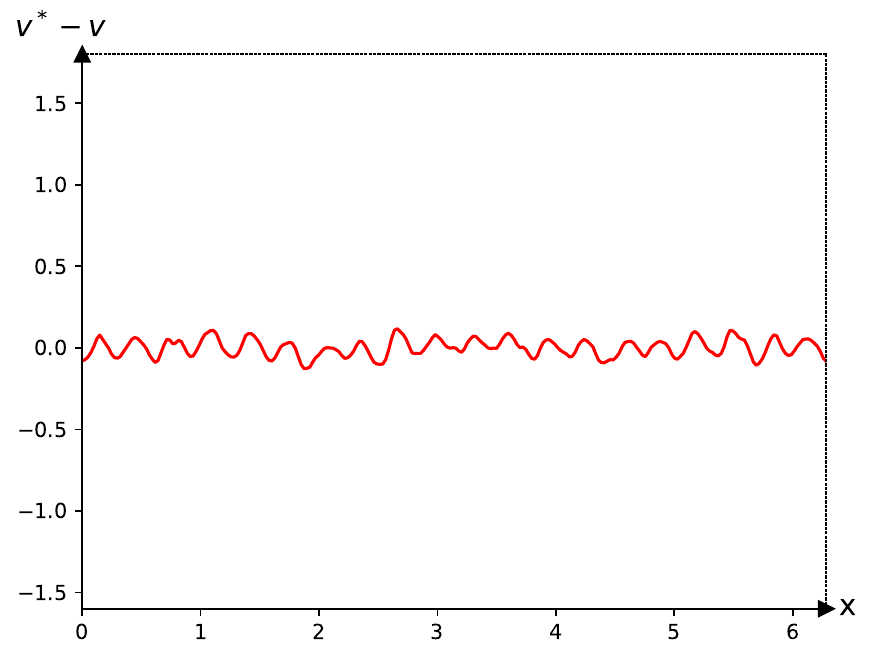}
\end{minipage} 
\caption{\small Above, from left to right: the ground truth $v$; the data $f$, i.e., the ground truth $v$ corrupted by Gaussian noise; the reconstruction $v^*$ provided by Algorithm \ref{alg:1dlaplalgorithm}. Below, from left to right: $v^* - f$, the noise removed by the algorithm; $v^* - v$, the natural bias of the Laplacian regularization that is favouring constant signals.}\label{fig:1d}
\end{figure}

Note that the obtained reconstruction resembles a Laplacian regularization of a noisy signal \cite{bartels2020parameter, antil2017spectral} and it is hard to give a clear interpretation on the optimal orders $s \in [0,1]$ selected by our algorithm. We stress however that the goal of this first example is solely to illustrate the main features of our general algorithm and demonstrate numerically the expected sublinear convergence of the residuals. We also note that in this example the rate of convergence appears to be linear, at least asymptotically. This has to be expected \cite{bredies2021linear}, but the theoretical verification of this behaviour is outside of the scope of this work. 

\textbf{Computational time:} For this experiment, Algorithm \ref{alg:1dlaplalgorithm} terminates in 3.22 seconds. While, slower than a standard higher-order denoising method for signals, our adaptive approach is still feasible for many real-world applications. 

\begin{figure}[ht!]
\begin{minipage}{0.49\textwidth}
\centering
\includegraphics[width=6cm,height=5cm]{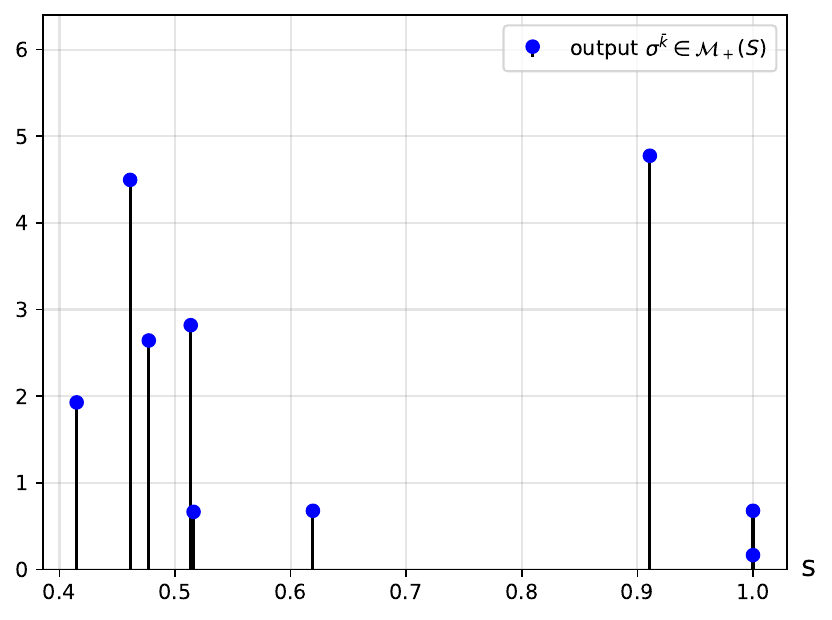}
\end{minipage}\hfill%
\begin{minipage}{0.49\textwidth}
\centering
\includegraphics[width=6cm,height=5cm]{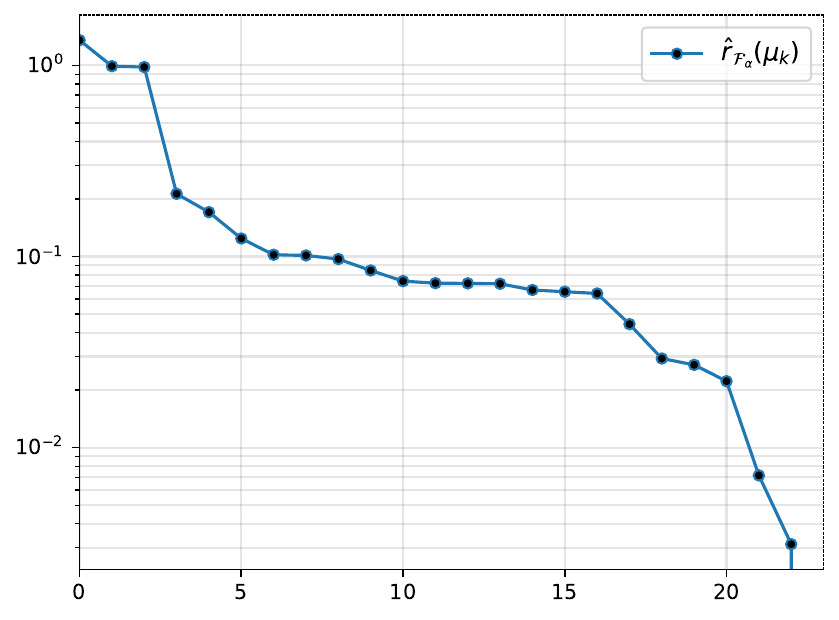}
\end{minipage}  
\caption{\small On the left: the output $\sigma^{\bar k} \in \mathcal{M}_+(S)$ of Algorithm \ref{alg:1dlaplalgorithm}; its support consists of the orders $s\in [0,1]$ selected by Algorithm \ref{alg:1dlaplalgorithm}.
On the right: a convergence graph, showing the approximate residual $\hat r_{\mathcal{F}_\alpha}$ \eqref{eq:approxresidual} computed at each iteration.}\label{fig:1d_sparse}
\end{figure}

\subsection{\texorpdfstring{Adaptively selecting the anisotropies of fractional Laplacians in an infinite\\ infimal convolution}{Adaptively selecting the anisotropies of fractional Laplacians in an infinite infimal convolution}}\label{sec:anisotropic}

In this section we design an infinite infimal convolution-type regularizer that is based on a variant of the anisotropic fractional Laplacian. The anisotropy  is determined by a parameter $s \in [0,\pi]$ associated to the direction $(\cos(s), \sin(s)) \in \mathbb{S}^1$. We construct an infinite infimal convolution regularizer that automatically selects the directions of higher oscillation of the noisy image $f$ and applies a lower regularization in such directions.
In this way, the regularizer learns how to preserve the anisotropy of the data and removes the noise that is assumed to be isotropic. 

\subsubsection{Setting}
We denote by $T^2$ the $2$-dimensional torus and  consider $S = [0,\pi]$ and $X = L^2(T^2)$ endowed with the $L^2$-norm. 
We define the operator $J : L^2(T^2) \times [0,\pi] \rightarrow [0,+\infty]$ as follows
\begin{equation}\label{eq:fraclapla}
J(v,s) := \sqrt{\sum_{m \in \Z^2} (|m \cdot (\cos(s),\sin(s))| + \zeta|m| + \omega)^{4\gamma} |\hat v(m)|^2 }\,,
\end{equation}
where $\gamma \in [0,1]$, $\zeta \geq 0$ and $\omega >0$ are fixed parameters.  
\begin{rem}
The parameter $\omega >0$ in the definition of $J$ is  essential for ensuring the coercivity of $J$ at frequencies $m\in \Z^2$ such that $m \cdot (\cos(s),\sin(s)) = 0$. 
Since the penalization $ \sqrt{\sum_{m \in \Z^2} (\zeta |m|)^{4\gamma} |\hat v(m)|^2}$ for $\zeta >0$ models isotropic regularization of a specific rate, the corresponding term in $J$ can therefore be useful to denoise images that exhibit isotropic behavior in some regions. If $\zeta$ is large then our regularizer  acts similarly to a standard fractional Laplacian of order $\gamma$, since the anisotropic part is negligible. Further variants of \eqref{eq:fraclapla} are possible. For example, one could allow for different orders of smoothness in a direction and its orthogonal, by considering the penalization 
\begin{align*}
\sqrt{\sum_{m \in \Z^2} \left(|m \cdot (\cos(s),\sin(s))|^{4\gamma_1} + |m \cdot (-\sin(s),\cos(s))|^{4\gamma_2} + \omega\right) |\hat v(m)|^2 } 
\end{align*}
for $\gamma_1>\gamma_2 > 0$.
\end{rem}

We consider the following regularizer given by the infinite infimal convolution of $J(\cdot,s)$:
\begin{equation}
R(v, \sigma) =  \inf\left\{ \int_0^\pi J(u(s),s)\, d\sigma(s) : u \in L^1(([0,\pi],\sigma);L^2(T^2)) \text {with } \int_0^\pi u(s)\, d\sigma(s) \right\}\,,
\end{equation}
and the corresponding denoising problem 
\begin{equation}\label{eq:denoising-anosotr-laplace}
\inf_{\substack{v\in  L^2(T^2),\\ \sigma\in \mathcal{M}_+(S)}} \frac{1}{2}\|v - f\|^2_{L^2(T^2)} + \alpha R(v, \sigma) \,,
\end{equation}
where $f \in  L^2(T^2)$ is a given measurement. 

We  verify that this problem satisfies the assumptions made in Section  \ref{sec:inverse}.
\begin{lemma}
The functional $J$ defined in \eqref{eq:fraclapla}
 satisfies \ref{ass:convexlsc},  \ref{ass:lsc} and \ref{ass:coercivity} in Section \ref{sec:inverse}.
\end{lemma}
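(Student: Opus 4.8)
The plan is to verify the three hypotheses by passing to Fourier coefficients, where $J(\cdot,s)$ becomes a weighted $\ell^2$-norm, and then to mirror — and in fact simplify — the argument of Lemma~\ref{lem:verass1dlaplacian}. Throughout I would write $\theta(s) := (\cos s,\sin s)$ and introduce the weights
\begin{equation*}
	w_s(m) := |m\cdot\theta(s)| + \zeta|m| + \omega, \qquad m\in\Z^2,\ s\in[0,\pi],
\end{equation*}
so that $J(v,s) = \big(\sum_{m\in\Z^2} w_s(m)^{4\gamma}|\hat v(m)|^2\big)^{1/2}$ and, crucially, $w_s(m)\ge\omega>0$ for all $m$ and all $s$. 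Since $X=L^2(T^2)$ is a Hilbert space, the weak* topology coincides with the weak topology.

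For \ref{ass:convexlsc} and \ref{ass:coercivity} the verification is immediate. For fixed $s$, the quantity $J(\cdot,s)$ is the norm induced by the inner product $\langle v,v'\rangle_s := \sum_m w_s(m)^{4\gamma}\hat v(m)\overline{\hat v'(m)}$; hence it is positively one-homogeneous and convex (by the triangle inequality together with homogeneity), and finite at $v=0$, giving \ref{ass:convexlsc}. For \ref{ass:coercivity} I would use the uniform lower bound $w_s(m)\ge\omega$ together with Plancherel's theorem to estimate $J(v,s)^2 \ge \omega^{4\gamma}\sum_m|\hat v(m)|^2 = c\,\omega^{4\gamma}\|v\|_{L^2(T^2)}^2$, so that \ref{ass:coercivity} holds with $C=(c\,\omega^{4\gamma})^{-1/2}$. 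This is precisely the point at which $\omega>0$ is needed, as anticipated in the preceding remark.

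The substantive step is \ref{ass:lsc}. Taking $v^n\rightharpoonup v$ in $L^2(T^2)$ and $s_n\to s$ in $[0,\pi]$, and assuming (as we may, after passing to a subsequence) that $\liminf_n J(v^n,s_n)=\lim_n J(v^n,s_n)<\infty$, I would first observe that testing the weak convergence against the characters $e^{im\cdot x}$ yields $\hat v^n(m)\to\hat v(m)$ for each fixed $m$, while continuity of $s\mapsto|m\cdot\theta(s)|$ gives $w_{s_n}(m)^{4\gamma}\to w_s(m)^{4\gamma}$. Consequently the term $w_{s_n}(m)^{4\gamma}|\hat v^n(m)|^2$ converges pointwise in $m$ to $w_s(m)^{4\gamma}|\hat v(m)|^2$, and Fatou's lemma for the counting measure on $\Z^2$ gives
\begin{equation*}
	\liminf_{n\to\infty} J(v^n,s_n)^2 \ge \sum_{m\in\Z^2} w_s(m)^{4\gamma}|\hat v(m)|^2 = J(v,s)^2,
\end{equation*}
whence \ref{ass:lsc} follows after taking square roots.

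The only delicate point is this last step, and there the decisive feature is that \emph{both} factors converge genuinely (not merely as $\liminf$), so that Fatou is applied to a summand possessing an honest pointwise limit. Unlike the one-dimensional functional of Lemma~\ref{lem:verass1dlaplacian}, the fixed exponent $\gamma$ and the strictly positive shift $\omega$ rule out any degeneracy of the weights, so no separate analysis of the endpoint $s=0$ is required; I therefore expect the argument here to be shorter and cleaner than its one-dimensional counterpart.
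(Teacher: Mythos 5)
Your proposal is correct and follows essentially the same route as the paper, which disposes of \ref{ass:convexlsc} and \ref{ass:coercivity} directly from the definition (using $\omega>0$ for the latter) and proves \ref{ass:lsc} by the same Fatou-in-Fourier-space argument as in the one-dimensional case. Your observation that the uniform lower bound $w_s(m)\ge\omega$ removes the need for the separate endpoint analysis of Lemma \ref{lem:verass1dlaplacian} is exactly why the paper's proof here is shorter.
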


\begin{proof}
Assumption \ref{ass:convexlsc} is straightforward from the definition of $J$. Assumption \ref{ass:lsc} can be proven similarly to Lemma \ref{lem:explixit1d} using Fatou's lemma.  
Assumption \ref{ass:coercivity} follows immediately from the definition of $J$ since $\omega >0$.
\end{proof}

\subsubsection{The generalized conditional gradient method}

We now describe a generalized conditional gradient method to solve \eqref{eq:denoising-anosotr-laplace}, following the general procedure outlined in Section \ref{sec:gcg}. For $B^\alpha_s = \{ v \in L^2(T^2) : \alpha J(v,s) \leq 1\}$, \cref{lem:extfractionallaplacian2} with the choice $f(m) = \alpha^2 (|m \cdot (\cos(s),\sin(s))| + \zeta|m| + \omega)^{4\gamma}$ provides the following characterization of the extremal points of $B^\alpha_s$:
\begin{align}\label{eq:exta}
\Ext(B_s) = \left\{ v \in L^2(T^2) : \alpha \sqrt{\sum_{m \in \Z^2} (| m \cdot (\cos(s),\sin(s)) | + \zeta|m| + \omega)^{4\gamma} |\hat v(m)|^2} = 1\right\}\,.
\end{align}
Therefore, given an iterate $\mu^k = \sum_{i=1}^{N_k} c^k_i v^k_i \delta_{s^k_i}$ with $N_k \in \N$, $c_i^k >0$, $s_i^k \in [0,\pi]$ and $v_i^k \in \Ext(B^\alpha_s)$ as in \eqref{eq:exta}, the insertion step in \eqref{eq:insertion_step2} becomes
\begin{equation}\label{eq:insertionlapl1dim2}
\argmax_{\substack{s \in [0,\pi], \\  v \in L^{2}(T^2)}} \Big\langle f-\sum_{i =1}^{N_k} c_i^k v_i^k,v\Big\rangle_{L^2(T^2)} + \mathbbm{1}_{{\{\alpha\sqrt{\sum_{m\in \Z^2} (| m \cdot (\cos(s),\sin(s)) | + \zeta|m| + \omega)^{4\gamma} |\hat v(m)|^2}} = 1\}}(v)\,.
\end{equation}
The following lemma gives an explicit formula for the inserted $v \in H^{2\gamma}(T^2)$  for a fixed $s \in [0,\pi]$. 
\begin{lemma}\label{lem:insanis}
Given $w \in L^2(T^2)$ and $s \in [0,\pi]$ consider the variational problem
\begin{equation}\label{eq:linearexplicitanis}
\max_{ v \in L^{2}(T^2)} \langle w,v\rangle_{L^2(T^2)} + \mathbbm{1}_{\{\alpha\sqrt{\sum_{m \in \Z^2} (|m \cdot (\cos(s),\sin(s)) | + \zeta|m| + \omega)^{4\gamma} |\hat v(m)|^2} = 1\}}(v)\,.
\end{equation}
A solution of \eqref{eq:linearexplicitanis}  can be computed as 
\begin{equation}
v^*(x) = \frac{1}{A}\sum_{m \in \Z^2} (| m \cdot (\cos(s),\sin(s)) | + \zeta|m| + \omega)^{-4\gamma}\hat w(m) e^{im\cdot x}\,
\end{equation}
where 
\begin{equation}
A = \alpha\sqrt{\sum_{m \in \Z^2}  (|m \cdot (\cos(s),\sin(s)) | + \zeta|m| + \omega)^{-4\gamma} |\hat w(m)|^2}\,.
\end{equation}
\end{lemma}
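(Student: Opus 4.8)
The plan is to mirror the proof of \cref{lem:explixit1d}, since \eqref{eq:linearexplicitanis} has exactly the same structure: maximize a linear functional over the boundary of a weighted $\ell^2$-ball, read off in the Fourier domain. Existence of a maximizer is not an issue, as it follows by specializing the general discussion of the insertion step in Section \ref{sec:insertions}. For fixed $s \in [0,\pi]$, I would abbreviate the (strictly positive) weights
\[
	a_m := (|m \cdot (\cos(s),\sin(s))| + \zeta|m| + \omega)^{4\gamma}, \qquad m \in \Z^2,
\]
and note that, since $\omega > 0$ and $\gamma \geq 0$, one has $a_m \geq \omega^{4\gamma} > 0$ for every $m$. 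Applying Plancherel's formula, problem \eqref{eq:linearexplicitanis} is then equivalent to
\[
	\argmax_{\hat v \in \ell^2(\Z^2)} \langle \hat w, \hat v\rangle_{\ell^2(\Z^2)} \quad \text{s.t.} \quad \alpha^2 \sum_{m \in \Z^2} a_m |\hat v(m)|^2 = 1.
\]

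Next I would perform the change of variables $\tilde v(m) := \alpha\, a_m^{1/2}\, \hat v(m)$ and $\tilde w(m) := \alpha^{-1} a_m^{-1/2}\, \hat w(m)$. This is well defined precisely because $a_m > 0$ for all $m$, and it turns the constraint into $\|\tilde v\|_{\ell^2(\Z^2)} = 1$ while leaving the objective unchanged, since $\langle \hat w, \hat v\rangle_{\ell^2} = \langle \tilde w, \tilde v\rangle_{\ell^2}$. Crucially, $\tilde w \in \ell^2(\Z^2)$: the bound $a_m^{-1} \leq \omega^{-4\gamma}$ gives $\|\tilde w\|_{\ell^2}^2 = \alpha^{-2}\sum_m a_m^{-1}|\hat w(m)|^2 \leq (\alpha^2 \omega^{4\gamma})^{-1}\|w\|_{L^2}^2 < \infty$, so the reduced problem $\argmax_{\|\tilde v\|=1} \langle \tilde w, \tilde v\rangle$ is finite and, by Cauchy--Schwarz, is solved uniquely (assuming $w \neq 0$, the only case of interest) by $\tilde v^*(m) = \tilde w(m)/\|\tilde w\|_{\ell^2}$.

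Finally I would undo the substitution. From $\hat v^*(m) = \alpha^{-1} a_m^{-1/2}\tilde v^*(m)$ and $\tilde w(m) = \alpha^{-1}a_m^{-1/2}\hat w(m)$ a direct computation yields $\hat v^*(m) = a_m^{-1}\hat w(m)/A$ with $A = \alpha \big(\sum_m a_m^{-1}|\hat w(m)|^2\big)^{1/2}$; rewriting $a_m^{-1} = (|m \cdot (\cos(s),\sin(s))| + \zeta|m| + \omega)^{-4\gamma}$ and transforming back to $T^2$ gives exactly the claimed $v^*$ and normalization $A$. The only point requiring care---and the reason this argument is cleaner than in \cref{lem:explixit1d}, where the frequency $m=0$ had to be excluded and the space $L^2_\circ$ used---is the uniform lower bound $a_m \geq \omega^{4\gamma} > 0$ supplied by $\omega > 0$: it simultaneously makes the substitution legitimate at every frequency (including $m=0$) and guarantees $\tilde w \in \ell^2(\Z^2)$, so that no zero-mean restriction is needed here.
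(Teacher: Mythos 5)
Your proof is correct and is precisely the adaptation of the argument of \cref{lem:explixit1d} that the paper has in mind (the paper omits the proof entirely, stating only that it is similar to that lemma). You also correctly identify the one genuine difference from the one-dimensional case, namely that the uniform lower bound $a_m \geq \omega^{4\gamma} > 0$ coming from $\omega > 0$ makes the substitution valid at every frequency including $m = 0$, so no zero-mean restriction is needed.
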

\begin{proof}
Since the proof is similar to that of Lemma \ref{lem:explixit1d} we omit it.
\end{proof}

  \begin{algorithm}[t!]
  \caption{Generalized conditional gradient method for infinite infimal convolution of fractional Laplacians with adaptive anisotropies}\label{alg:anisalgorithm}
  \begin{algorithmic}
    \STATE \STATE Initialize: $v^0 = 0$, $\sigma^0 = 0$,
$\Xi^0 = \emptyset$
   \STATE \textbf{for} \(k=0,1,2\dots \ \textbf{do}\)
  \STATE \qquad $p^k \leftarrow   f -\sum_{i=1}^{N_k}  c^k_i v^k_i$
  \STATE \qquad $v^k_{N_k+1}(s)(x) = \frac{\alpha^{-1}}{\sqrt{\sum_{m \in \Z^2} \frac{|\hat p^k(m)|^2}{(|m \cdot (\cos(s),\sin(s)) | + \zeta|m| + \omega)^{4\gamma}}}}\sum_{m \in \Z^2}\frac{\hat p^k(m)}{(|m \cdot(\cos(s),\sin(s)) | + \zeta|m| + \omega)^{4\gamma}}e^{im \cdot x}$
  \STATE \qquad \(s^k_{N_k+1} \in \argmax_{s \in [0,\pi]} \langle  p^k , v^k_{N_k+1}(s)\rangle\)
 \STATE \qquad $v^k_{N_k+1} \leftarrow v^k_{N_k+1}(s^k_{N_k+1})$
   \STATE \qquad \textbf{if} $k \geq 1$,  $\langle p^k,v^k_{N_k+1}\rangle  \leq 1$ or $(v^k_{N_k+1}, s^k_{N_k+1}) \in \Xi^k$ \textbf{then}
      \STATE \qquad \qquad \textbf{return} $(v^k,\sigma^k)$
         \STATE \qquad  \textbf{end if}
    \STATE \qquad $(\hat c_1, \ldots, \hat c_{N_k +1}) \in  \argmin_{c \in \R_+^{N_k+1}}\frac{1}{2}\|\sum_{i=1}^{N_k+1} c_i v^k_i - f\|_{L^2(T^2)}^2 +  \alpha\sum_{i=1}^{N_k+1} c_i$
   \STATE \qquad $v^{k+1} \leftarrow \sum_{i = 1}^{N_{k}+1} \hat c_i v_i^{k}$, \ $\sigma^{k+1} \leftarrow  \sum_{s \in \{s^k_i : \,i =1,\ldots,N_k+1\}} \big\|\sum_{j \in \{1,\ldots, N_k +1 :\, s^k_j = s\}}   \hat c^k_j v^k_j\big\|_X\delta_{s}$
         \STATE \qquad $\Xi^{k+1}  \leftarrow \big\{(v_i^k, s_i^k) \in \Xi^{k} \cup \{(v^k_{N_k+1},s^k_{N_k+1})\} : \hat c^k_i >0\big\}$
         \STATE \qquad $N_{k + 1}  \leftarrow \# \Xi^{k+1}$, \ $k \leftarrow k+1$
   \STATE  \textbf{end for}
   \STATE  \textbf{return} $(v^k,\sigma^k)$
  \end{algorithmic}
  \end{algorithm}

In Algorithm \ref{alg:anisalgorithm} we rewrite the generalized conditional gradient method  specialized to~\eqref{eq:denoising-anosotr-laplace}. 
%
In the insertion step at the $k$-th iteration of the algorithm, the newly inserted atom is computed by first applying \cref{lem:insanis} to find the optimal solution for a fixed $s \in [0,\pi]$, denoted by $v^k_{N_k+1}(s)$. Then, similarly to Section \ref{sec:1dadaptive}, $v^k_{N_k+1}(s)$ is optimized with respect to $s$ by solving 
\begin{align}\label{eq:ott2}
\argmax_{s \in [0,\pi]}\, \langle  p^k , v^k_{N_k+1}(s)\rangle\,.
\end{align}
Similarly to Section \ref{sec:1dadaptive}, we remark that the computation of \eqref{eq:ott2} is performed using a Python implementation of a basin-hopping--type algorithm whose performance is good enough to compute $\argmax_{s \in [0,\pi]} \langle  p^k , v^k_{N_k+1}(s)\rangle$ efficiently and with satisfactory accuracy.

\subsubsection{Bandlimited images}
Similarly to Algorithm \ref{alg:1dlaplalgorithm} (see Section \ref{sec:band}), the implementation of Algorithm \ref{alg:anisalgorithm} is also achieved by truncating the Fourier expansion of the images and summing only over a finite amount of frequencies in the computation of the fractional Laplacian, to avoid infinite sums. 
Similar considerations to the ones on Section \ref{sec:band} can be made. At implementation time, the image space $X$ is essentially-finite dimensional, while the direction space $S$ is never discretized. Therefore, for this numerical example, our algorithm can be interpreted as an off-the-grid approach on the parameters $s\in S$. \\
In first two experiments in Section \ref{sec:numm} we set the bound on the considered frequencies to $F = 128$, while in the last experiment we set the bound to $F = 256$.

\subsubsection{Numerical experiments}\label{sec:numm}

In this section we present several numerical examples of the applications of Algorithm \ref{alg:anisalgorithm} for denoising images with high frequencies in certain directions. 
We show that the regularizer is able to learn such directions and preserve the related frequencies in the reconstruction. For each example we plot the output measure $\sigma^{\bar k} \in \mathcal{M}_+(S)$ that is supported on the $s \in [0,\pi]$ selected by our algorithm. Note that the points $s= 0$ and $s=\pi$ correspond to the same direction $(\cos(s), \sin(s))$, and thus they should be identified. Moreover, similarly to the numerical experiments in Section \ref{sec:numexp1}, we plot an approximation of the residual $r_{\mathcal{F}_\alpha}$ \eqref{def:residualinverse} at each iteration in logarithmic scale. More precisely, denoting by $(v^k,\sigma^k) = (\sum_{i = 1}^{N_{k}} c_i^{k} v_i^{k}$, $\sum_{s \in \{s^k_i : \,i =1,\ldots,N_k+1\}} \big\|\sum_{j \in \{1,\ldots, N_k +1 :\, s^k_j = s\}}   \hat c^k_j v^k_j\big\|_X\delta_{s})$ the iterate generated by Algorithm \ref{alg:anisalgorithm} and by $(v^{\bar k},\sigma^{\bar k})$ the output, we plot the approximate residual $\hat r_{\mathcal{F}_\alpha}(\mu_k)$ defined as in \eqref{eq:approxresidual}. We refer to the discussion in Section \ref{sec:numexp1} as a justification for this choice.
In all examples we observe a convergence rate that is at least sublinear, as ensured by Theorem \ref{thm:convergence}.

In the first  example we aim to denoise a black-and-white grid of size $128 \times 128$ that is corrupted with Gaussian noise with standard deviation  $0.3$, see Figure \ref{fig:grid}. With these choices the relative noise level amounts to $\|f - v\|_{L^2}/\|v\|_{L^2} = 3.6 \cdot 10^{-1}$. In the definition of $J$ in \eqref{eq:fraclapla} we set  $\alpha = 5.5$, $\gamma = 0.25$, $\omega = 10^{-3}$, and $\zeta = 10^{-3}$. We compare the reconstruction obtained by our algorithm with a reconstruction produced using as regularizer the $L^2$-norm of the $\frac{1}{4}$-fractional Laplacian. We observe that our method reconstructs sharper edges. This is because our functional reduces the amount of regularization in the directions of the anisotropies of the image.

\begin{figure}[ht!]
\begin{minipage}{0.23\textwidth}
\centering
\includegraphics[width=3.5cm,height=3.5cm]{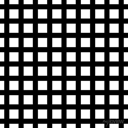}
\end{minipage}\ \ \ 
\begin{minipage}{0.23\textwidth}
\centering
\includegraphics[width=3.5cm,height=3.5cm]{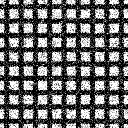}
\end{minipage}\ \ \ 
\begin{minipage}{0.23\textwidth}
\centering
\includegraphics[width=3.5cm,height=3.5cm]{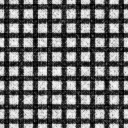}
\end{minipage} \ \ 
\begin{minipage}{0.23\textwidth}
\centering
\includegraphics[width=3.5cm,height=3.5cm]{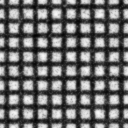}
\end{minipage} \ \ 
\caption{\small  From left to right:  ground truth;  data $f$, i.e., ground truth corrupted by Gaussian noise;  reconstruction obtained with our algorithm;  reconstruction obtained with  $\frac{1}{4}$-fractional Laplacian regularization.} \label{fig:grid}
\end{figure}

\begin{figure}[ht!]
\begin{minipage}{0.49\textwidth}
\centering
\vspace*{-1mm}
\includegraphics[width=6cm,height=5cm]{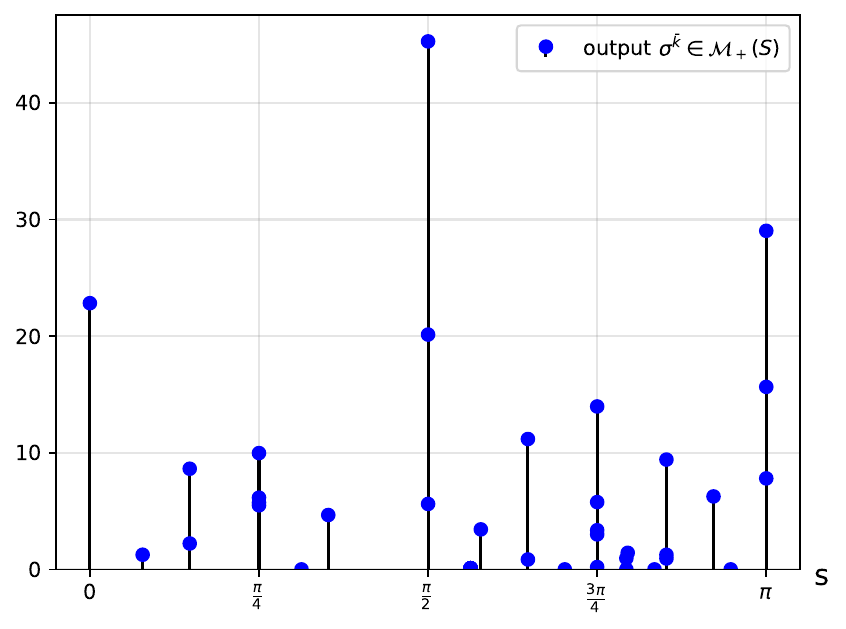}
\end{minipage}\ \ %
\begin{minipage}{0.49\textwidth}
\centering
\includegraphics[width=6cm,height=5cm]{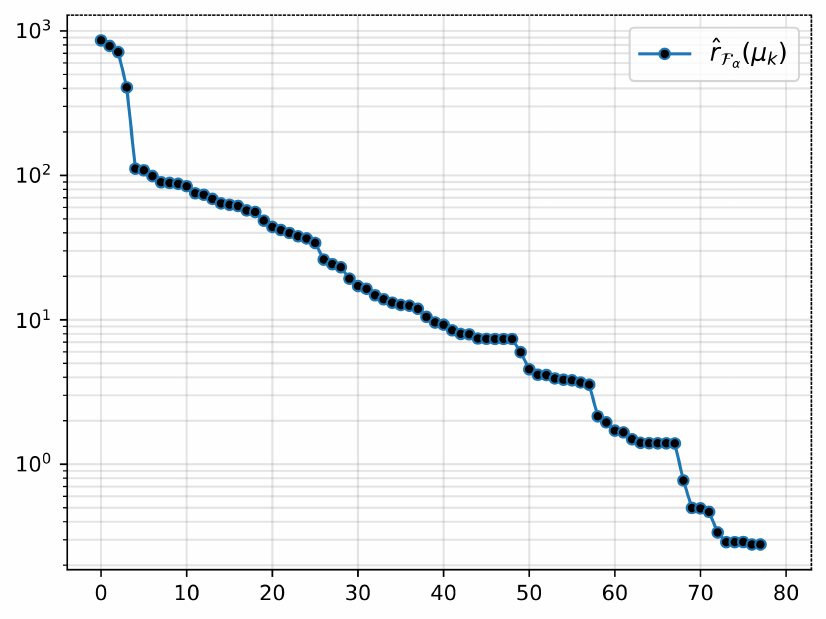}
\end{minipage} 
\caption{\small On the left: the measure $\sigma^{\bar k} \in \mathcal{M}_+(S)$ obtained in output by Algorithm \ref{alg:anisalgorithm}; its support consists of the directions of the dominant anisotropies of the image $s\in [0,\pi]$ selected by Algorithm \ref{alg:anisalgorithm}.
On the right: a convergence graph, showing the approximate residual $\hat r_{\mathcal{F}_\alpha}$ according to \eqref{eq:approxresidual}
computed at each iteration.}\label{fig:convgrid}
\end{figure}

In Figure \ref{fig:convgrid} we show the output measure $\sigma^{\bar k} \in \mathcal{M}_+(S)$ supported on the angles $s \in [0,\pi]$ selected by our algorithm and we plot the decay of the approximate residual $\hat r_{\mathcal{F}_\alpha}$ according to \eqref{eq:approxresidual}. We see that the selected angles concentrate around~$0$, ~$\frac{\pi}{2}$ and $\pi$ which are the most evident anisotropies of the image. Note again that the points $s= 0$ and $s=\pi$ correspond to the same direction $(\cos(s), \sin(s))$, and thus they should be identified.
We also observe that due to the imperfect anisotropy of the grid, the optimal $\sigma \in \mathcal{M}_+(S)$ also has significant mass at $\frac{\pi}{4}$ and $\frac{3}{4}\pi$. The convergence plot illustrates the sublinear convergence  of our algorithm. 
 \\
\textbf{Computational time:} For this experiment, Algorithm \ref{alg:anisalgorithm} terminates in $658.37$ seconds. On the other hand the reconstruction obtained with the $\frac{1}{4}$-fractional Laplacian terminates in $22.87$ seconds. We remark that the algorithm for performing the fractional Laplacian regularization is simply obtained by removing from Algorithm \ref{alg:anisalgorithm} the computation of the argmax in $s$. It is however possible to achieve considerably faster computational times with other methods \cite{bartels2020parameter}.

In the second example we denoise a more complicated black-and-white geometric pattern corrupted by Gaussian noise with standard deviation equal to $0.3$, see Figure \ref{fig:inc}. With these choices the relative noise level amounts to $\|f - v\|_{L^2}/\|v\|_{L^2} = 2.3 \cdot 10^{-1}$. In the definition of $J$ in \eqref{eq:fraclapla} we set  $\alpha = 5.5$, $\gamma = 0.25$, $\omega = 10^{-3}$, and $\zeta = 10^{-3}$.
Similarly to the first example, we observe  that our method reconstructs sharper edges thanks to its adaptivity. 
In Figure \ref{fig:inc} we show the directions selected by the algorithm and the convergence plot of the approximate residual \eqref{def:residualinverse}. In this example the selected $s \in [0,\pi]$  cluster around the values~$\frac{1}{4} \pi$ and~$\frac{3}{4} \pi$ that represent the  anisotropies of the original image. The approximate convergence rate $\hat r_{\mathcal{F}_\alpha}$ appears again sublinear.

\begin{figure}[ht!]
\begin{minipage}{0.30\textwidth}
\centering
\includegraphics[width=4cm,height=4cm]{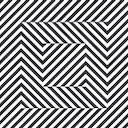}
\end{minipage} \quad 
\begin{minipage}{0.30\textwidth}
\centering
\includegraphics[width=4cm,height=4cm]{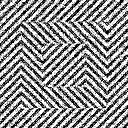}
\end{minipage}\quad 
\begin{minipage}{0.30\textwidth}
\centering
\includegraphics[width=4cm,height=4cm]{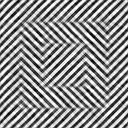}
\end{minipage} \quad 
\caption{\small  From left to right: the ground truth; the data $f$, i.e., the ground truth corrupted by Gaussian noise; the reconstruction obtained with our algorithm.}\label{fig:inc}
\end{figure}

\begin{figure}[ht!]
\begin{minipage}{0.49\textwidth}
\centering
\includegraphics[width=6cm,height=5cm]{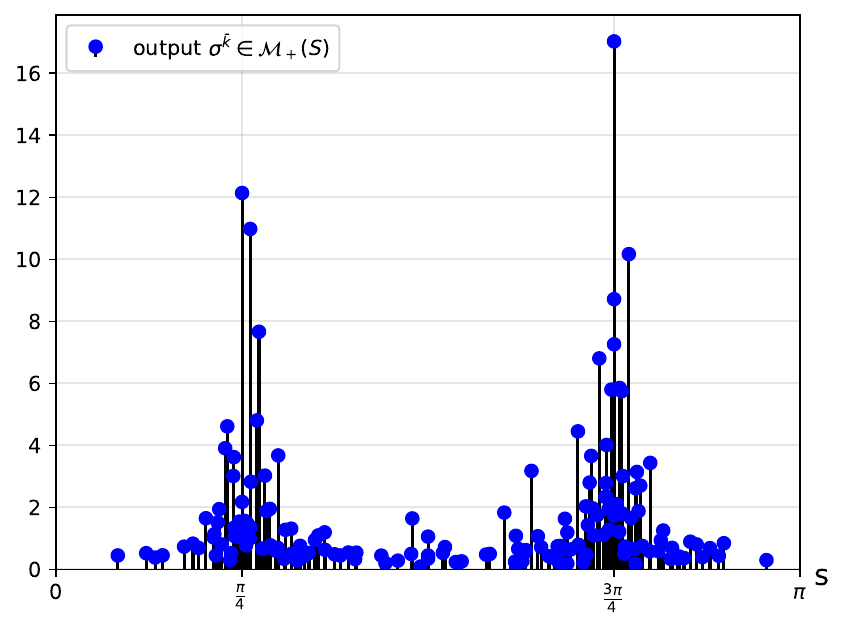}
\end{minipage} 
\begin{minipage}{0.49\textwidth}
\centering
\vspace*{-1mm}
\includegraphics[width=6cm,height=5cm]{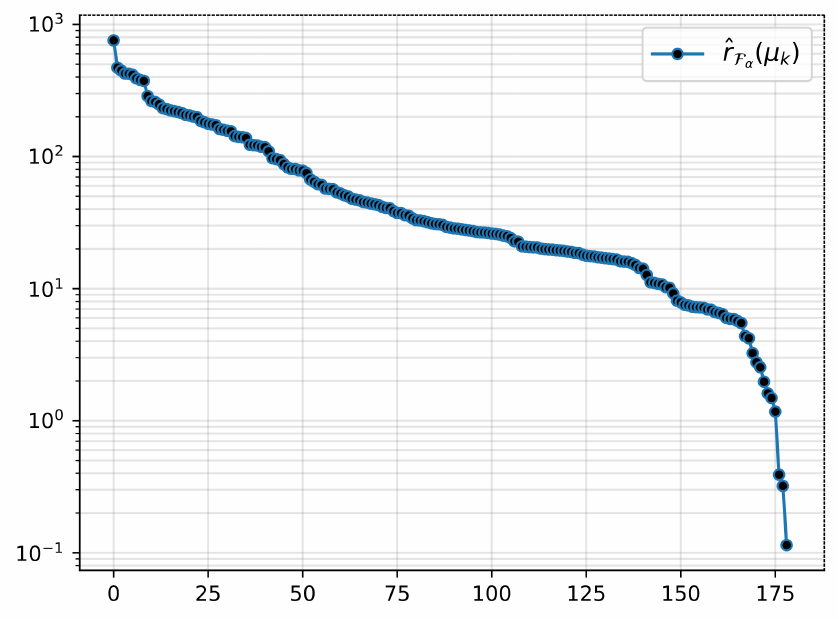}
\end{minipage}
\caption{ \small On the left: the measure $\sigma^{\bar k} \in \mathcal{M}_+(S)$ obtained in output by Algorithm \ref{alg:anisalgorithm}; its support consists of the directions of the dominant anisotropies of the image $s\in [0,\pi]$ selected by Algorithm \ref{alg:anisalgorithm}.
On the right: a convergence graph, showing the approximate residual $\hat r_{\mathcal{F}_\alpha}$ according to \eqref{eq:approxresidual} computed at each iteration.}
\end{figure}
\noindent \textbf{Computational time:} For this experiment, Algorithm \ref{alg:anisalgorithm} terminates in $2720.98$ seconds. Note that the computational time is higher than in the previous example. This is likely due to the more complicated geometric pattern of the data.

In the last experiment we denoise a $253$ $\times$ $253$ grayscale image of a bamboo fence corrupted by Gaussian noise with standard deviation $0.2$, see Figure \ref{fig:fence}. With these choices the relative noise level amounts to $\|f - v\|_{L^2}/\|v\|_{L^2} = 2.3 \cdot 10^{-1}$. In the definition of $J$ in \eqref{eq:fraclapla} we set $\alpha = 6.5$, $\gamma = 0.25$, $\omega = 10^{-3}$, and $\zeta = 5\cdot 10^{-3}$. 
Similarly to  previous examples, we observe that our method  reconstructs sharper edges. For this experiment we compare the reconstructions obtained by our method with classical model-based denoising methods. Firstly, we consider methods that do not impose preferred directions of regularization on the image: Total Variation (TV) \cite{rof} and Total Generalized Variation (TGV) \cite{bkp}, see Figure \ref{fig:fencecomparison}.

\begin{figure}[ht!]
\begin{minipage}{0.30\textwidth}
\centering
\includegraphics[width=4cm,height=4cm]{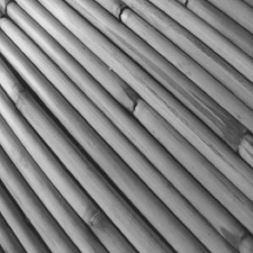}
\end{minipage} \quad 
\begin{minipage}{0.30\textwidth}
\centering
\includegraphics[width=4cm,height=4cm]{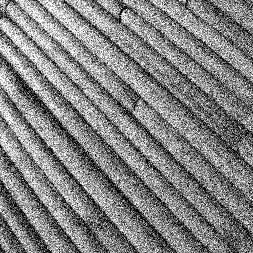}
\end{minipage}\quad 
\begin{minipage}{0.30\textwidth}
\centering
\includegraphics[width=4cm,height=4cm]{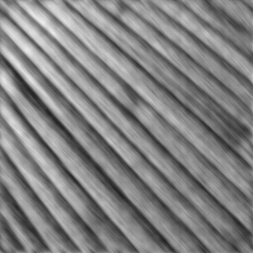}
\end{minipage} \quad 
\caption{ \small From left to right:  ground truth;  data $f$, i.e., ground truth corrupted by Gaussian noise with standard deviation $0.2$ (PSNR = 14.0);  reconstruction obtained with our algorithm (PSNR = 25.8).}\label{fig:fence}
\end{figure}

\begin{figure}[ht!]
\includegraphics[scale=0.35]{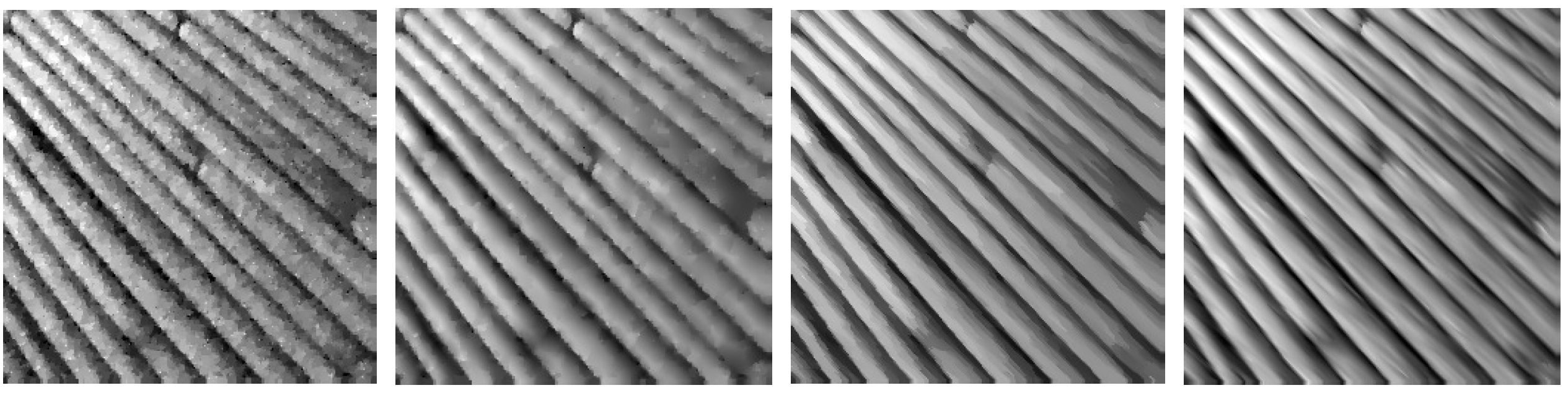}
\caption{Comparison with classical denoising regularizers. From left to right: TV  (PSNR = 23.8), TGV  (PSNR = 24.7), Directional TV  (PSNR = 26.8), and Directional TGV  (PSNR = 28.2)}\label{fig:fencecomparison}
\end{figure}

Our method outperforms them in terms of the PSNR and visual quality. Secondly, we compare our method to approaches that are aware of preferred directions in the image: directional TV  \cite{kongskov2019directional} and  directional TGV  \cite{kongskov2019directional}.
In this case, we achieve slightly worse performance in terms of PSNR and visual quality. 
\begin{figure}[ht!]
\begin{minipage}{0.49\textwidth}
\centering
\includegraphics[width=6cm,height=5cm]{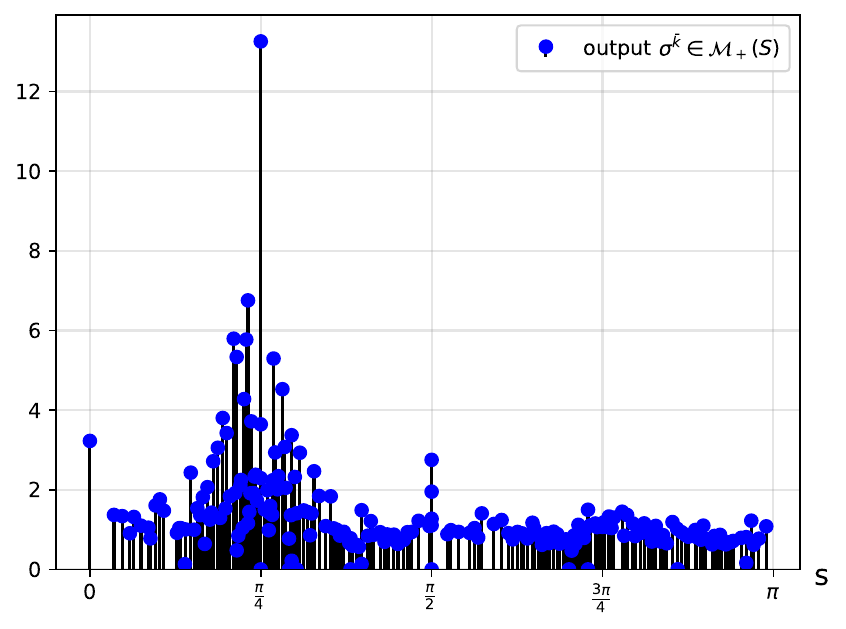}
\end{minipage}\ \  
\begin{minipage}{0.49\textwidth}
\centering
\vspace*{-1mm}
\includegraphics[width=6cm,height=5cm]{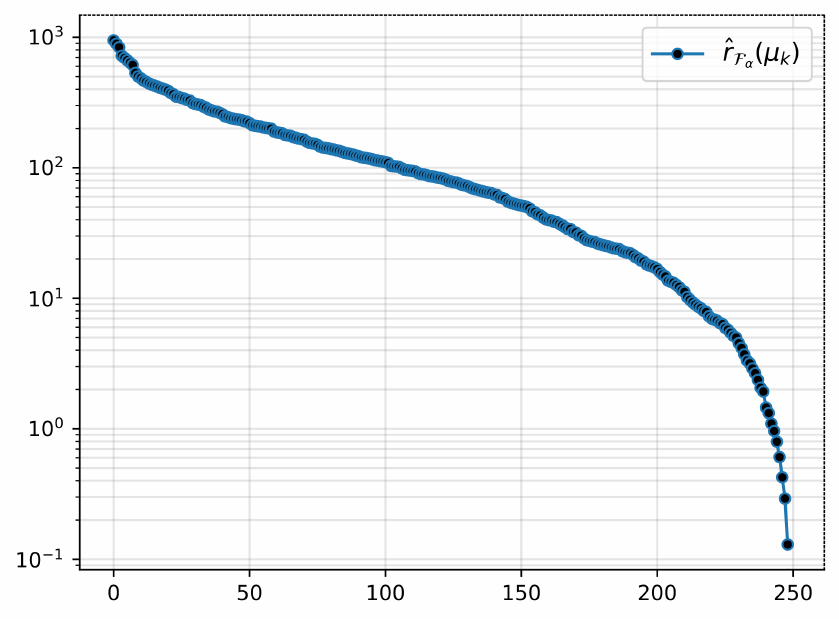}
\end{minipage}
\caption{\small  On the left: the measure $\sigma^{\bar k} \in \mathcal{M}_+(S)$ obtained in output by Algorithm \ref{alg:anisalgorithm}; its support consists of the directions of the dominant anisotropies of the image $s\in [0,\pi]$ selected by Algorithm \ref{alg:anisalgorithm}.
On the right: a convergence graph, showing the approximate residual $\hat r_{\mathcal{F}_\alpha}$ according to \eqref{eq:approxresidual} computed at each iteration.}\label{fig:convfence}
\end{figure}
This is due to the fact that TV and TGV regularization produce better reconstructions than the (isotropic) fractional Laplacian when the ground truths exhibit piecewise constant behavior \cite{bartels2020parameter}. Therefore, for images like the bamboo fence, we cannot expect to outperform directional TV and directional TGV. 
However, we remark that, in contrast to \cite{kongskov2019directional} our method is able to select automatically the directions of the anisotropies of the image that are encoded into the output measure $\sigma \in \mathcal{M}_+(S)$. 
Figure \ref{fig:convfence} shows the directions selected by our algorithm and the convergence plot of the residual. As expected, the selected $s \in [0,\pi]$ concentrate around $\frac{\pi}{4}$ and we again observe a sublinear convergence rate for the residual.\\
\textbf{Computational time:} For this experiment, Algorithm \ref{alg:anisalgorithm} terminates in $12245.31$ seconds. 
In comparison, standard numerical implementations of TV and TGV regularization for denoising terminate in $9$ seconds and $29$ seconds, respectively \cite{kongskov2019directional}. Implementations of Directional TV and Directional TGV are also much faster and terminate in $12$ seconds and $37$ seconds \cite{kongskov2019directional}.  \\
The computational time for adaptively searching in the continuous space of directions is starting to be prohibitive for images of this resolution, also in comparison with non-adaptive methods. However, we would like to remark that a careful optimization of our algorithm could lead to much faster computational times. Moreover, it would be possible to include acceleration steps and better insertion procedures, c.f. Section \ref{sec:inss}, that would allow to ease the computational burden of the algorithm making it also applicable to more diverse imaging tasks.

\subsection{On the computational complexity of the insertion steps}\label{sec:computationalissues}\label{sec:inss}

As anticipated in Remark \ref{rem:comp}, the optimization of \eqref{eq:ott} and \eqref{eq:ott2} is the main computational bottleneck of our numerical experiments. This is due to the fact that problems \eqref{eq:ott}, \eqref{eq:ott2} are neither convex or concave and thus finding the global optimum is a challenging task. For this reason, several strategies have been employed to ease the computational hardness of insertion steps in generalized conditional gradient methods such as multi-start gradient descents, variants of the Newton method and dynamic programming approaches \cite{denoyelle2019sliding, bredies2022generalized, boyd2017alternating, duval2021dynamical,pieper2021linear}. Moreover, acceleration strategies designed to reduce the number of required insertion steps have also been proposed \cite{denoyelle2019sliding, bredies2022generalized}. \\
In our numerical simulations, we have employed a Python implementation of a basin-hopping--type algorithm whose performance is good enough to compute $\argmax_{s \in S} \langle  p^k , v^k_{N_k+1}(s)\rangle$ efficiently and with satisfactory accuracy. Moreover, we have not implemented any acceleration strategy, since our main goal was to illustrate how generalized conditional gradient methods offer a natural framework to design grid-free algorithms with guarantees that can be used to solve inverse problems regularized with infinite infimal convolution operators. Having said that, we believe that targeted optimization schemes for solving insertion steps, together with the addition of suitable acceleration steps, would lead to much faster computational times, allowing for regularizers different from fractional Laplacian-type energies and consequently higher applicability potential.

\section{Conclusions}

\sloppy We introduced the infinite infimal convolution operator for the regularization of ill-posed inverse problems.
Such operator is constructed through an infimal convolution of a continuously parametrized family of convex, positively one-homogeneous and coercive functionals and the regularization depends on a Radon measure defined on the set of parameters $S$ that prescribes which functionals are active in the infimal convolution. The variational  problem is  then formulated with respect to both the measure $\sigma$ in $S$ and the reconstruction is obtained by the integration operation $v = \int_S u(s)\, d\sigma(s)$. Using the positive one-homogeneity of each functional in the infimal convolution, we proposed a convex lifting of the problem in the space of measures with values in Banach space, which allows us to prove well-posedness. Moreover, since this reformulation leads to a convex optimization problem in the space of Radon measures, we could  analyze the sparsity of solutions, obtain a representer theorem and propose a generalized conditional gradient method to compute the minimizers. This allows us to exploit sparsity both in the parameters space and in the reconstruction domain.
Finally, we applied the model and the generalized conditional gradient method to several illustrative examples involving fractional-Laplacian-type regularizers.

The analysis of other types of regularizers (not of fractional-Laplacian type) is left to future work. Successful design of a generalized conditional gradient method relies on the possibility to solve the insertion step \eqref{eq:insertion_step2} in a fast an accurate way for a given $s \in S$. This is true for the fractional Laplacian (see Lemma \ref{lem:explixit1d} and Lemma \ref{lem:insanis}); however, in case on more complicated regularizers, such as directional BV-norms and directional total generalized variations, the resolution of the insertion step could be impractical already for two-dimensional domains and high resolution images.
Since there are no general recipes for solving \eqref{eq:insertion_step2} efficiently, the feasibility of Algorithm \ref{alg:corealgorithm} and strategies of solving it need to be studied case-by-case.

\section{Acknowledgments}

KB gratefully acknowledges support by the Austrian Science Fund (FWF) through project P 29192 ``Regularization graphs for variational imaging''.
MC is supported by the Royal Society (Newton International Fellowship NIF\textbackslash R1\textbackslash 192048 ``Minimal partitions as a robustness boost for neural network classifiers''). The Institute of Mathematics and Scientific Computing, to which KB and MH are affiliated, is a member of NAWI Graz (\texttt{https://www.nawigraz.at/en/}). KB and MH are further members of BioTechMed Graz (\texttt{https://biotechmedgraz.at/en/}). 
YK acknowledges support of the EPSRC (Fellowship EP/V003615/1 and Programme Grant EP/V026259/1) and the National Physical Laboratory. 
CBS acknowledges support from the Philip Leverhulme Prize, the Royal Society Wolfson Fellowship, the EPSRC advanced career fellowship EP/V029428/1, EPSRC grants EP/S026045/1 and EP/T003553/1, EP/N014588/1, EP/T017961/1, the Wellcome Innovator Awards 215733/Z/19/Z and 221633/Z/20/Z, the European Union Horizon 2020 research and innovation programme under the Marie Skodowska-Curie grant agreement No. 777826 NoMADS, the Cantab Capital Institute for the Mathematics of Information, and the Alan Turing Institute. 
MC, YK, and CBS would like to thank the Isaac Newton Institute for Mathematical Sciences in Cambridge (supported by EPSRC grant no EP/R014604/1) for support and hospitality during the programme ``Mathematics of Deep Learning'' where part of the work on this paper was undertaken.

\AtNextBibliography{\small}

\printbibliography

\appendix 

\section{Measures with values in Banach spaces}\label{sec:preliminaries}


In this section we give the necessary definitions and results about measures with values in Banach spaces that are needed in this paper and we formulate them in our setting. We refer to \cite{DiestelUhl,Ryan} to a more detailed overview about the existing theory.

\subsection{\texorpdfstring{$L^p$-functions with values in Banach spaces}{Lp-functions with values in Banach spaces}}\label{app:bochner}

Let $d \in \N$ and $X$ be a Banach space. Given a non-empty compact set $S \subset \R^d$, $\mu \in \mathcal{M}_+(S)$ a Radon measure on $S$ and $p \in [1,\infty)$ we denote by $L^p((S,\mu);X)$ the space of $L^p$ functions with values in $X$ defined as
\begin{equation}
L^p((S,\mu);X) = \left\{u :S \rightarrow X \text{ measurable } : \int_S \|u(s)\|^p_X\, d\mu(s) <\infty\right\}\,,
\end{equation}
where we say that a function $u :S \rightarrow X$ is measurable if it is the $\mu$-a.e. limit of a sequence of simple functions; see \cite[Section 2.3]{Ryan} for more details. Note also that each $u\in L^p((S,\mu);X)$ is $p$-integrable in the sense of Bochner. It is well-known that the space $L^p((S,\mu);X)$ is a Banach space with the norm 
\begin{equation*}
\|u\|_{L^p((S,\mu);X)} = \left(\int_S \|u(s)\|^p_X\, d\mu(s)\right)^{1/p}\,.
\end{equation*}

\subsection{Definition and Radon--Nikod\'ym property}\label{sec:rrd}
We will consider finite Radon measures defined in the compact set $S \subset \R^d$ with values in $X$ that is the dual of a separable Banach space $\predual{X}$ and we denote this set as $\mathcal{M}(S;X)$. The formal definition is similar to the classical notion of measure with values in $\R$. Denoting by $\Sigma$ the Borel $\sigma$-algebra of $S$ a function $\mu : \Sigma \rightarrow X$ is a Borel measure if
\begin{itemize}
\item[i)] $\mu(\emptyset) = 0$
\item[ii)] For any countable family $(E_i)_{i=1,}^\infty$ of pairwise disjoint Borel sets we have that 
\begin{equation}\label{eq:countableadditivity}
\mu\left(\bigcup_{i=1}^\infty E_i\right) = \sum_{i=1}^\infty \mu(E_i),
\end{equation}
 where the limit in \eqref{eq:countableadditivity} is taken with respect to the strong topology of $X$.
\end{itemize}
The classical definition of variation can be easily extended to measures with values in Banach spaces. Given  a Borel measure $\mu$ and $E \in \Sigma$ we define 
\begin{equation}\label{eq:totalvariation}
|\mu|(E) = \sup \left\{\sum_{i=1}^n  \|\mu(A_i)\|_X: \ n \in \N,\,  (A_i)_{i=1}^n \ \text{partition of } E\right\}\,.
\end{equation}
We denote by $\mathcal{M}(S;X)$ the set of Borel measures with finite variation and we call their elements Radon measures. It can be shown that for $\mu \in \mathcal{M}(S;X)$, $|\mu|$ belongs to the set of finite positive Borel measures $\mathcal{M}_+(S)$ \cite[Proposition 9]{DiestelUhl} and $\mathcal{M}(S;X)$ is a Banach space with the total variation norm defined as $\|\mu\|_{\mathcal{M}(S;X)} = |\mu|(S)$ for every  $\mu \in \mathcal{M}(S;X)$. 
We remark that every $\mu \in \mathcal{M}(S;X)$ is regular, in the sense that $|\mu|$ is regular in the classical sense, being a finite positive Borel measure defined on a compact set in $\R^d$.
Moreover, for every $\mathcal{M}(S;X)$ we have $\mu \ll |\mu|$ in the sense that for every Borel set $E\subset S$ such that $|\mu|(E) = 0$ it holds that $\mu(E) = 0$.

\begin{rem}
We point out that the total variation norm defined above can be replaced by the total semi-variation \cite{DiestelUhl} to endow the space $\mathcal{M}(S;X)$ with an alternative Banach space structure. As we consider the integral of strongly measurable functions with respect to measures in $\mathcal{M}(S;X)$ the natural norm to be considered is the total variation norm. We refer to \cite{DiestelUhl, Ryan} for the definition of the total semi-variation and the corresponding theory of weak integration.
\end{rem}
In the next proposition  we deal with vector measures constructed as 
\begin{equation}\label{eq:density1}
\mu(E) = \int_E u(s)\, d\sigma(s)\,,
\end{equation}
for all $E \subset S$ Borel,  where $\sigma \in \mathcal{M}_+(S)$ and $u \in L^1((S,\sigma); X)$. Note that the integral above is defined as the Bochner integral of $u \in L^1((S,\sigma); X)$. Following the notation for classical measures,
given $\mu \in \mathcal{M}(S;X)$ defined as in \eqref{eq:density1} we write $\mu = u \sigma$. Moreover, we call $u$ the density of $\mu$ with respect to $\nu$ and we denote it by $u = \dfrac{\mu}{\sigma}$. Note that $u$ is unique $\sigma$ almost everywhere due to \cite[Corollary 5]{DiestelUhl}. 

\begin{prop}\label{prop:totalvariationproduct}
Let $\Sigma$ be the Borel $\sigma$-algebra of $S$, $\sigma \in \mathcal{M}_+(S)$ and $u \in L^1((S,\sigma); X)$. Then, the map $\mu : \Sigma \rightarrow X$ defined in \eqref{eq:density1}
belongs to  $\mathcal{M}(S;X)$ and its variation is given by
\begin{equation}\label{eq:totalvariationproduct}
|\mu|(E) =  \int_E \|u(s)\|_X \, d\sigma
\end{equation}
for every $E \subset S$ measurable set.
In particular, $\|u(s)\|_X > 0$ for $|\mu|$-a.e. $s \in S$. Moreover, $\mu \ll |\mu|$ and
\begin{equation}\label{eq:density}
\mu = \hat u |\mu|\,,
\end{equation}
where $\hat u \in L^1((S,|\mu|); X)$ is defined as
$\hat u(s) = \frac{u(s)}{\|u(s)\|_X}$ for $|\mu|$-a.e. $s \in S$.
\end{prop}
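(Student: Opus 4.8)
The plan is to first establish that $\mu$ is a well-defined countably additive $X$-valued measure, then compute its variation by a two-sided estimate, and finally read off the remaining assertions from the resulting identity $d|\mu| = \|u\|_X\, d\sigma$. To begin, I would verify countable additivity: given pairwise disjoint Borel sets $(E_i)_i$ with union $E$, the partial sums satisfy $\sum_{i=1}^n \mu(E_i) = \int_S \mathbbm{1}_{\bigcup_{i\le n}E_i}\, u\, d\sigma$, and since $\mathbbm{1}_{\bigcup_{i\le n}E_i}\, u \to \mathbbm{1}_E\, u$ pointwise with $\|\mathbbm{1}_{\bigcup_{i\le n}E_i}\, u\|_X \le \|u\|_X \in L^1((S,\sigma);\R)$, the dominated convergence theorem for the Bochner integral yields $\sum_{i=1}^n \mu(E_i) \to \mu(E)$ strongly in $X$. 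Together with $\mu(\emptyset)=0$ this shows that $\mu$ is a Borel measure in the sense recalled in this appendix.

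Next I would prove the variation formula~\eqref{eq:totalvariationproduct} by two inequalities. For the upper bound, for any finite partition $(A_i)$ of $E$ the triangle inequality for the Bochner integral gives $\sum_i \|\mu(A_i)\|_X = \sum_i \big\|\int_{A_i} u\, d\sigma\big\|_X \le \sum_i \int_{A_i}\|u\|_X\, d\sigma = \int_E \|u\|_X\, d\sigma$, and taking the supremum over partitions yields $|\mu|(E) \le \int_E \|u\|_X\, d\sigma$. In particular $|\mu|(S)\le \|u\|_{L^1((S,\sigma);X)}<\infty$, so $\mu\in\mathcal{M}(S;X)$ and, by \cite[Proposition 9]{DiestelUhl}, $|\mu|$ is a finite positive measure. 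For the lower bound — the crux of the argument — I would exploit that strong measurability makes $u$ essentially separably valued: fixing $\epsilon>0$ and a countable dense subset $\{x_k\}$ of its essential range, the Borel sets $E_k := \{s\in E:\|u(s)-x_k\|_X<\epsilon\}\setminus\bigcup_{j<k}E_j$ partition $E$ up to a $\sigma$-null set and satisfy $\|u(s)-x_k\|_X<\epsilon$ on $E_k$. Hence $\|\mu(E_k)\|_X \ge \|x_k\|_X\,\sigma(E_k) - \int_{E_k}\|u-x_k\|_X\, d\sigma \ge \int_{E_k}\|u\|_X\, d\sigma - 2\epsilon\,\sigma(E_k)$, and using $|\mu|(E_k)\ge\|\mu(E_k)\|_X$ together with the countable additivity of $|\mu|$ gives $|\mu|(E)=\sum_k |\mu|(E_k) \ge \int_E \|u\|_X\, d\sigma - 2\epsilon\,\sigma(E)$; letting $\epsilon\to 0$ closes the gap.

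The remaining statements then follow from $|\mu|(E)=\int_E\|u\|_X\, d\sigma$. Since $|\mu|(\{\|u\|_X=0\})=\int_{\{\|u\|_X=0\}}\|u\|_X\, d\sigma=0$, we get $\|u(s)\|_X>0$ for $|\mu|$-a.e.\ $s$, so $\hat u := u/\|u\|_X$ is well-defined $|\mu|$-a.e., and $\int_S\|\hat u\|_X\, d|\mu| = |\mu|(S)<\infty$ shows $\hat u\in L^1((S,|\mu|);X)$. Absolute continuity $\mu\ll|\mu|$ is immediate from $\|\mu(E)\|_X\le|\mu|(E)$. Finally, reading $\|u\|_X$ as the (scalar) density of $|\mu|$ with respect to $\sigma$ and applying the corresponding change of variables for the Bochner integral, $\int_E\hat u\, d|\mu| = \int_E \tfrac{u}{\|u\|_X}\,\|u\|_X\, d\sigma = \int_E u\, d\sigma = \mu(E)$, which is~\eqref{eq:density}. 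I expect the only genuinely delicate step to be the lower bound on the variation via the $\epsilon$-almost-constant partition; every other point reduces to the triangle inequality for, and the change of variables in, the Bochner integral.
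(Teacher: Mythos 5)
Your argument is correct, and its overall architecture (upper bound on the variation via the triangle inequality, identification of the density, the a.e.\ positivity of $\|u\|_X$, and the change of variables giving \eqref{eq:density}) matches the paper's. The one place where you genuinely diverge is the variation identity \eqref{eq:totalvariationproduct}: the paper simply cites \cite[Proposition 5.4]{Ryan} for it, whereas you prove it from scratch, and the $\epsilon$-almost-constant partition built from the essential separable range of $u$ is exactly the standard proof of that cited result. What your route buys is self-containedness and a transparent view of where strong (Pettis) measurability of $u$ enters; what the citation buys is brevity. Two small points worth a sentence each if you write this up: (i) the sets $E_k$ cover $E$ only up to a $\sigma$-null set $N$, and you should note that $N$ is harmless because the already-established upper bound gives $|\mu|(N)\le\int_N\|u\|_X\,d\sigma=0$, so neither side of the identity sees it; (ii) the countable additivity of $|\mu|$ that you invoke when summing over $k$ is legitimate only after you know $\mu$ has finite variation, which is why the upper bound must come first --- your ordering already respects this, but the dependence deserves to be made explicit.
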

\begin{proof}
First the verification that $\mu$ belongs to  $\mathcal{M}(S;X)$ is straightforward and \eqref{eq:totalvariationproduct} follows directly from Proposition 5.4 in \cite{Ryan}.
Notice that by the properties of the Bochner integral we have the following inequality
\begin{equation}
\|\mu(E)\|_X \leq \int_E \|u(s)\|_X \, d\sigma(s) = |\mu|(E) 
\end{equation}
implying that $\mu \ll |\mu|$. 
We now show \eqref{eq:density}. 
 Indeed, thanks to  \eqref{eq:totalvariationproduct} we first notice that $\|u(s)\|_X > 0$ for $|\mu|-a.e.$ $s \in S$. Thus, for every measurable set $E$ there holds
\begin{align*}
\int_{E} \frac{u(s)}{\|u(s)\|_X} \, d|\mu|(s) =  \int_{E}  u(s) \, d\sigma(s) = \mu(E)\,,
\end{align*}
as we wanted to prove.
\end{proof}

An important property for measures with values in a Banach space $X$ is the validity of the Radon--Nikod\'ym theorem. 
\begin{dfnz}[Radon--Nikod\'ym property]\label{def:rd}
The Banach space $X$ has the Radon--Nikod\'ym property if for every set $\Omega$, every $\sigma$-algebra $\Sigma$ on $\Omega$ and every measure $\mu :\Sigma \rightarrow \R$ of bounded variation such that $\mu \ll \sigma$ for $\sigma :\Sigma \rightarrow \R$ a finite positive measure, there exists $u \in L^1((\Omega,\sigma); X)$ such that $\mu =u \sigma$.
\end{dfnz}
The Radon--Nikod\'ym property is true for separable dual Banach spaces due to the classical Dunford--Pettis theorem, see Theorem 1 \cite[III.3]{DiestelUhl}.
\begin{thm}[Dunford--Pettis]\label{eq:DP}
Separable dual Banach spaces have the Radon--Nikod\'ym property.
\end{thm}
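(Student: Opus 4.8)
Since this is the classical Dunford--Pettis theorem, the plan is not to reinvent it but to recall the standard argument and point to the one genuinely delicate step. Write $X=(\predual{X})^{*}$ and set $W:=\predual{X}$; as $X$ is a separable dual, its predual $W$ is separable as well. Let $\mu:\Sigma\to X$ be of bounded variation with $\mu\ll\sigma$, where $\sigma$ is a finite positive measure. First I would reduce everything to the scalar Radon--Nikod\'ym theorem applied ``coordinate-wise'': for each fixed $w\in W$ the set function $E\mapsto\langle w,\mu(E)\rangle$ is a scalar measure of bounded variation with $E\mapsto\langle w,\mu(E)\rangle\ll\sigma$, so there is a density $\phi_{w}\in L^{1}(\sigma)$ with $\langle w,\mu(E)\rangle=\int_{E}\phi_{w}\,d\sigma$. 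Since $|\mu|\in\mathcal{M}_{+}(S)$ and $|\mu|\ll\sigma$, the scalar theorem also produces $g:=\frac{d|\mu|}{d\sigma}\in L^{1}_{+}(\sigma)$, and from $|\langle w,\mu(E)\rangle|\le\|w\|_{W}\,|\mu|(E)$ one obtains the pointwise control $|\phi_{w}(s)|\le\|w\|_{W}\,g(s)$ for $\sigma$-a.e.\ $s$.

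The next step is to assemble the scalar densities into a single $X$-valued function. The map $w\mapsto\phi_{w}$ is linear from $W$ into $L^{1}(\sigma)$, so I would fix a countable $\mathbb{Q}$-linear dense subset $\{w_{n}\}\subset W$ and remove a single $\sigma$-null set $N$ outside of which both the $\mathbb{Q}$-linearity of $s\mapsto\phi_{w_{n}}(s)$ and the bounds $|\phi_{w_{n}}(s)|\le\|w_{n}\|_{W}\,g(s)$ hold simultaneously. For $s\notin N$ the assignment $w_{n}\mapsto\phi_{w_{n}}(s)$ is then a $\mathbb{Q}$-linear functional on $\{w_{n}\}$ bounded by $g(s)\|\cdot\|_{W}$, so it extends uniquely to an element $u(s)\in W^{*}=X$ with $\|u(s)\|_{X}\le g(s)$. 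By construction $s\mapsto\langle w,u(s)\rangle$ is measurable for every $w\in W$, i.e.\ $u$ is weak*-measurable, it is dominated in norm by $g\in L^{1}(\sigma)$, and it satisfies the Gelfand (weak*) identity $\langle w,\mu(E)\rangle=\int_{E}\langle w,u(s)\rangle\,d\sigma(s)$ for all $w\in W$ and all Borel $E$.

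The hard part will be upgrading this weak*-measurable Gelfand density to a genuinely strongly measurable, Bochner-integrable density, so that $\mu(E)=\int_{E}u\,d\sigma$ holds as a Bochner integral; this is precisely the place where separability of the predual $W$ (and not of $X$, which can fail for a dual space) is indispensable. I would settle it through the Pettis measurability theorem, whose hypotheses are weak measurability and essential separable-valuedness: the latter is the crux, and I would obtain it by passing through a lifting of $L^{\infty}(|\mu|)$, which yields a version of $u$ with values in a norm-separable subset of $X$, after which weak*-measurability against the separable set $\{w_{n}\}$ upgrades to strong measurability and the integrand becomes Bochner integrable with $\|u\|_{X}\le g\in L^{1}(\sigma)$. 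A cleaner but less constructive alternative is to bypass the pointwise density altogether and prove the RNP via dentability: by Banach--Alaoglu every bounded weak*-closed convex subset of $X=W^{*}$ is weak*-compact, and separability of $W$ lets one cut arbitrarily small weak*-open slices using functionals from the predual, which is exactly dentability of bounded sets; the Rieffel characterization of the RNP in terms of dentability then gives the claim. In either approach, once Bochner integrability of $u$ is secured the Gelfand identity immediately promotes to the strong equality $\mu(E)=\int_{E}u\,d\sigma$, completing the proof.
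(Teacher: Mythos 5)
The paper does not actually prove this statement; it is the classical Dunford--Pettis theorem and is simply cited from \cite[Theorem 1, III.3]{DiestelUhl}, so there is no in-paper argument to compare against. Your first two steps are correct and essentially complete: the coordinate-wise scalar Radon--Nikod\'ym densities $\phi_w$ with the domination $|\phi_w|\le \|w\|\,g$, and the assembly over a countable $\mathbb{Q}$-linear dense subset of the predual into a weak*-measurable Gelfand density $u$ with $\|u(s)\|_X\le g(s)$ and $\langle w,\mu(E)\rangle=\int_E\langle w,u\rangle\,d\sigma$.

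The genuine gap is at the step you yourself flag as the hard part. You assert that what is ``indispensable'' there is the separability of the predual $\predual{X}$ ``and not of $X$, which can fail for a dual space.'' This is backwards, and the error matters: $\ell^\infty=(\ell^1)^*$ has a separable predual and fails the Radon--Nikod\'ym property, so separability of $\predual{X}$ alone cannot carry the argument. The hypothesis of the theorem (and the way it is used in Remark \ref{rem:rnholds}) is that $X$ itself is separable, and that is precisely what delivers essential separable-valuedness for the Pettis measurability theorem --- a lifting of $L^\infty$ does not produce a separably-valued version by itself; it only gives a coherent pointwise-linear selection, which you have already obtained more simply via the countable dense set. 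The correct division of labour is: separability of $X$ makes $u$ automatically separably valued, and separability of $\predual{X}$ supplies a countable norming family $\{w_n\}$ so that $s\mapsto\|u(s)-x\|=\sup_n|\langle w_n,u(s)-x\rangle|/\|w_n\|$ is measurable, upgrading weak*-measurability to strong measurability; Bochner integrability then follows from $\|u\|_X\le g\in L^1(\sigma)$, and the Gelfand identity promotes to $\mu(E)=\int_E u\,d\sigma$ because $\predual{X}$ separates points of $X$. Your alternative dentability route has the same flaw in disguise: weak*-compactness plus separability of the predual yields small \emph{weak*-open} slices, but dentability requires slices of small \emph{norm} diameter, and obtaining those again uses the norm-separability of $X$ (this is the nontrivial content of the Rieffel/Huff--Davis--Phelps circle of results, not a consequence of Banach--Alaoglu).
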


We conclude this section with the Lebesgue decomposition theorem adapted to measures with values in Banach spaces \cite[Theorem 9]{DiestelUhl}.

\begin{thm}[Lebesgue decomposition theorem]\label{thm:lebesguethm}
Given $\mu \in \mathcal{M}(S;X)$ and $\sigma \in \mathcal{M}_+(S)$, there exist unique $\mu^a, \mu^s \in   \mathcal{M}(S;X)$ such that 
\begin{itemize}
\item $\mu^a \ll \sigma$,
\item $\varphi \circ \mu^s \in \mathcal{M}(S)$ and $\sigma$ are  mutually singular  for every $\varphi \in X^*$,
\item $\mu = \mu^a + \mu^s$.
\end{itemize}
Additionally, $|\mu| = |\mu^a| + |\mu^s|$, and $|\mu^s|$ and $\sigma$ are mutually singular.
\end{thm}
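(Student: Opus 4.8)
The plan is to reduce the vector-valued statement to the classical scalar Lebesgue decomposition applied to the finite positive variation measure $|\mu| \in \mathcal{M}_+(S)$, and then to recover the vector decomposition by restricting $\mu$ to the supporting sets produced in the scalar case. Concretely, I would first invoke the classical Lebesgue decomposition for positive finite measures to write $|\mu| = \lambda^a + \lambda^s$ with $\lambda^a \ll \sigma$ and $\lambda^s \perp \sigma$. Mutual singularity yields a Borel set $N \subset S$ with $\sigma(N) = 0$ and $\lambda^s(S\smallsetminus N) = 0$; since $\lambda^a \ll \sigma$ and $\sigma(N)=0$ force $\lambda^a(N) = 0$, one gets $\lambda^a = |\mu|\res(S\smallsetminus N)$ and $\lambda^s = |\mu|\res N$.

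For existence I would then define $\mu^a := \mu \res (S\smallsetminus N)$ and $\mu^s := \mu \res N$, which both belong to $\mathcal{M}(S;X)$. The decomposition $\mu = \mu^a + \mu^s$ is immediate from $S = N \sqcup (S\smallsetminus N)$. Using the standard identity $|\mu\res A| = |\mu|\res A$ for vector measures, I obtain $|\mu^a| = \lambda^a$ and $|\mu^s| = \lambda^s$, which simultaneously gives $|\mu| = |\mu^a| + |\mu^s|$ and $|\mu^s| = \lambda^s \perp \sigma$, settling the ``additionally'' clause. Absolute continuity $\mu^a \ll \sigma$ follows because $|\mu^a| = \lambda^a \ll \sigma$ and $\mu^a \ll |\mu^a|$ (the easy direction: $\sigma(E)=0 \Rightarrow |\mu^a|(E)=0 \Rightarrow \mu^a(E)=0$). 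Finally, since $\mu^s$ is concentrated on $N$ with $\sigma(N)=0$, for every $\varphi \in X^*$ the scalar measure $\varphi\circ\mu^s$ is also concentrated on $N$ and hence mutually singular with $\sigma$; this verifies the second bullet.

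The genuinely delicate part is uniqueness, which I expect to be the main obstacle. Suppose $(\mu_1^a,\mu_1^s)$ and $(\mu_2^a,\mu_2^s)$ both satisfy the three properties and set $\nu := \mu_1^a - \mu_2^a = \mu_2^s - \mu_1^s$. On the one hand $|\nu| \leq |\mu_1^a| + |\mu_2^a| \ll \sigma$, using that for vector measures $\mu_i^a \ll \sigma$ upgrades to $|\mu_i^a| \ll \sigma$ by the same partition argument as above. On the other hand I must show $|\nu| \perp \sigma$, and for this the bare hypothesis $\varphi\circ\mu_i^s \perp \sigma$ for all $\varphi$ must be promoted to singularity of the total variation $|\mu_i^s|$. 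Here I would exploit that $X=(\predual{X})^*$ with $\predual{X}$ separable, so there is a countable norming family $\{\varphi_n\} \subset \predual{X} \subset X^*$ with $\|x\|_X = \sup_n \langle \varphi_n, x\rangle$; writing $\mu_i^s = g_i |\mu_i^s|$ via the Radon--Nikod\'ym property (Remark \ref{rem:rnholds}) with $\|g_i\|_X = 1$ $|\mu_i^s|$-a.e., the singularity of each $\varphi_n\circ\mu_i^s$ provides a $\sigma$-null set $N_n$ off which $\langle\varphi_n, g_i\rangle = 0$ $|\mu_i^s|$-a.e. Taking $N := \bigcup_n N_n$ (still $\sigma$-null) and invoking the norming property forces $\|g_i\|_X = 0$ off $N$, contradicting $\|g_i\|_X=1$ unless $|\mu_i^s|(S\smallsetminus N)=0$, i.e. $|\mu_i^s|\perp\sigma$. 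Then $|\nu| \leq |\mu_1^s| + |\mu_2^s| \perp \sigma$, so $|\nu|$ is simultaneously absolutely continuous and singular with respect to $\sigma$, whence $|\nu|=0$ and the two decompositions coincide. Alternatively one may simply cite \cite[Theorem 9]{DiestelUhl}, but the reduction above keeps the argument self-contained within the separable-dual setting assumed throughout the paper.
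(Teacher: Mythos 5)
Your proof is correct, but note that the paper itself contains no proof of this statement: it is quoted verbatim from \cite[Theorem 9]{DiestelUhl}, so the comparison is with that general reference rather than with an in-paper argument. Your route --- scalar Lebesgue decomposition of $|\mu|$ relative to $\sigma$, then restriction $\mu^a := \mu \res (S\smallsetminus N)$, $\mu^s := \mu \res N$ together with the identity $|\mu \res A| = |\mu| \res A$ --- is sound, and it buys a fully self-contained existence proof that, as you implicitly exploit, needs no Radon--Nikod\'ym property at all. Your uniqueness argument is also valid, but heavier than necessary: you invoke the RNP and a countable norming family in $\predual{X}$ to promote the hypothesis $\varphi \circ \mu_i^s \perp \sigma$ (for all $\varphi$) to $|\mu_i^s| \perp \sigma$. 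A shortcut avoids this entirely: setting $\nu := \mu_1^a - \mu_2^a = \mu_2^s - \mu_1^s$, for each fixed $\varphi \in X^*$ the scalar measure $\varphi \circ \nu$ is absolutely continuous with respect to $\sigma$ (since $\nu \ll \sigma$ in the null-set sense) and simultaneously singular with respect to $\sigma$ (a difference of two $\sigma$-singular scalar measures is $\sigma$-singular), hence $\varphi \circ \nu = 0$; since $\varphi \in X^*$ is arbitrary, Hahn--Banach yields $\nu(E) = 0$ for every Borel $E$. This scalar reduction works in an arbitrary Banach space, matching the generality of \cite[Theorem 9]{DiestelUhl}, whereas your promotion argument is genuinely tied to the separable-dual setting (which is, admittedly, the standing assumption of the paper, so nothing is lost here). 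Two minor points of hygiene: the norming identity should read $\|x\|_X = \sup_n |\langle \varphi_n, x\rangle|$, or else the countable dense family in the unit ball of $\predual{X}$ should be taken symmetric under negation; and in the existence part one should record, as you do, that $|\varphi \circ \mu^s| \leq \|\varphi\|_{X^*}\, |\mu^s|$ so that $\varphi \circ \mu^s \in \mathcal{M}(S)$, which settles the first half of the second bullet.
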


\subsection{Duality theory}\label{subsec:duality}
Define the space $C(S;X)$ to be the set of continuous functions on $S$ with values in $X$ endowed with the norm topology given by
\begin{equation}
\|T\|_{C(S;X)} := \sup_{s \in S} \|T(s)\|_X
\end{equation}
for $T \in C(S;X)$. Here we are interested in the identification of $C(S;X)^*$ with the space of vector measures $\M(S;X^*)$, if $S$ is a non-empty compact set of $\R^d$. This is a classical result due to Singer \cite{singer1957linear} that we state here for the reader's convenience. See also, for instance, \cite{Meziani09} or \cite[Section 6.5]{DiestelUhl}. Note that since $S$ is non-empty and compact, the space $\mathcal{M}(S;X^*)$ coincides with the space of regular $X^*$-valued Borel measures of finite total variation. 
\begin{thm}\label{thm:duality}
Let $S$ be a compact Hausdorff space and $X$ a Banach space. Then, there exists an isometric isomorphism $T$ between $\mathcal{M}(S;X^*)$ and $C(S;X)^*$ such that
\begin{align*}
T(\mu)(f)= \int_S f \,d\mu , \quad \text{for every }\  \mu \in \mathcal{M}(S, X^*),\  f \in C(S;X)
\end{align*}
and
\begin{align*}
 \|\mu\|_{\mathcal{M}(S,X^*)} = \|T(\mu)\|_{C(S,X)^*} = \sup_{\|f\|_{C(S;X)} \leq 1} \int_S f \, d\mu\, ,\quad \text{for every }\  \mu \in \mathcal{M}(S, X^*)\,.
\end{align*}
\end{thm}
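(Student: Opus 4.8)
The plan is to prove the statement as a vector-valued Riesz representation theorem (Singer's theorem), establishing separately that the integration map $T$ is a well-defined isometry and that it is surjective. Throughout, the pairing $\int_S f\, d\mu$ for $f \in C(S;X)$ and $\mu \in \mathcal{M}(S;X^*)$ is understood via the duality between $X$ and $X^*$: for a simple function $f = \sum_i x_i \mathbbm{1}_{E_i}$ one sets $\int_S f\, d\mu = \sum_i \langle \mu(E_i), x_i\rangle$, and this extends to all of $C(S;X)$ by uniform approximation of continuous functions by simple ones, which is possible since $S$ is compact and $f$ is therefore uniformly continuous and bounded. The estimate $|\int_S f\, d\mu| \le \|f\|_{C(S;X)}\, |\mu|(S)$ is immediate from the definition of total variation in \eqref{eq:totalvariation}, so $T(\mu) \in C(S;X)^*$ with $\|T(\mu)\|_{C(S;X)^*} \le \|\mu\|_{\mathcal{M}(S;X^*)}$, and linearity of $T$ is clear.

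For the reverse inequality (and hence the isometry and the final supremum formula), I would fix $\mu$ and $\varepsilon > 0$ and choose a finite Borel partition $(A_i)_{i=1}^n$ of $S$ with $\sum_{i} \|\mu(A_i)\|_{X^*} > |\mu|(S) - \varepsilon$. Selecting $x_i \in X$ with $\|x_i\| \le 1$ and $\langle \mu(A_i), x_i\rangle > \|\mu(A_i)\|_{X^*} - \varepsilon/n$, I would use the regularity of the finite positive measure $|\mu|$ to replace the indicators $\mathbbm{1}_{A_i}$ by continuous scalar cut-off functions subordinate to a suitable cover of $S$, assembling $f \in C(S;X)$ with $\|f\|_{C(S;X)} \le 1$ for which $\int_S f\, d\mu$ differs from $\sum_i \langle \mu(A_i), x_i\rangle$ by at most a multiple of $\varepsilon$. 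Letting $\varepsilon \to 0$ yields $\|T(\mu)\|_{C(S;X)^*} \ge |\mu|(S)$, completing the isometry.

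Surjectivity is the heart of the argument. Given $L \in C(S;X)^*$, for each $x \in X$ the functional $\phi \mapsto L(\phi\, x)$ on $C(S)$ is bounded by $\|L\|\,\|x\|$, so the classical scalar Riesz representation theorem produces a unique regular Borel measure $\nu_x$ with $L(\phi\, x) = \int_S \phi\, d\nu_x$ and $\|\nu_x\| \le \|L\|\,\|x\|$. By uniqueness, $x \mapsto \nu_x$ is linear, which lets me define a set function $\mu$ through $\langle \mu(E), x\rangle := \nu_x(E)$; then $\mu(E) \in X^*$ with $\|\mu(E)\|_{X^*} \le \|L\|$. The same partition estimate as above, applied now to bound $\sum_i \|\mu(A_i)\|_{X^*}$ by testing against $L$, shows $|\mu|(S) \le \|L\| < \infty$, so $\mu$ has finite variation. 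Finally $T(\mu) = L$ holds on all functions of the form $\phi\, x$ by construction, hence on finite sums $\sum_j \phi_j x_j$ by linearity; since such sums are dense in $C(S;X)$, continuity of both sides gives $T(\mu) = L$ on $C(S;X)$.

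The main obstacle will be verifying that $\mu$ is genuinely an element of $\mathcal{M}(S;X^*)$, i.e.\ that it is \emph{strongly} countably additive rather than merely weak*-countably additive. Scalar countable additivity of each $\nu_x$ yields $\langle \mu(\bigcup_i E_i), x\rangle = \sum_i \langle \mu(E_i), x\rangle$ for every $x$, that is, weak* countable additivity; upgrading this to norm convergence of $\sum_i \mu(E_i)$ in $X^*$ I would obtain from the Orlicz--Pettis theorem together with the finite total variation just established. The density of finite sums $\sum_j \phi_j x_j$ in $C(S;X)$, needed to identify $T(\mu)$ with $L$, also deserves care: it follows by covering the compact set $S$ by finitely many open sets on which a uniformly continuous $f$ oscillates by less than $\varepsilon$, choosing a subordinate partition of unity $(\phi_j)$ and sample points $s_j$, and approximating $f$ by $\sum_j \phi_j f(s_j)$. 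Collecting these facts, $T$ is the claimed isometric isomorphism, and the stated norm identity is exactly the isometry rewritten through the definition of the operator norm of $T(\mu)$.
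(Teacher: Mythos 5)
The paper itself gives no proof of Theorem \ref{thm:duality}: it is stated ``for the reader's convenience'' as a classical result of Singer, with pointers to \cite{singer1957linear} and \cite[Section 6.5]{DiestelUhl}. Your outline reconstructs precisely that classical argument --- integration of simple functions against $\mu$, the partition/Urysohn estimate for the isometry, and, for surjectivity, the scalar Riesz representation applied to $\phi \mapsto L(\phi\,x)$, the weak*-defined set function $\langle \mu(E), x\rangle := \nu_x(E)$, the variation bound $|\mu|(S) \leq \|L\|$, and density of sums $\sum_j \phi_j x_j$ in $C(S;X)$ via partitions of unity. All of this is sound and correctly identifies where the real work lies.

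There is, however, one genuine flaw at exactly the step you flag as the main obstacle: the Orlicz--Pettis theorem does \emph{not} upgrade weak* countable additivity to norm countable additivity. Orlicz--Pettis concerns the weak topology, i.e.\ subseries convergence tested against all of $X^{**}$, whereas your $\mu$ is a priori countably additive only against the predual elements $x \in X$. The weak* version of the statement is false: in $\ell^\infty = (\ell^1)^*$ the series $\sum_n e_n$ of coordinate vectors is weak* subseries convergent (every subseries converges weak* to the indicator of the corresponding index set), yet $\|e_n\|_\infty = 1$, so no subseries converges in norm. The repair uses the finite variation you have already established, but directly rather than through Orlicz--Pettis: for a disjoint sequence $(E_i)_i$ with union $E$ and any finite Borel partition $(A_j)_j$ of $E$, weak* countable additivity and weak* lower semicontinuity of the dual norm give $\|\mu(A_j)\|_{X^*} \leq \liminf_n \big\| \sum_{i \leq n} \mu(A_j \cap E_i) \big\|_{X^*} \leq \sum_i \|\mu(A_j \cap E_i)\|_{X^*}$; summing over $j$ and taking the supremum over partitions yields $|\mu|(E) \leq \sum_i |\mu|(E_i)$, which together with the obvious superadditivity makes $|\mu|$ countably additive. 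Then $\big\| \mu(E) - \sum_{i \leq n} \mu(E_i) \big\|_{X^*} = \big\| \mu\big(\bigcup_{i > n} E_i\big) \big\|_{X^*} \leq |\mu|\big(\bigcup_{i > n} E_i\big) \to 0$, giving norm countable additivity of $\mu$, as required for membership in $\mathcal{M}(S;X^*)$ in the sense of \eqref{eq:totalvariation}. One further small point: since the theorem is stated for a general compact Hausdorff $S$, membership in $\mathcal{M}(S;X^*)$ in Singer's theorem means \emph{regular} measures, so you should also verify regularity of $|\mu|$ (deducible from the regularity of the scalar measures $\nu_x$ via the same partition estimate); in the setting the paper actually uses, $S \subset \R^d$ compact, this is automatic, as the paper remarks before the theorem.
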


In light of \cref{thm:duality} we say that a sequence of regular $X^*$-valued vector measures on $S$ denoted by $\{\mu^n\}_n$ weakly* converges to $\mu$ if
\begin{align}\label{eq:weakconver}
\lim_{n\rightarrow +\infty} \int_S f \,d\mu^n = \int_S f \,d\mu \quad \text{for every } f  \in C(S;X)\,.
\end{align}
We also have the following remark.
\begin{rem}\label{rem:consweakstar}
\cref{thm:duality} and the definition of weak* convergence in \eqref{eq:weakconver} imply that the total variation norm of a regular $X^*$-valued Borel measure on $S$ is  weakly-* lower semicontinuous.
Moreover choosing $f(s) = v \in X$ in \eqref{eq:weakconver} we obtain that 
 \begin{align*}
 \lim_{n\rightarrow +\infty} \langle \mu^n (S), v \rangle = \lim_{n\rightarrow +\infty} \int_S v\, d\mu^n = \int_S v\,  d\mu  = \langle \mu (S), v \rangle
\end{align*}
for every $v \in X$, implying that $\mu^n(S)$ converges weakly* to $\mu(S)$.
\end{rem}
%
%
%
%
%
%
%
%
%
%

\section{Fractional-Laplacian-type operators}\label{app:fraclap}
In this section of the appendix we recall basic results and definitions about fractional Sobolev spaces, fractional Laplacian operators as well as necessary functional analytic tools required in the paper. We also consider the energies obtained as the $L^2$-norm of fractional-Laplacian-type and characterize the extremal points of their balls. For more details about fractional Sobolev spaces and fractional Laplacians we refer to \cite{di2012hitchhikers}.

\subsection{\texorpdfstring{Periodic $L^2$-functions}{Periodic L2-functions}}\label{sec:periodicl2}

For $q \in \N$ we denote by $T^q$ the $(2\pi)$-periodic torus defined by a quotient procedure as $(\R / 2\pi\Z)^q$. We denote by $L^2(T^q)$ the set of $L^2$ functions on $T^q$ defined as
\begin{equation}
L^2(T^q) = \left\{v : T^q \rightarrow \R \ \text{measurable}: \int_{T^q} |v(x)|^2\, dx < \infty\right\}\,.
\end{equation}
We identify the space $L^2(T^q)$ with the space of equivalence classes of $[0,2\pi)^q$-periodic functions $v : \R^q \rightarrow \R$  such that 
\begin{equation}
 \int_{(0,2\pi)^q} |v(x)|^2\, dx < \infty\,.
\end{equation}
We endow $L^2(T^q)$ with a Hilbert structure by means of the scalar product
\begin{equation}
\langle v , w \rangle_{L^2(T^q)} = \int_{(0,2\pi)^q} v(x) w(x)\, dx \quad \text{for } v, w \in L^2(T^q)
\end{equation}
inducing the norm
\begin{equation}
\|v\|_{L^2(T^q)} = \sqrt{\int_{(0,2\pi)^q} |v(x)|^2\, dx}\,.
\end{equation}
The space $L^2(T^q)$ is a separable Hilbert space and its predual is $L^2(T^q)$ itself.
We also introduce the $L^2$-periodic functions with zero mean, defined as
\begin{equation}
L_\circ ^2(T^q) = \left\{u\in L^2(T^q) : \int_{(0,2\pi)^q} v(x)\, dx = 0\right\}\,.
\end{equation}
Notice that $L_\circ ^2(T^q)$ is a closed subspace of $L^2(T^q)$. Therefore, $L_\circ ^2(T^q)$ is a separable Hilbert space as well. 

\subsection{\texorpdfstring{Fourier series of periodic $L^2$-functions}{Fourier series of periodic L2-functions}}

For $m \in \Z^q$ we define the $m$-th Fourier coefficient of $v \in L^2(T^q)$ as
\begin{equation}
\mathcal{F} v(m) = \hat v(m) = \frac{1}{(2\pi)^q} \int_{(0,2 \pi)^q} v(x) e^{-ix\cdot m}\, dx\,.
\end{equation}
It is standard to show that for every $v \in L^2(T^q)$ the Fourier series 
$\sum_{m \in \Z^q} \hat v(m)e^{ix\cdot m}$
converges to $v$ in $L^2(T^q)$ and we write
$v(x) = \sum_{m \in \Z^q} \hat v(m)e^{ix\cdot m}$.
Moreover, Parseval's theorem holds for every $v,w \in L^2(T^q)$, i.e.,
\begin{equation}\label{eq:plancherel}
\langle v ,w \rangle_{L^2(T^q)} = (2\pi)^q \langle \hat v , \hat w \rangle_{\ell^2(\Z^q)}\,.
\end{equation}

\subsection{Periodic fractional Sobolev functions}
We now define fractional Sobolev functions on the periodic domain $T^q$ for $q \in \N$.
\begin{dfnz}[Fractional Sobolev functions on periodic domains]
Given a function $v: T^q \rightarrow \R$ and $s \geq 0$,  we say that $v \in H^s(T^q)$ if $v \in L^2(T^q)$  and
\begin{equation}
[v]^2_{H^s(T^q)} := \sum_{m \in \Z^q} |m|^{2s} |\hat v(m)|^2 < \infty\,.
\end{equation}
The space $H^s(T^q)$ is a Hilbert space with the scalar product defined as 
\begin{equation}
\langle v, w\rangle_{H^s(T^q)} = \langle v, w\rangle_{L^2(T^q)} + \sum_{m \in \Z^q} |m|^{2s} \hat v(m) \overline{\hat w(m)}
\end{equation}
inducing a norm defined as $\|v\|_{H^s(T^q)} = \sqrt{\|v\|^2_{L^2(T^q)} + [v]^2_{H^s(T^q)}}$.
\end{dfnz}

\begin{rem}
Notice that for $s = 0$ the space $ H^s(T^q)$ is isomorphic to $L^2(T^q)$ and $\|v\|_{H^0(T^q)} = \sqrt{1 + (2\pi)^{-q}} \|v\|_{L^2(T^q)}$ for every $v \in L^2(T^q)$. 
\end{rem}

\subsection{The fractional Laplacian}

We define the fractional Laplacian in periodic domains as follows.
\begin{dfnz}[Fractional Laplacian]
Given $v \in  H^{2s}(T^q)$ and $s \in (0,1]$ we define the fractional Laplacian of $v$ as
\begin{equation}
(-\Delta^{s})v(x) = \sum_{m \in \Z^q} |m|^{2s} \hat v(m) e^{ix \cdot m}  \,.
\end{equation} 
\end{dfnz}

\begin{rem}\label{rem:frac-Hs}
According to the previous definitions it is easy to check that $(-\Delta^{s})v \in L^2(T^q)$ with
\begin{equation}
\|(-\Delta^{s})v\|_{L^2(T^q)} = (2\pi)^{q/2} [v]_{H^{2s}(T^q)} \,.
\end{equation}
\end{rem}
Using Parseval's theorem it is easy to show that the following version of the Poincar\'e inequality holds.
\begin{thm}[Poincar\'e inequality]\label{thm:poinc}
For every $0 \leq s \leq 1$ there holds
\begin{equation}\label{eq:po}
\|v\|_{L^2(T^q)} \leq \|(-\Delta^{s})v\|_{L^2(T^q)}
\end{equation}
for every $v \in H^{2s}(T^q)$ with zero mean.
\end{thm}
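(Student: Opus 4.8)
The plan is to transfer the whole inequality to the Fourier side via Parseval's identity, whereupon the zero-mean hypothesis removes exactly the frequency $m=0$ and the estimate collapses to an elementary termwise comparison.

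First I would rewrite both norms in terms of Fourier coefficients. Applying Parseval's theorem \eqref{eq:plancherel} to the left-hand side gives
\begin{equation*}
	\|v\|_{L^2(T^q)}^2 = (2\pi)^q \sum_{m \in \Z^q} |\hat v(m)|^2,
\end{equation*}
while for the right-hand side I would invoke Remark \ref{rem:frac-Hs}, namely $\|(-\Delta^{s})v\|_{L^2(T^q)} = (2\pi)^{q/2} [v]_{H^{2s}(T^q)}$, together with the definition of the seminorm $[v]_{H^{2s}(T^q)}^2 = \sum_{m} |m|^{4s} |\hat v(m)|^2$, to obtain
\begin{equation*}
	\|(-\Delta^{s})v\|_{L^2(T^q)}^2 = (2\pi)^q \sum_{m \in \Z^q} |m|^{4s} |\hat v(m)|^2.
\end{equation*}
The common factor $(2\pi)^q$ cancels, so \eqref{eq:po} is equivalent to the frequency-domain inequality $\sum_{m} |\hat v(m)|^2 \leq \sum_{m} |m|^{4s} |\hat v(m)|^2$.

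The key step is then to exploit the zero-mean condition. Since $\int_{(0,2\pi)^q} v(x)\, dx = 0$ is equivalent to $\hat v(0) = 0$, both sums may be restricted to $m \in \Z^q \smallsetminus \{0\}$. For every such $m$ one has $|m| \geq 1$, and because $0 \leq s \leq 1$ forces $4s \geq 0$, it follows that $|m|^{4s} \geq 1$. Hence each summand obeys $|\hat v(m)|^2 \leq |m|^{4s} |\hat v(m)|^2$, and summing over $m \neq 0$ yields the claim.

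The only point that requires genuine care is the role of the zero-mean hypothesis: at $m = 0$ the weight $|m|^{4s}$ vanishes, so without $\hat v(0) = 0$ the termwise comparison would break down for the constant mode, and the inequality would be false. Once this single frequency is excluded, no real obstacle remains, and the result is a direct consequence of Parseval's identity and the elementary bound $|m|^{4s} \geq 1$ on nonzero integer frequencies.
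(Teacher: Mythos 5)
Your proof is correct and follows exactly the route the paper intends: the paper offers no written proof beyond the remark that the inequality follows from Parseval's theorem, and your argument—passing to Fourier coefficients, using $\hat v(0)=0$ to discard the zero mode, and comparing termwise via $|m|^{4s}\geq 1$ for $m\neq 0$—is precisely the standard filling-in of that claim.
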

\subsection{\texorpdfstring{Extremal points of $L^2$-balls of fractional-Laplacian-type operators}{Extremal points of L2-balls of fractional-Laplacian-type operators}}
In this paper we will consider $L^2$-norms of fractional-Laplacian-type operators. This section is devoted to characterize the extremal points of their $L^2$ balls.
Let $f  : \Z^q \rightarrow [0,\infty)$ be a function such that 
\begin{enumerate}
\item [i)] For every $m \neq 0$ it holds that $f(m) > 0$.
\end{enumerate} 
We consider the convex, positively one-homogeneous functional $J_f : L^{2}(T^q) \rightarrow [0,\infty)$ defined as 
\begin{equation}\label{eq:jayf}
J_f(v) := \sqrt{\sum_{m \in \Z^q} f(m) |\hat v(m)|^2}\,.
\end{equation}
We now characterize the extremal points of the unit ball of $J_f$ in $ L^{2}_\circ(T^q)$.
\begin{lemma}\label{lem:extfractionallaplacian}
Consider the set $B = \{v \in L^{2}_\circ(T^q) : J_f(v) \leq 1\}$. We have that 
\begin{equation}
\Ext(B) = \left\{v \in L^{2}_\circ(T^q): J_f(v) = 1\right\}\,.
\end{equation}
\end{lemma}

\begin{proof}
First note that $J_f(v) > 0$ whenever $v \neq 0$ thanks to Assumption $i)$ of $f$.
Setting
\begin{equation}
\mathscr{B} = \left\{v \in L^{2}_\circ(T^q): J_f(v) = 1\right\}\,,
\end{equation}
we want to prove that $\Ext(B) = \mathscr{B}$.
We first proceed to prove the inclusion $\mathscr{B} \subset \Ext(B)$. Given $v \in \mathscr{B}$ consider the convex combination 
\begin{equation}\label{eq:convex12}
v = \lambda v_1 + (1 - \lambda) v_2
\end{equation}
with $\lambda \in (0,1)$ and $v_1,v_2 \in B$.
Applying the functional $J_f$ to both sides of \eqref{eq:convex12}, using the convexity of $J_f$ and that $v_1,v_2 \in B$ we obtain that $\lambda J_f(v_1) + (1 - \lambda) J_f(v_2) = 1$ 
implying that $J_f(v_1) = J_f(v_2) = 1$. Therefore, from \eqref{eq:convex12} we get 
\begin{align*}
1 =  &\lambda^2\sum_{m \in \Z^q } f(m) |\hat v_1(m)|^2 + (1-\lambda )^2\sum_{m \in \Z^q} f(m) |\hat v_2(m)|^2   +2\lambda (1-\lambda )\Rea \sum_{m \in \Z^q} f(m)\overline{\hat v_1(m)} \hat v_2(m)
\end{align*}
implying, since $\lambda \neq 0$ and $\lambda \neq 1$, that 
\begin{equation*}
\Rea \sum_{m \in \Z^q } f(m)\overline{\hat v_1(m)} \hat v_2(m) = 1\,.
\end{equation*}
 Recalling that $J_f(v_1) = J_f(v_2) = 1$, a simple contradiction argument that uses the sharpness of Cauchy-Schwarz inequality shows that 
$\hat v_1(m) =  \hat v_2(m)$ for every $m \in \Z^q$. In particular, $v_1 = v_2 = v$
implying that $v$ is an extremal point for $B$.

Conversely, let us show that $\Ext(B) \subset \mathscr{B}$.
Suppose that such inclusion does not hold true. Then, there exists $v \in \Ext(B)$ such that $J_f(v) \neq 1$. In particular, since $v \in B$, there exists $\e > 0$ such that   $J_f(v) < 1 - \e$. Consider $w \in L^{2}_\circ(T^q) \setminus \{0\}$ such that $
J_f(w) \leq \frac{\e}{2}$.
Notice that such $w$ exists since $\domain\,J_f \neq \{0\}$ and $J_f$ is positively one-homogeneous. Consider then the convex combination
\begin{equation}
v = \frac{1}{2} (v + w) + \frac{1}{2}(v-w)\,.
\end{equation}
Since $w \neq 0$, we immediately have that $v+ w \neq v - w$. Moreover
\begin{equation}
J_f(v \pm w)  \leq J_f(v) + J_f(w)  < 1\,,
\end{equation}
showing that $v$ is not an extremal point.
\end{proof}
If $f$ is strictly positive, then we can characterize the extremal points of $J_f$ in $L^{2}(T^q)$ as follows.
\begin{lemma}\label{lem:extfractionallaplacian2}
Suppose that $f$ satisfies 
\begin{itemize}
\item [i')] For every $m \in \Z^q$ it holds that $f(m) > 0$.
\end{itemize}
Then given the set $\tilde B = \{v \in L^{2}(T^q) : J_f(v) \leq 1\}$ we have that 
\begin{equation}
\Ext(\tilde B) = \left\{v \in L^{2}(T^q) : J_f(v) = 1\right\}\,.
\end{equation}
\end{lemma}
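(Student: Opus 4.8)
The plan is to repeat the argument of \cref{lem:extfractionallaplacian} almost verbatim, the only substantive change being that the ambient space is now the full $L^2(T^q)$ rather than the zero-mean subspace $L^2_\circ(T^q)$. The single place where this distinction matters is the positive-definiteness of $J_f$: in \cref{lem:extfractionallaplacian} one uses that $J_f(v) > 0$ for $v \neq 0$, which there follows because every $v \in L^2_\circ(T^q)$ has $\hat v(0) = 0$ and all remaining frequencies are penalized. Under the strengthened assumption i'), we have $f(m) > 0$ for \emph{every} $m \in \Z^q$, so $J_f(v) = 0$ forces $\hat v(m) = 0$ for all $m$ and hence $v = 0$; thus $J_f$ is again strictly positive on $L^2(T^q) \setminus \{0\}$. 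Set $\mathscr{B} := \{v \in L^2(T^q) : J_f(v) = 1\}$; the claim to be shown is $\Ext(\tilde B) = \mathscr{B}$.

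For the inclusion $\mathscr{B} \subset \Ext(\tilde B)$, I would take $v \in \mathscr{B}$ and a convex decomposition $v = \lambda v_1 + (1-\lambda) v_2$ with $\lambda \in (0,1)$ and $v_1, v_2 \in \tilde B$. Applying $J_f$ and using convexity together with $J_f(v_1), J_f(v_2) \leq 1$ forces $J_f(v_1) = J_f(v_2) = 1$. Expanding $1 = J_f(v)^2$ in Fourier coefficients exactly as before yields $\Rea \sum_{m \in \Z^q} f(m) \overline{\hat v_1(m)} \hat v_2(m) = 1$, and the sharpness of the Cauchy--Schwarz inequality with respect to the inner product weighted by $f$ then gives $\hat v_1(m) = \hat v_2(m)$ for every $m \in \Z^q$. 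Here is precisely where i') is needed: since $f(0) > 0$ as well, the equality case pins down the zeroth mode too, so $v_1 = v_2 = v$ and $v$ is extremal.

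The reverse inclusion $\Ext(\tilde B) \subset \mathscr{B}$ transfers without any change. Assuming $v \in \Ext(\tilde B)$ with $J_f(v) < 1 - \e$ for some $\e > 0$, I would pick $w \in L^2(T^q) \setminus \{0\}$ with $J_f(w) \leq \e/2$ --- such $w$ exists by positive one-homogeneity since $\domain J_f \neq \{0\}$ --- and write $v = \tfrac12(v+w) + \tfrac12(v-w)$. Subadditivity gives $J_f(v \pm w) \leq J_f(v) + J_f(w) < 1$, so both points lie in $\tilde B$, and $w \neq 0$ makes them distinct, contradicting extremality. The whole argument is routine; the only point requiring attention --- and the reason a separate lemma is stated --- is verifying that i') upgrades the positive-definiteness of $J_f$ from $L^2_\circ(T^q)$ to all of $L^2(T^q)$, which is exactly what legitimizes the Cauchy--Schwarz step over the full frequency lattice.
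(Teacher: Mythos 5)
Your proposal is correct and is exactly the argument the paper intends: the paper omits this proof, stating only that it follows closely that of Lemma \ref{lem:extfractionallaplacian}, and your write-up carries out that adaptation, correctly identifying that assumption i') is needed only to make $J_f$ positive definite on all of $L^2(T^q)$ (including the zeroth Fourier mode) so that the equality case of the weighted Cauchy--Schwarz inequality pins down $v_1 = v_2$ on the full frequency lattice.
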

Since the proof follows closely the one of Lemma \ref{lem:extfractionallaplacian} we omit it.

\end{document}